\documentclass[twoside,11pt,titlepage]{amsart}

% all packages here
\usepackage{amsthm} % Theorem Formatting
\usepackage{amssymb}	% Math symbols such as \mathbb
\usepackage{amsmath} % AMS Math Package
\usepackage{graphicx} % Allows for eps images
\usepackage{multicol} % Allows for multiple columns
\usepackage[marginratio=1:1,height=584pt,width=360pt,tmargin=117pt]{geometry}
 % Sets margins and page size
\usepackage{hyperref}
\usepackage{pst-node}
\usepackage{tikz-cd}
\usepackage{tikz}
\usetikzlibrary{calc}
\usepackage[mathscr]{euscript}
\usepackage[numbers]{natbib}
\usepackage[toc,page]{appendix}
\usepackage{dsfont}
\usepackage{faktor}
\usepackage{multirow}
\usepackage{url,times,wasysym,geometry,indentfirst,rotating,tikz}

% math operators
\renewcommand{\Vec}{\text{Vec}}
\DeclareMathOperator{\vgo}{\text{Vec}_G}

\DeclareMathOperator{\Endo}{Endo}

\DeclareMathOperator{\Hom}{Hom}

\DeclareMathOperator{\Mod}{Mod}
\DeclareMathOperator{\Modc}{Modc}
\DeclareMathOperator{\Bimodc}{Bimodc}
\DeclareMathOperator{\Span}{Span}

\DeclareMathOperator{\VecZp}{\Vec_{\mathbb{Z}_p}}
\DeclareMathOperator{\Haag}{\text{Haag}}

%mathcal alphabet
\newcommand{\mcA}{\mathcal{A}}
\newcommand{\mcB}{\mathcal{B}}
\newcommand{\mcC}{\mathcal{C}}
\newcommand{\mcD}{\mathcal{D}}

\newcommand{\mcF}{\mathcal{F}}

\newcommand{\mcH}{\mathcal{H}}

\newcommand{\mcM}{\mathcal{M}}

\newcommand{\mcZ}{\mathcal{Z}}

%upper mathbb

\newcommand{\mbbC}{\mathbb{C}}

\newcommand{\mbbR}{\mathbb{R}}

\newcommand{\mbbZ}{\mathbb{Z}}

%%%ZW macros

\def\Aut#1{\text{Aut}_{\otimes}^{br}(#1)} 
\def\uAut#1{\underline{\text{Aut}_{\otimes}^{br}(#1)}}
\def\uuAut#1{\underline{\underline{\text{Aut}_{\otimes}^{br}(#1)}}}
\def\cross#1#2{{#1}_{#2}^{\times}}
\def\gauge#1#2#3{{#1}_{#2}^{\times,#3}}
\def\Pic#1{\text{Pic}(#1)}
\def\uPic#1{\underline{\text{Pic}(#1)}}
\def\uuPic#1{\underline{\underline{\text{Pic}(#1)}}}
\def\u#1{\underline{#1}}
\def\uu#1{\underline{\underline{#1}}}

\def\SU{\text{SU}}
\def\unit{\mathbf{1}}

\def\Fib{\text{Fib}}

\def\Fib{\text{Fib}}

%Shawn's commands

\def\scale{0.7}
\def\golden{\phi}
\def\Un{U}
\def\DFib{\text{DFib}}
\def\L{\text{L}}

%\renewcommand{\labelenumi}{(\alph{enumi})} % Use letters for enumerate
% \DeclareMathOperator{\Sample}{Sample}
 % rename builtin command \v{} to \vaccent{}
 % for vectors
 
% for vectors of Greek letters
 % for unit vector
 % for absolute value
 % for average
 % rename builtin command \d{} to \underdot{}
 % for derivatives
 % for double derivatives
 
% for partial derivatives
 
% for double partial derivatives
 % for thermodynamic partial derivatives
 % for Dirac bras
 % for Dirac kets
 % for Dirac brackets
 % for Dirac matrix elements
 % for gradient
 % rename builtin command \div to \divsymb
 % for divergence
 % for curl
\let\baraccent=\= % rename builtin command \= to \baraccent
\renewcommand{\=}[1]{\stackrel{#1}{=}} % for putting numbers above =
\renewcommand{\Vec}{\text{Vec}}
\setcounter{section}{0}

\title{{\LARGE\bf
\textsc{On Generalized Symmetries and Structure of Modular Categories
}}}
%\author{Shawn X. Cui, Modjtaba Shokrian Zini \& Zhenghan Wang}
\newcommand\Author{Shawn X. Cui, Modjtaba Shokrian Zini \& Zhenghan Wang}

% changes of in-built macros.
\makeatletter
\newcommand{\addresseshere}{%
  \enddoc@text\let\enddoc@text\relax
}
\makeatother
\makeatletter % Need for anything that contains an @ command 
\g@addto@macro{\endabstract}{\@setabstract}
\newcommand{\authorfootnotes}{\renewcommand\thefootnote{\@fnsymbol\c@footnote}}%
\makeatother
\makeatletter
\renewcommand{\maketitle} % Redefine maketitle to conserve space
{ \begingroup \vskip 10pt \begin{center} \large {\bf \@title}
	\vskip 10pt \large \@author \hskip 20pt \@date \end{center}
  \vskip 10pt \endgroup \setcounter{footnote}{0} }
\makeatother % End of region containing @ commands
\makeatletter
\let\Title\@title
\def\ps@mystyle{%
      \let\@oddfoot\@empty\let\@evenfoot\@empty
      \def\@evenhead{\makebox[0pt][l]{\thepage}\hfill\Author\hfill}%
      \def\@oddhead{\hfill\Title\hfill\makebox[0pt][l]{\thepage}}%
      \let\@mkboth\markboth}
\makeatother

% theorem styles

\newtheorem{prop}{Proposition}
\newtheorem{thm}{Theorem}[section]

\newtheorem{opq}[thm]{Open Question}

\newtheorem{conjecture}[thm]{Conjecture}
\theoremstyle{definition}
\newtheorem{definition}{Definition}
\theoremstyle{remark}
\newtheorem{remark}{Remark}

\begin{document}
\begin{center}
  \LARGE 
   \maketitle \par \bigskip

  \normalsize
  \authorfootnotes
  Shawn X. Cui \footnote{xingshan@stanford.edu; xingshan@vt.edu}\textsuperscript{,\hyperref[1.]{1},\hyperref[2.]{2}}, Modjtaba Shokrian Zini \footnote{shokrian@math.ucsb.edu}\textsuperscript{,\hyperref[3.]{3}}, Zhenghan Wang \footnote{zhenghwa@microsoft.com;
  zhenghwa@math.ucsb.edu}\textsuperscript{,\hyperref[4.]{4}} \par 
  \bigskip
\end{center}

\begin{center}
\begin{abstract}
Pursuing a generalization of group symmetries of modular categories to category symmetries in topological phases of matter, we study linear Hopf monads. The main goal is a generalization of extension and gauging group symmetries to category symmetries of modular categories, which include also categorical Hopf algebras as special cases.  As an application, we propose an analogue of the classification of finite simple groups to modular categories, where we define simple modular categories as the prime ones without any nontrivial normal algebras.   
\end{abstract}
\end{center}

\tableofcontents

\section{Introduction}\label{1}

Group symmetries are central in mathematics and physics.  But group symmetry alone is not adequate to capture phenomena such as symmetry fractionalization, symmetry defects, and gauging in topological phases of matter, where categorical group symmetries are used \cite{BBCW14,cui2016gauging}.  In this paper, we propose a further generalization from categorical groups to fusion categories using linear Hopf monads, and investigate the basic properties for such generalized symmetries.  The main goal is a generalization of extension and gauging group symmetries to category symmetries of modular categories, which include also categorical Hopf algebras as special cases.  As an application, we propose an analogue of the classification of finite simple groups to modular categories, where we define simple modular categories as the prime ones without any nontrivial normal algebras.

Exemplary examples of group symmetries in the mathematical formulation of quantum field theories include $\textrm{U}(1), \SU(2), \SU(3)$ in the standard model.  Quantum groups, loosely defined as Hopf algebras, appeared in the study of inverse scattering problems in mathematical physics and are instrumental in our understanding topological phases of matter, which lie outside the Landau paradigm of phases of matter based on group symmetries.   Topological phases of matter in two spatial dimensions are modeled by anyon models, mathematically unitary modular categories.  Finite group symmetries of anyon models have been extensively studied recently revealing a deep interplay between symmetry and topological order through symmetry fractionalization, defects, and gauging \cite{BBCW14,cui2016gauging}.  How to extend symmetries beyond finite groups arise as a natural problem.  Our generalization to linear Hopf monads as symmetries of anyon models and unitary fusion categories can be equivalently formulated as fusion category symmetries of modular categories.

A linear Hopf monad is a far-reaching generalization of a Hopf algebra \cite{bruguieres2007hopf,bruguieres2011hopf}, which is closely related to adjunctions of tensor functors.  A Hopf monad $T: \mcC \rightarrow \mcC$ of a fusion category $\mcC$ is essentially the same as a tensor functor $D: \mcC^T \rightarrow \mcC$, where $\mcC^T$, assuming $T$ is semisimple, is the \textit{fusion} category of Eilenberg-Moore category of $T$-modules. Two large classes of Hopf monads are those from group actions $G$ on $\mcC$ and categorical Hopf algebras $\mcH$ in $\mcC$.  

One of the motivations for this work is the possibility of generalizing the doubled Haagerup category by gauging some Hopf monad symmetries on abelian modular categories\footnote{The third author would like to thank C. Delaney, T. Gannon, and J. Tener for collaborating on this project.}. It is unclear at the moment if this approach is a viable one.

The contents of the paper are as follows.  In \hyperref[3]{section 3}, we first recall the definition of Hopf monad \cite{bruguieres2011hopf} that does not follow the standard algebra-coalgebra one \cite{bruguieres2007hopf} using the so-called fusion functors.  In \hyperref[4]{section 4}, we skeletalize the categorical definition of Hopf monads, which are useful for concrete calculation. In \hyperref[5]{section 5}, we attempt to characterize Hopf monads on $\VecZp$.  In \hyperref[6]{section 6}, we outline the theory of extension, gauging, and generalized symmetry defects.  Finally, we propose a structure theory for modular categories.

\section{Background material and a list of notations}\label{2}

Fusion categories and braided fusion categories in this paper are assumed to be unitary over $\mbbC$. Therefore, all fusion categories are spherical. All Hopf monads (HM) are $\mbbC$-linear and semi-simple ensuring their Eilenberg-Moore module categories to be fusion \cite[Remark 6.2]{bruguieres2007hopf}. We follow the book \cite{etingof2016tensor} for the various notions of categories such as abelian, monoidal, braided, unitary, and fusion, and similarly for notions of functors with the additional constraint that all functors considered will be linear. Also, there will be an exception: A monoidal functor $F$ has maps
$$F_2(X,Y): F(X) \otimes F(Y) \to F(X \otimes Y),\ \ 
F_0: \unit \to F(\unit)$$
which are \textit{not} necessarily isomorphisms. If they are, the functor will be called \textit{strong} monoidal. As a result, a tensor functor is (linear) strong monoidal functor which is exact and faithful. We shall also assume well-known facts about module categories (see e.g. \cite[Chap. 7]{etingof2016tensor} or \cite{ostrik2003module}).

\textit{Notations:}
\begin{enumerate}

\item $\mcC, \mcD, \mcF:$ fusion categories

\item $\mcA,\mcH:$ algebras, usually condensable and Hopf, respectively

\item $\mcB:$ unitary braided fusion category

\item $\mcB_\mcA$: the condensed fusion category from condensing an algebra $\mcA$ of $\mcB$

\item $\Endo(\mcC):$ the monoidal category of endo-functors of $\mcC$

\item $\Modc(\mcC):$ the category of module categories of $\mcC$

\item $\Bimodc(\mcC):$ the linear monoidal category of bimodule categories of $\mcC$

\item $\Modc(\mcB):$ the linear monoidal category of module categories of $\mcB$ regarded as bimodule categories using the braiding, not the same as $\Bimodc(\mcB)$

\item $\Aut{\mcB}$, $\uAut{\mcB}$, $\uuAut{\mcB}$: The 1-group, 2-group, 3-group of braided equivalences of $\mcB$.

\item Similarly, $\Pic{\mcB}, \uPic{\mcB}, \uuPic{\mcB}$: The 1-group, 2-group, 3-group of invertible module categories of $\mcB$.

\item $\mcC^T$: the fusion category of $T$-modules of a Hopf monad $T\in \Endo(\mcC)$, with objects $(M,r), M \in \mcC,$ and $r:T(M) \to M$ the $T$-action.

\item $\cross{\mcB}{T}$: $T$-crossed braided category extended over $\mcB$

\item $\gauge{\mcB}{T}{T}$: the gauging of $\mcB$

\item $\u{\rho}$: categorical group homomorphism

\item $\uu{\rho}$: categorical 2-group homomorphism

\item $\mcZ(\mcC)$ and $\mathcal{D}(\mcC)$ are the Drinfeld center of $\mcC$

\item $\vgo$ is the fusion category of $G$-graded vector spaces, and $\mathcal{D}(G)$ also denotes the Drinfeld center $\mcZ(\vgo)$ of $\vgo$. 

\end{enumerate}

\section{Linear Hopf monads}\label{3}

In this section, we define Hopf monad following Remark 2.7 in \cite{bruguieres2011hopf}.  We collect the many different occurrences of HM in the literature and focus on condensation.  

\subsection{Hopf monads}\label{3.1}

\begin{definition}\label{dfn1}
A left Hopf monad on a fusion category $\mcC$ with associator $a_{X,Y,Z}$ is a quadruple $(T,H,\eta, T_0)$  where $T$ is an endofunctor of $\mcC$, $H = \{H_{X,Y}\ | \ X,Y \in \mcC\}$ is a natural isomorphism from $T\left(- \otimes T(-)\right)$ to $T(-) \otimes T(-)$, $\eta = \{\eta_X\ | \ X \in \mcC\}$ is a natural transformation from $Id_{\mcC}$ to $T$, and $T_0\colon T(\unit) \longrightarrow \unit$ is a morphism, such that the following equations hold.
\begin{itemize}
    \item Compatibility condition between $H$ and $\eta,\, T_0$:
     \begin{equation}
     \label{equ:HM_triangle_axioms}
     \begin{split}
        H_{X,Y} \eta_{X \otimes T(Y)} = \eta_{X} \otimes id_{T(Y)}, \qquad & T_{0} \eta_{\unit} = id_{\unit},\\
        (id_{T(X)} \otimes T_0)H_{X,\unit} = T(id_{X} \otimes T_0),    \qquad & (T_0 \otimes id_{T(X)} )H_{\unit, X}T(\eta_{X}) = id_{T(X)}.
     \end{split}
    \end{equation}
    \item Heptagon Equation\footnote{If $\mcC$ is strict, then two of the maps appearing in the Heptagon Equation become the identity map, and hence the equation is reduced to a \lq Pentagon' Equation, which was first introduced in \cite{bruguieres2011hopf} and is not to be confused with the Pentagon Equation arising the associator of a monoidal category or the heptagon equations in the $G$-crossed braiding.},
    $$(id_{T(X)} \otimes H_{Y,Z}) H_{X, Y \otimes Z}=$$
    \begin{align}
    \label{equ:HM_heptagon_axiom}
       a_{T(X),T(Y),T(Z)} (H_{X,Y} \otimes T(Z)) H_{X \otimes T(Y), Z} a_{X,T(Y),T(Z)}^{-1} T(id_X \otimes H_{Y,Z})
    \end{align}
\end{itemize}
\end{definition}
From the fusion operator one can derive the multiplication $\mu : T^2 \to T$ as follows:
$$(T_0 \otimes id_{T(X)})H_{\unit,X}=\mu_X.$$
Further the comonoidal structure of $T$ is given by
$$T_2(X,Y)=H_{X,Y}T(id_X \otimes \eta_Y): T(X\otimes Y) \to T(X) \otimes T(Y).$$
We shall call the operator $H$ the left fusion operator $H^l_{X,Y}$, and the \textit{right} fusion operator to be $H^r_{X,Y}$ which operates on $T(T(-)\otimes -)$ instead. They satisfy similar equations as illustrated in \cite{bruguieres2011hopf}. A right Hopf monad has a definition similar to above using $H^r_{X,Y}$. We shall always assume a HM $T$ to be a left and right HM.
\begin{definition}\label{dfn2}
A module $(M,r)$ of $T$ is an object $M$ of $\mcC$ with $r:T(M)\to M$ a morphism of $\mcC$ such that
$$rT(r)=r\mu_M \ \ \ , \ \ \ r\eta_M=id_M $$
The category of $T$-modules is denoted by $\mcC^T$ and its morphisms are $\mcC$ morphisms preserving the $T$-module structure.
\end{definition} 
The HMs we shall work with are always assumed to be semisimple linear and as pointed out in introduction, for such HMs, the module category is fusion \cite{bruguieres2007hopf}.

Two important functors related to $T$ are the forgetful functor $F:\mcC^T \to \mcC$ which forgets the module structure, and its left adjoint $G : M \to (T(M),\mu_M)$ which gives $T=FG$. This illustrates why every Hopf monad is an adjunction \cite[Remark 3.16]{bruguieres2007hopf}.

A definition that will be needed to tie the condensable algebra topic to HMs is the following:
\begin{definition}\label{dfn3}
A Hopf comonad $T:\mcC \to \mcC$ is a bicomonad, i.e. a monoidal comonad with a natural morphism called counit $\epsilon: T \to 1_\mcC$, a comultiplication $\Delta: T \to T^2$, a monoidal morphism $T_2: T(x)\otimes T(Y) \to T(X \otimes Y)$ and a unit morphism of objects $T_0: \unit \to T(\unit)$, plus a left and right fusion operator satisfying the same axioms of HMs.

Just like HMs, our Hopf comonads will always be assumed to be semi-simple linear.
\end{definition}
One way to picture a Hopf comonad is to imagine all structural morphisms and axioms of HMs with arrows reversed. A Hopf comonad can also be recovered by taking the \textit{right} adjoint of a tensor functor. Further, the \textit{co}modules of a Hopf comonad, denoted by $\mcC^{co-T}$, form a fusion category. 

It can be easily observed that HMs like $T_\mcH(-)=\mcH \otimes -$ are also Hopf comonads. Therefore, it is useful to have the following definition:
\begin{definition}\label{dfn4}
A HM $T$ that is also a Hopf comonad is called a self-dual HM.
\end{definition}
For a self-dual HM $\mcC^T$ and $\mcC^{co-T}$ will denote the module and comodule category, respectively.

\subsection{Examples of HMs}\label{3.2}
\subsubsection{HMs from group actions}\label{3.2.1}
Consider a $G-$symmetry of a finite group $G$ on $\mcC$ defined by a strong monoidal functor $T_{-}: \underline{G} \to \underline{\text{Aut}_\otimes(\mcC)}$. One can form the operator $T_G=\oplus_{g \in G} T_g$ where $T_g$ are the symmetries. This is indeed a HM \cite[Thm. 4.21]{bruguieres2011exact}. To see why, the counit $T_0$ and unit $\eta$ and the fusion operator $H$ are the analog of the same operators which shows that $\mathbb{C}[G]$ is a Hopf algebra. To define the fusion operator we can use our knowledge of the coproduct 
$$T_2 : \oplus T_g(X\otimes Y) \to \oplus T_g(X)\otimes T_g(Y),$$ which is derived from the the fact that $T_g$ are (strong monoidal) automorphisms, and the definition of the fusion operator in \cite[Intro.]{bruguieres2011hopf} as
$$H^l_{X,Y}=(id_{T(X)}\otimes \mu_Y)T_2(X,T(Y))$$
and similarly for the right $H^r_{X,Y}$. Straightforward calculation shows that they are inverse to each other. This proves that $T_G$ is a HM.

\subsubsection{HMs from categorical Hopf algebras}\label{3.2.2}
The \textit{action} of a Hopf algebra was used as a venue to generalize its notion to all monoidal categories. This provides the most basic general class of HMs \cite{bruguieres2007hopf}; a Hopf algebra $\mcH$ inside $\mcC$ gives a HM $T_\mcH(-)=\mcH \otimes -$. These are Hopf algebra symmetries. Notice Hopf algebra needs a braiding for its definition, therefore a braiding structure for the relevant category is always assumed.

\subsubsection{HMs from tensor functors}\label{3.2.3}
Tensor functors are extra structures on functors between fusion categories. A tensor functor $F: \mcD \to \mcC$ between fusion categories has adjoints (see e.g. \cite[1.3]{bruguieres2011exact}) and its left adjoint $G$ provides a HM $T=FG$ and an equivalence $\mcD \cong \mcC^T$. Conversely, any HM $T$ has a (forgetful) tensor functor $U:\mcC^T \to \mcC$ with the left adjoint being $X \to (T(X),\mu_X)$. 

\subsubsection{HMs from condensation}\label{3.2.4}
We first define a condensable algebra. To do so, recall an algebra $\mcA$ of a braided fusion category $\mcB$ with multiplication $m:\mcA \otimes \mcA \to \mcA$ is
\begin{itemize}
    \item commutative if $m \circ c_{\mcA,\mcA} =m $ where $c_{\mcA,\mcA}$ is the braiding,
    \item connected if $\Hom(\unit,\mcA)\cong \mbbC$,
    \item separable if $m$ admits a splitting $\zeta: \mcA \to \mcA \otimes \mcA$, a morphism of $(\mcA,\mcA)$-bimodules.
\end{itemize} 
\begin{definition}\label{dfn5}
An algebra $\mcA$ of a braided fusion category $\mcB$ is condensable if it is a connected etale (commutative and separable) algebra.

A condensable algebra $\mcA$ (physically) will also be referred to as normal (mathematically).
\end{definition}

For the HM $T_\mcH(-)=\mcH \otimes -$ given by a Hopf algebra, the forgetful functor is the tensor functor and we consider its left adjoint. The case of condensable algebras (which are not Hopf algebras in general) is opposite and we refer to \cite[Def. 3.11 and 3.12]{cong2016topological} for the details. The condensation functor $D_\mcA: \mcB \rightarrow \mcB_\mcA$ is the tensor functor, and its \textit{right} adjoint is the forgetful functor $E_\mcA: \mcB_\mcA \rightarrow \mcB$. The composition $T_\mcA=D_\mcA \circ E_\mcA: \mcB_\mcA\rightarrow \mcB_\mcA$ is a Hopf \textit{co}monad and the equivalence $\mcB \cong \mcB_\mcA^{co-T_\mcA}$ holds.

A simple object $X$ of $\mcB_\mcA$ is deconfined if all the simple objects of $E_\mcA(X)$ all have the same topological twist.  Otherwise it is confined and called a defect.

In order to derive the maps $D_\mcA$ and $E_\mcA$, the Frobenius reciprocity property is used:
\begin{align}
\Hom_{\mcB}(X,\mcA \otimes Y)=\Hom_{\mcB_\mcA}(X,Y).
\end{align}
Notice that objects of the two categories are actually the same although the simple objects are of course not. We should note that the above version of Frobenius reciprocity is true for the cases considered in this paper. Otherwise, instead of $\mcB_\mcA$ there should be $\widetilde{\mcB_\mcA}$ which is a pre-quotient category which needs to go through an idempotent completion process to give us $\mcB_\mcA$ (see \cite[Def. 3.11 and 3.12]{cong2016topological}). When there are no non-trivial splitting idempotents in $\widetilde{\mcB_\mcA}$, then $\widetilde{\mcB_\mcA}=\mcB_\mcA$. The Frobenius reciprocity provides enough constraints to derive completely the maps $D_\mcA,E_\mcA$ for the condensation examples in \hyperref[5.2]{5.2}.

\subsection{Double of HMs}\label{3.3}

The double of $T$ is defined using the coend
$$Z_T(x)= \bigoplus_i{}^*T(x_i)\otimes x\otimes x_i$$
as $D_T=Z_T \circ T$, where the summation is finite as we deal with UFCs (\cite[9.2]{bruguieres2007hopf}). The important example is $T=1_\mathcal{C}$ which gives the double of $\mathcal{C}$ , and in general (\cite[9.2]{bruguieres2007hopf}):
$$\mcC^{D_T}\cong\mathcal{Z}(\mcC^T),$$
as braided categories. This simple example also shows the power of Hopf monads, where the most simple HM, the identity map, can give rise to the double construction.

\subsection{Equivariantization of HM}\label{3.4}

The equivariantization $\mcC^G$ of the group action $G$ on $\mcC$ is known to be equivalent to $\mcC^{T_G}$ \cite[Thm. 4.21]{bruguieres2011exact}. Therefore, the natural generalization of equivariantization is the concept of $T$-modules. This also makes sense as these notions are supposed to categorify the notion of fixed point of an action. Therefore, the last step in the gauging process is the process of taking the $T-$modules.

\section{Skeletal definition of Hopf monads}\label{4}

In mathematics, many notions such as a fusion category are equivalence classes.  It is often convenient to work with different representatives of the same equivalence class.  In the case of a fusion category, two extreme convenient representatives are: a strict version with many objects, and a skeletal version with the least number of objects.  The strict version is convenient for general discussions and pictorial calculus, while the skeletal version is easier for concrete calculations.  In this section, we provide the skeletal definition of a HM, presented  as solutions of polynomial equations. As a warm-up, we start with the skeletal definition of tensor functors. 

\subsection{Diagrammatic Notations}\label{4.1}
Let $\mcC$ be a fusion category and  $\L(\mcC) = \{a,b,c, \cdots\}$ be a complete set of representatives, i.e., a set that contains a representative for each isomorphism class of simple objects of  $\mcC$. Let $N_{ab}^c$ be the fusion coefficients, 
\begin{align*}
    a \otimes b \simeq \oplus_{c \in \L(\mcC)} N_{ab}^c\, c.
\end{align*} 
For simplicity, we assume $\mcC$ to be multiplicity free, that is, $N_{ab}^c = 0,1$. A triple $(a,b,c)$ is called admissible if $N_{ab}^c = 1$. 

We introduce a diagrammatic way to represent the tensor product functor, an extra functor $T$, and their Cartesian product/compositions. We first assume $T$ to be an endofunctor of $\mcC$. Then $T$ is represented by an integer-valued matrix $(T_{ab})$:
\begin{align*}
    T(b) =  \oplus_{a \in \L(\mcC)} T_{ab}\, a, \qquad b \in \L(\mcC).
\end{align*}
Consider binary forests, i.e., a disjoint union of binary trees, with each edge possibly decorated with some number of dots. Every dot on the edge splits that edge into two. (If there are $n$ dots on an edge, then that edge is split into $n+1$ edges.) The forests have distinguished roots and  are aligned so that they grow in the upward direction. Thus roots appear at the bottom and leaves appear at the top.  Let $\mathcal{G}$ be a forest as described above. If $\mathcal{G}$ has $n$ leaves and $m$ roots, then it is interpreted as a functor $\mcC^n \to \mcC^m$ according to the following rules.
\begin{itemize}
    \item A \lq$Y$'-shape tree is interpreted as the tensor product functor $\otimes \colon \mcC^2 \to \mcC$; a verticle segment as the identity functor $Id\colon \mcC \to \mcC$; a verticle segment with a dot in it as $T\colon \mcC \to \mcC$.
    \item Horizontal juxtaposition of two forests corresponds to the Cartesian product of the respective functors. 
    \item Vertical concatenation of two forests corresponds to the composition of the respective functors. Explicitly, if the roots of $\mathcal{G}_1$ coincide with the leaves of $\mathcal{G}_2$, then the concatenated forest $\mathcal{G}_1 \# \mathcal{G}_2$ is interpreted as the composition of the functor corresponding to $\mathcal{G}_1$ and that corresponding to $\mathcal{G}_2$.  
\end{itemize}
See Figure \ref{fig:T_graph_examples} for examples of forests and their interpretations as functors. From now on, we will {only} study trees and we will use the same symbol for a tree and the functor it represents.

\begin{figure}
\centering
\begin{tikzpicture}[thick]
\begin{scope}
\draw (0,0) -- (-1,1);
\draw (0,0) -- (1,1);
\draw (0,0) -- (0,-1);

\draw (0,-1.5) node{I};
\end{scope}

\begin{scope}[xshift = 3cm]
\draw (0,1) -- (0,-1);
\draw (0,-1.5) node{II};
\end{scope}

\begin{scope}[xshift = 5cm]
\draw (0,1) -- (0,-1);
\fill (0,0) circle[radius = 2pt];
\draw (0,-1.5) node{III};
\end{scope}

\begin{scope}[xshift = 8cm]
\draw (0,0) -- (-1,1);
\draw (0,0) -- (1,1);
\draw (0,0) -- (0,-1);

\fill (0.5,0.5) circle[radius = 2pt];
\fill (0,-0.5) circle[radius = 2pt];
\draw (0,-1.5) node{IV};
\end{scope}

\begin{scope}[xshift = 12cm]
\draw (0,0) -- (-1,1);
\draw (0,0) -- (1,1);
\draw (0,0) -- (0,-1);

\fill (0.5,0.5) circle[radius = 2pt];
\fill (-0.5,0.5) circle[radius = 2pt];
\draw (0,-1.5) node{V};
\end{scope}

\end{tikzpicture}
\caption{(I) $\otimes$; (II) $Id_{\mcC}$; (III) $T$; (IV) $T \circ \otimes \circ (Id \times T)$; (V) $\otimes \circ (T \times T )$. }\label{fig:T_graph_examples}
\end{figure}
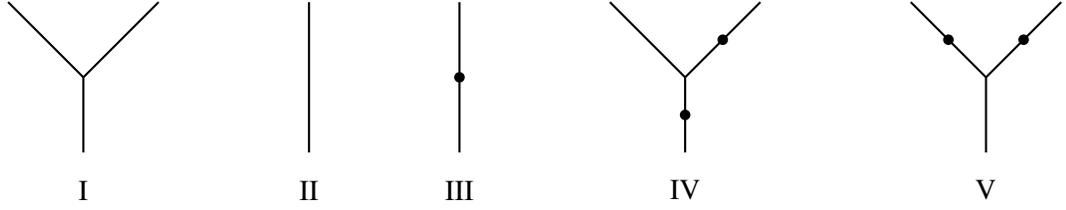

Let $\mathcal{G}$ be a binary tree with $n$ leaves representing a functor (also denoted by $\mathcal{G}$) from $\mcC^n$ to $\mcC$. We label each leaf $l_i$ with a simple object $a_i \in \L(\mcC)$. Then $\mathcal{G}(a_1, \cdots, a_n)$ is an object in $\mcC$ which may contain multiple copies of each simple object in $\L(\mcC)$. To specify a particular copy, we further label all other edges (including the one attached to the root) with simple objects in $\L(\mcC)$ and label all dots with positive integers so that the following two conditions hold:
\begin{enumerate}
    \item at each trivalent vertex, if the three edges (pointing towards to the northwestern direction, northeastern direction, southern direction) are respectively labelled by $a,b,c$, then $(a,b,c)$ is admissible.
    \item at each dot, if the edge above (respectively, below) the dot is labelled by $b$ (respectively, $a$) and if the dot is labelled by $\alpha$, then $1 \leq \alpha \leq T_{ab}$.
\end{enumerate}
See Figure \ref{fig:T_graph_labeling} for an illustration of labels of trees.
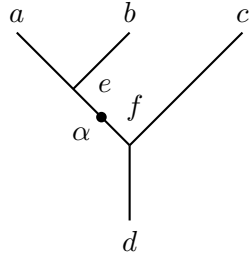
\begin{figure}
\centering
\begin{tikzpicture}[thick]
\begin{scope}[xshift = 0cm]
\draw (0,0) -- (-1.5,1.5)node[above]{$a$}node[pos = 0.25, anchor = north east]{$\alpha$}node[pos = 0.375, anchor = south west]{$e$}node[pos = 0.125, anchor = south west]{$f$};
\draw (0,0) -- (1.5,1.5)node[above]{$c$};
\draw (0,0) -- (0,-1)node[below]{$d$};
\draw (0,1.5)node[above]{$b$} -- (-0.75, 0.75);

\fill (-0.375,0.375) circle[radius = 2pt];
\end{scope}

\end{tikzpicture}
\caption{A labelling of a tree}\label{fig:T_graph_labeling}
\end{figure}

If the label for the edge attached to the root is $b$, then a labeling of $\mathcal{G}$ determines a specific copy of $b$ inside $\mathcal{G}(a_1,\cdots, a_n)$, or equivalently, a basis element in the space $\Hom\left(b,\mathcal{G}(a_1,\cdots, a_n)\right)$. By ranging all labels of internal edges, we enumerate all copies of $b$, or equivalently, a basis in $\Hom\left(b,\mathcal{G}(a_1,\cdots, a_n)\right)$. We will use these two interpretations interchangeably. 
\begin{remark}\label{rmk1}
Actually, the basis element in $\Hom\left(b,\mathcal{G}(a_1,\cdots, a_n)\right)$ corresponding to a labelling can be seen directly from the labelled tree. To achieve this, for each admissible $(a,b,c)$, fix a morphism $B_{c}^{ab} \in \Hom(c, a \otimes b)$ and for each $a,b \in \L(mcC)$, $1 \leq \alpha \leq T_{ab}$, fix an embedding $I_{a}^b(\alpha)\colon a \to T(b)$ so that the image of $I_{a}^b(\alpha)$ is the $\alpha$-th copy of $a$ inside $T(b)$.  Then, divide the tree into several layers such that within each layer the tree is a horizontal juxtaposition of at most three types of graphs: \lq$Y$'-shape graphs, verticle lines, and verticle lines with a dot. Each \lq$Y$'-shape graph with labels $(a,b,c)$ is interpreted as $B_{c}^{ab}$; each verticle line with a label $a$ is interpreted as $id_a \in \Hom(a,a)$; each verticle line with a dot whose labels are $(a,b;\alpha)$ with $b$ being the label of the edge above the dot is interpreted as $I_{a}^{b}(\alpha)$. Now, for the $i$-th layer, assume the morphisms we obtained by the above rules are $f_{i1}, \cdots, f_{in}$ and assume the part of the tree below the $i$-th layer represents the functor $F_i$, then the $i$-th layer is assigned the morphism $F_i(f_{i1}, \cdots, f_{in})$. Lastly, the morphism corresponding to the whole tree is obtained as the compositions of the morphisms assigned to each layer, from bottom to top. For instance, the labelled tree in Figure  \ref{fig:T_graph_labeling} represents the morphism:
\begin{equation}
\begin{tikzcd}
d \arrow[r, "B_{d}^{fc}"] & f \otimes c \arrow[rr, "I_f^e(\alpha) \otimes id_{c}"] && Te \otimes c \arrow[rr, "T(B_{e}^{ab}) \otimes id_c"] && T(a \otimes b) \otimes c.
\end{tikzcd}
\end{equation}
\end{remark}

To summarize, a binary tree $\mathcal{G}$ represents the functor $\mathcal{G}$; a binary tree with leaves labelled by $a_i$ and root labelled by $b$ represents the space $\Hom\left(b,\mathcal{G}(a_1,\cdots, a_n)\right)$; a binary tree with labels on each edge represents a particular copy of $b$ in  $\mathcal{G}(a_1,\cdots, a_n)$ or a particular basis element in  $\Hom\left(b,\mathcal{G}(a_1,\cdots, a_n)\right)$.

Let $\mathcal{G}_1, \mathcal{G}_2$ be two binary trees with the same labels $\{a_i\}$ on  leaves and $b$ on their root. If $\Phi$ is  a natural transformation from $\mathcal{G}_1$ to $\mathcal{G}_2$, then with these labels $\Phi$ can be written as a matrix $(\Phi_{b}^{\{a_i\}})_{\{c_i\},\{d_i\}}$, where $\{c_i\}$ ranges over all labels of internal edges of $\mathcal{G}_2$ and $\{d_i\}$ ranges those of $\mathcal{G}_1$. This is how we will represent natural transformations in the skeletal definition of tensor functors and Hopf monads below.   

As an example, the associator isomorphism for the tensor product and its matrix elements, the so called $F$-matrix, are represented as in Figure \ref{fig:T_graph_associator}. 
\begin{figure}
\centering
\begin{tikzpicture}[thick]

\begin{scope}[xshift = 0cm]
\draw (0,0) -- (-1,1);
\draw (0,0) -- (1,1);
\draw (0,0) -- (0,-1);
\draw (0,1) -- (-0.5, 0.5);

\draw (2,0) node{$\overset{\simeq}{\longrightarrow}$};

\begin{scope}[xshift = 4cm]
\draw (0,0) -- (-1,1);
\draw (0,0) -- (1,1);
\draw (0,0) -- (0,-1);
\draw (0,1) -- (0.5, 0.5);
\end{scope}
\end{scope}

\begin{scope}[xshift = 8cm]
\draw (0,0) -- (-1,1)node[above]{$a$}node[pos = 0.25, anchor = north east]{$f$};
\draw (0,0) -- (1,1)node[above]{$c$};
\draw (0,0) -- (0,-1)node[below]{$d$};
\draw (0,1)node[above]{$b$} -- (-0.5, 0.5);

\draw (2,0) node{$\overset{F^{abc}_{d;ef}}{\longrightarrow}$};

\begin{scope}[xshift = 4cm]
\draw (0,0) -- (-1,1)node[above]{$a$};
\draw (0,0) -- (1,1)node[above]{$c$}node[pos = 0.25, anchor = north west]{$e$};
\draw (0,0) -- (0,-1)node[below]{$d$};
\draw (0,1)node[above]{$b$} -- (0.5, 0.5);
\end{scope}
\end{scope}

\end{tikzpicture}
\caption{(Left) the associator natural isomorphism; (Right) the matrix elements of the associator natural isomorphism in the basis given by labels of internal edges.}\label{fig:T_graph_associator}
\end{figure}
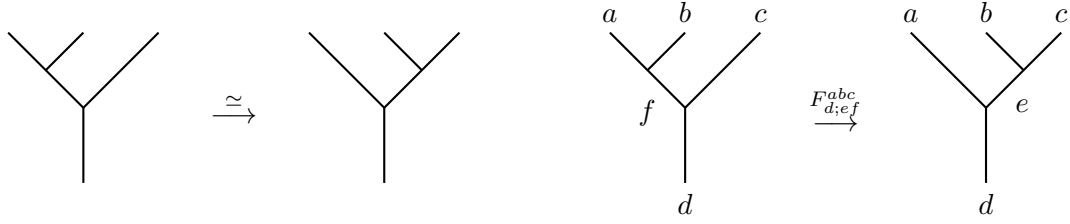

By considering a more restricted class of binary trees, we can relax the condition of $T$ to also allow $T$ to be a functor from $\mcC$ to another fusion category $\mcD$. Basically, a dot is still interpreted as $T$ and a \lq$Y$'-shape graph is interpreted either as the tensor product in $\mcC$ or as that in $\mcD$ depending on the specific configuration. Labels of edges will also be simple objects from either $\L(\mcC)$ or $\L(\mcD)$. The horizontal juxtaposition/vertical concatenation rules remain the same as before. We will not give an explicit characterization for such class of trees since all such trees we will be using in next subsection are simple enough so that the readers can easily verify that those  allow $T$ to take a target different from $\mcC$.

\subsection{Tensor Functors}\label{4.2}
Since tensor functor is a fundamental notion in our theory, we provide a skeletal definition in this subsection. We will use diagrammatic notations introduced in the previous subsection.

Let $\mcC $ and $\mcD$ be multiplicity-free fusion categories and without loss of generality, assume that the left/right unit isomorphisms $\unit \otimes X \simeq X \simeq X \otimes \unit$ in $\mcC$ and $\mcD$ are all the identity maps. We will use English letters to denote simple objects and Greek letters to denote integers.

Recall that a tensor functor $(T,T_2,T_0): \mcC \to \mcD$ is a linear strong monoidal functor that is faithful and exact. Such a functor determines, and is determined by, the data $(T_{ab}, T^{ab}_{c; (f\gamma),(d\alpha;e\beta)})$ satisfying Equations \ref{equ:T_matrix_size}, \ref{equ:T_unit}, \ref{equ:T_matrix_element}, and \ref{equ:T_unit_element}. 

For each $b \in \L(\mcC)$,
\begin{align}
\label{equ:T_obj}
    T(b)&= \oplus_{a \in \L(\mcD)} T_{ab} \, a, \quad T_{ab} \in \mathbb{N}.
\end{align}
For each $a,b \in \L(\mcC)$, there exists a natural isomorphism,
\begin{align}
    T_2(a,b): T(a) \otimes T(b) \quad \overset{\simeq}{\longrightarrow} \quad T(a \otimes b).
\end{align}
The isomorphism $T_2(a,b)$ determines a family of invertible matrices $\{T^{ab}_{c}\,\colon\, a,b \in \L(\mcC), c \in \L(\mcD) \}$, where the matrix elements of $T_{c}^{ab}$ are given by
\begin{align}
    \{T^{ab}_{c; (f\gamma),(d\alpha;e\beta)}\,\colon \, f \in \L(\mcC), d,e \in \L(\mcD), 1 \leq \alpha \leq T_{da},1 \leq \beta \leq T_{eb},1 \leq \gamma \leq T_{cf} \},
\end{align}
or diagrammatically shown in Figure \ref{fig:T_2_matrix_elements}. In particular, $T^{ab}_{c}$ being an isomorphism implies that the number of rows and the number of columns coincide,
\begin{align}
\label{equ:T_matrix_size}
    \sum_{d,e} T_{da}T_{eb}N_{de}^c &= \sum_{f} N_{ab}^f T_{cf}, \quad, \forall a,b \in \L(\mcC), c \in \L(\mcD).
\end{align}

\begin{figure}
\centering
\begin{tikzpicture}[thick]

\begin{scope}[xshift = 0cm]
\draw (0,0) -- (-1,1)node[above]{$a$}node[pos = .5, anchor = south west]{$\alpha$} node[pos = .25, anchor = north east]{$d$};
\draw (0,0) -- (1,1)node[above]{$b$}node[pos = .5, anchor = south east]{$\beta$}node[pos = .25, anchor = north west]{$e$};
\draw (0,0) -- (0,-1)node[below]{$c$};

\fill (-0.5,0.5) circle[radius = 2pt];
\fill (0.5,0.5) circle[radius = 2pt];
\end{scope}

\draw[->] (2,0) -- (4,0)node[pos = .5, above]{$T^{ab}_{c;(f\gamma),(d\alpha;e\beta)}$};

\begin{scope}[xshift = 6cm]
\draw (0,0) -- (-1,1)node[above]{$a$};
\draw (0,0) -- (1,1)node[above]{$b$};
\draw (0,0) -- (0,-1)node[below]{$c$}node[pos = .5, anchor = west]{$\gamma$}node[pos = .25, anchor = east]{$f$};

\fill (0,-0.5) circle[radius = 2pt];
\end{scope}

\end{tikzpicture}
\caption{The matrix elements of $T^{ab}_{c}$.}\label{fig:T_2_matrix_elements}
\end{figure}

Without loss of generality, We assume $T_0: \unit \rightarrow T(\unit)$ is the identity map. Thus,
\begin{align}
\label{equ:T_unit}
   T_{a \unit} = \delta_{a, \unit}. 
\end{align}

The axioms $T_2$ needs to satisfy are shown in Figures \ref{fig:T_2_hexagon}, \ref{fig:T_2_triangle1}, and \ref{fig:T_2_triangle2}, where we omitted labels of internal edges. Correspondingly, we obtain the following equations.

\begin{align}
\label{equ:T_matrix_element}
   \sum_{l,\lambda} T^{ab}_{h; (i\omega),(e \alpha;f \beta)} T^{ic}_{d; (j \theta),(h \omega; g \gamma)} F^{abc}_{j;ki}= 
   \sum_{i, \omega}F^{efg}_{d;lh} T^{bc}_{l;(k \lambda),(f \beta; g \gamma)} T^{ak}_{d;(j \theta),(e \alpha;l \lambda)},\nonumber \\
 \forall a,b,c \in \L(\mcC), \ d,e,f,g,h,j,k \in \L(\mcD),\\
  1\leq \alpha \leq T_{ea},  1 \leq \beta \leq T_{fb}, 1 \leq  \gamma \leq T_{gc}, 1 \leq \theta \leq T_{dj}. \nonumber
\end{align}

\begin{align}
\label{equ:T_unit_element}
    T^{\unit a}_{b; (a \beta),(\unit 1;b \alpha)} = T^{a \unit}_{b; (a \beta),(b \alpha;\unit 1)} = \delta_{\alpha, \beta}. \nonumber\\
    \forall a \in \L(\mcC), b \in \L(\mcD), 1 \leq \alpha \leq T_{ba}.
\end{align}
Note that in Equation \ref{equ:T_matrix_element}, the $F$-matrix on the left hand side is computed in $\mcC$ while the $F$-matrix on the right hand side is computed in $\mcD$.

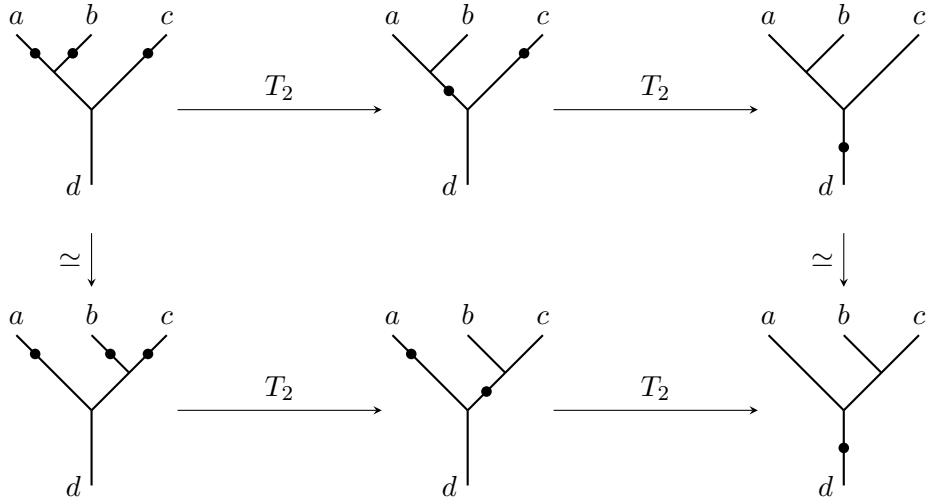
\begin{figure}
\centering
\begin{tikzpicture}[thick]

\begin{scope}[xshift = 0cm, yshift = 0cm]
\draw (0,0) -- (-1,1)node[above]{$a$};
\draw (0,0) -- (1,1)node[above]{$c$};
\draw (0,0) -- (0,-1)node[left]{$d$};
\draw (0,1)node[above]{$b$} -- (-0.5, 0.5);

\fill (-0.75, 0.75) circle[radius = 2pt];
\fill (-0.25, 0.75) circle[radius = 2pt];
\fill (0.75, 0.75) circle[radius = 2pt];

\node (R_One) at (1,0) {};
\node (B_One) at (0,-1.5) {};
\end{scope}

\begin{scope}[xshift = 5cm, yshift = 0cm]
\draw (0,0) -- (-1,1)node[above]{$a$};
\draw (0,0) -- (1,1)node[above]{$c$};
\draw (0,0) -- (0,-1)node[left]{$d$};
\draw (0,1)node[above]{$b$} -- (-0.5, 0.5);

\fill (-0.25, 0.25) circle[radius = 2pt];
\fill (0.75, 0.75) circle[radius = 2pt];

\node (R_Two) at (1,0) {};
\node (L_Two) at (-1,0) {};
\end{scope}

\begin{scope}[xshift = 10cm, yshift = 0cm]
\draw (0,0) -- (-1,1)node[above]{$a$};
\draw (0,0) -- (1,1)node[above]{$c$};
\draw (0,0) -- (0,-1)node[left]{$d$};
\draw (0,1)node[above]{$b$} -- (-0.5, 0.5);

\fill (0, -0.5) circle[radius = 2pt];

\node (B_Three) at (0,-1.5) {};
\node (L_Three) at (-1,0) {};
\end{scope}

\begin{scope}[xshift = 0cm, yshift = -4cm]
\draw (0,0) -- (-1,1)node[above]{$a$};
\draw (0,0) -- (1,1)node[above]{$c$};
\draw (0,0) -- (0,-1)node[left]{$d$};
\draw (0,1)node[above]{$b$} -- (0.5, 0.5);

\fill (-0.75, 0.75) circle[radius = 2pt];
\fill (0.25, 0.75) circle[radius = 2pt];
\fill (0.75, 0.75) circle[radius = 2pt];

\node (R_Four) at (1,0) {};
\node (T_Four) at (0,1.5) {};
\end{scope}

\begin{scope}[xshift = 5cm, yshift = -4cm]
\draw (0,0) -- (-1,1)node[above]{$a$};
\draw (0,0) -- (1,1)node[above]{$c$};
\draw (0,0) -- (0,-1)node[left]{$d$};
\draw (0,1)node[above]{$b$} -- (0.5, 0.5);

\fill (-0.75, 0.75) circle[radius = 2pt];
\fill (0.25, 0.25) circle[radius = 2pt];

\node (R_Five) at (1,0) {};
\node (L_Five) at (-1,0) {};
\end{scope}

\begin{scope}[xshift = 10cm, yshift = -4cm]
\draw (0,0) -- (-1,1)node[above]{$a$};
\draw (0,0) -- (1,1)node[above]{$c$};
\draw (0,0) -- (0,-1)node[left]{$d$};
\draw (0,1)node[above]{$b$} -- (0.5, 0.5);

\fill (0, -0.5) circle[radius = 2pt];

\node (T_Six) at (0,1.5) {};
\node (L_Six) at (-1,0) {};
\end{scope}

\foreach \from/\to in {R_One/L_Two, R_Two/L_Three,   R_Four/L_Five, R_Five/L_Six} \draw[->, > = stealth, thin] (\from) -- (\to)node[pos = .5, above]{$T_2$}; 
\foreach \from/\to in {B_One/T_Four,B_Three/T_Six} \draw[->, > = stealth, thin] (\from) -- (\to)node[pos = .5, left]{$\simeq$};

\end{tikzpicture}
\caption{The hexagon equation for $T_2$.}\label{fig:T_2_hexagon}
\end{figure}

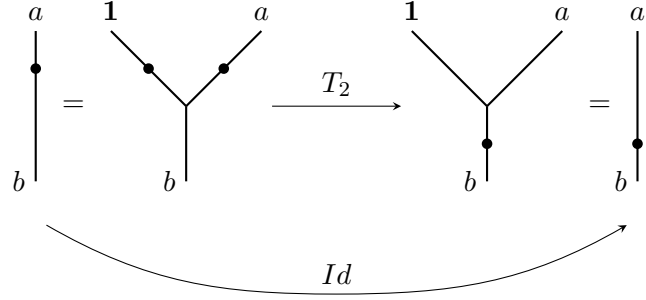
\begin{figure}
\centering
\begin{tikzpicture}[thick]
\begin{scope}[xshift = 0cm, yshift = 0cm]
\draw (0,-1)node[left]{$b$} -- (0,1)node[above]{$a$};
\fill (0,0.5) circle[radius = 2pt];
\draw (0.5, 0) node{$=$};
\begin{scope}[xshift = 2cm]
\draw (0,0) -- (-1,1)node[above]{$\unit$};
\draw (0,0) -- (1,1)node[above]{$a$};
\draw (0,0) -- (0,-1)node[left]{$b$};

\fill (-0.5,0.5) circle[radius = 2pt];
\fill (0.5,0.5) circle[radius = 2pt];

\node (R_One) at (1,0) {};
\end{scope}
\node (B_One) at (0,-1.5) {};
\end{scope}

\begin{scope}[xshift = 6cm, yshift = 0cm]
\draw (0,0) -- (-1,1)node[above]{$\unit$};
\draw (0,0) -- (1,1)node[above]{$a$};
\draw (0,0) -- (0,-1)node[left]{$b$};

\fill (0,-0.5) circle[radius = 2pt];

\draw (1.5, 0) node{$=$};

\draw (2,-1)node[left]{$b$} -- (2,1)node[above]{$a$};
\fill (2,-0.5) circle[radius = 2pt];

\node (L_Two) at (-1,0){};
\node (B_Two) at (2,-1.5){};

\end{scope}

\draw[->, >=stealth,  thin] (R_One) -- (L_Two)node[pos = .5, above]{$T_2$};
\draw[->, >=stealth, thin] (B_One)[out = -30, in = 180] to (4, -2.5)node[above]{$Id$}[out = 0, in = -150] to (B_Two);
\end{tikzpicture}
\caption{The first triangle equation for $T_2$}\label{fig:T_2_triangle1}
\end{figure}

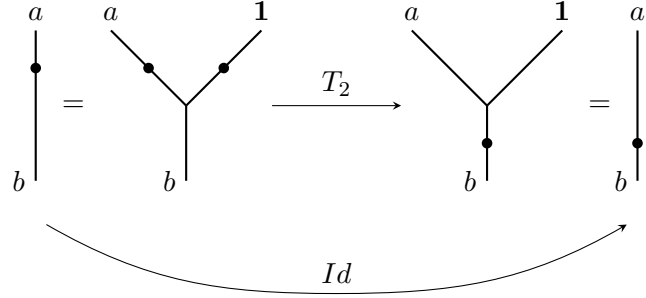
\begin{figure}
\centering
\begin{tikzpicture}[thick]
\begin{scope}[xshift = 0cm, yshift = 0cm]
\draw (0,-1)node[left]{$b$} -- (0,1)node[above]{$a$};
\fill (0,0.5) circle[radius = 2pt];
\draw (0.5, 0) node{$=$};
\begin{scope}[xshift = 2cm]
\draw (0,0) -- (-1,1)node[above]{$a$};
\draw (0,0) -- (1,1)node[above]{$\unit$};
\draw (0,0) -- (0,-1)node[left]{$b$};

\fill (-0.5,0.5) circle[radius = 2pt];
\fill (0.5,0.5) circle[radius = 2pt];

\node (R_One) at (1,0) {};
\end{scope}
\node (B_One) at (0,-1.5) {};
\end{scope}

\begin{scope}[xshift = 6cm, yshift = 0cm]
\draw (0,0) -- (-1,1)node[above]{$a$};
\draw (0,0) -- (1,1)node[above]{$\unit$};
\draw (0,0) -- (0,-1)node[left]{$b$};

\fill (0,-0.5) circle[radius = 2pt];

\draw (1.5, 0) node{$=$};

\draw (2,-1)node[left]{$b$} -- (2,1)node[above]{$a$};
\fill (2,-0.5) circle[radius = 2pt];

\node (L_Two) at (-1,0){};
\node (B_Two) at (2,-1.5){};

\end{scope}

\draw[->, >=stealth,  thin] (R_One) -- (L_Two)node[pos = .5, above]{$T_2$};
\draw[->, >=stealth, thin] (B_One)[out = -30, in = 180] to (4, -2.5)node[above]{$Id$}[out = 0, in = -150] to (B_Two);
\end{tikzpicture}
\caption{The second triangle equation for $T_2$}\label{fig:T_2_triangle2}
\end{figure}

\subsection{Hopf Monads}\label{4.3}
We give a skeletal definition of a left Hopf monad. The corresponding definition for a right Hopf monad can be carried out analogously. Again let $\mcC$ be a rank=$n$ multiplicity-free fusion category with a complete set of representatives $\L = \L(\mcC) = \{a,b,c, \cdots\}$.

Let $(T, H, T_0, \eta)$ be a left Hopf monad on $\mcC$ according to \hyperref[dfn1]{\textbf{Definition 1}}. $T$ is a functor and is determined by the $n \times n$ integer-valued matrix $(T_{ab})$:
\begin{align}
    T(b)  = \oplus_{a \in \L} T_{ab}\, a, \quad b\in\L. 
\end{align}

$H$ is a natural isomorphism from $T \circ \otimes \circ (Id \times T)$ to $\otimes \circ (T \times T)$. The diagrammatical representations for the above two functors are represented in Figure \ref{fig:T_graph_examples} (IV) and (V), respectively. Hence $H$ is a collection of invertible matrices $\{H_{c}^{ab}\,\colon\, a,b,c \in \L\}$, where the matrix elements of $H^{ab}_{c}$ are given by
\begin{align}
    \{H^{ab}_{c;(f\gamma;g\theta),(d\alpha;e\beta)}\,\colon\, d,e,f,g \in \L, 1 \leq \alpha \leq T_{db}, 1 \leq \beta \leq T_{ce}, 1 \leq \gamma \leq T_{fa}, 1 \leq \theta \leq T_{gb} \}, 
\end{align}
or diagrammatically illustrated in Figure \ref{fig:H_matrix_elements}. In particular, the number of rows and the number of columns of $H^{ab}_{c}$ must be equal:
\begin{align}
\label{equ:H_matrix_size}
    \sum_{f,g \in L} T_{fa}T_{gb}N_{fg}^c &= \sum_{d,e \in L} T_{db}T_{ce}N_{ad}^e.
\end{align}

The matrix elements for $T_0\colon T(\unit) \to \unit$ is determined by a vector
\begin{align}
\label{equ:T_0_elements}
\{\epsilon_{\alpha}\,\colon\, 1 \leq \alpha \leq T_{\unit\unit}\},    
\end{align}
and the natural transformation $\eta\colon 1_\mcC \to T $ gives a family of vectors $\{\eta_{a}\,\colon\, a \in \L\}$ where $\eta_{a}$ is 
\begin{align}
    \label{equ:eta_elements}
    \{\eta_{a,\alpha}\,\colon\, 1 \leq \alpha \leq T_{aa}\}.
\end{align}
See Figure \ref{fig:T_0_eta_matrix_elements} for a diagrammatic definition of the vectors $\epsilon$ and $\eta_{a}$.

\begin{figure}
\centering
\begin{tikzpicture}[thick]
\begin{scope}[xshift = 0cm]
\draw (0,2-0)node[above]{$a$} -- (1,2-1);
\draw (2,2-0)node[above]{$b$} -- (1,2-1)node[pos = .5, anchor = south east]{$\alpha$}node[pos = .75, anchor = north west ]{$d$};
\draw (1,2-2)node[below]{$c$} -- (1,2-1)node[pos = .5, right]{$\beta$}node[pos = .75, left]{$e$};
\fill (1.5,2-0.5) circle[radius = 2pt];
\fill (1,2-1.5) circle[radius = 2pt];
\end{scope}

\begin{scope}[xshift = 2.5cm]
\draw[->, >=stealth] (0,2-1) -- (3,2-1)node[pos = .5, above]{$H^{ab}_{c;(f\gamma;g\theta),(d\alpha;e\beta)}$};
\end{scope}

\begin{scope}[xshift = 6cm]
\draw (0,2-0)node[above]{$a$} -- (1,2-1)node[pos = .5, anchor = south west]{$\gamma$}node[pos = .75, anchor = north east]{$f$};
\draw (2,2-0)node[above]{$b$} -- (1,2-1)node[pos = .5, anchor = south east]{$\theta$}node[pos = .75, anchor = north west ]{$g$};
\draw (1,2-2)node[below]{$c$} -- (1,2-1);
\fill (1.5,2-0.5) circle[radius = 2pt];
\fill (0.5,2-0.5) circle[radius = 2pt];
\end{scope}
\end{tikzpicture}
\caption{The matrix elements of $H^{ab}_{c}$}\label{fig:H_matrix_elements}
\end{figure}

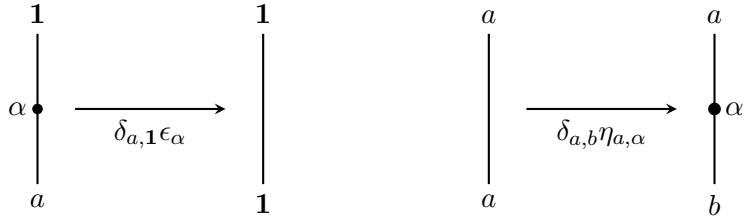
\begin{figure}
\centering
\begin{tikzpicture}[thick]
\begin{scope}[xshift = 0cm]
\draw (0,2-0)node[above]{$\unit$} -- (0,2-2) node[below]{$a$} node[pos = .5, left]{$\alpha$};
\fill (0,2-1) circle[radius = 2pt];
%\draw[fill = white] (0,2-0) circle[radius = 2pt];

\draw[->, >=stealth] (0.5, 2-1) --(2.5,2-1) node[pos = .5, below]{$\delta_{a,\unit}\epsilon_{\alpha}$};

\draw (3,2-0)node[above]{$\unit$}--(3,2-2)node[below]{$\unit$};
%\draw[fill = white] (3,2-0) circle[radius = 2pt];
\end{scope}

\begin{scope}[xshift = 6cm]
\draw (0,2-0)node[above]{$a$} -- (0,2-2) node[below]{$a$};
\draw[->, >=stealth] (0.5, 2-1) --(2.5,2-1) node[pos = .5, below]{$\delta_{a,b}\eta_{a,\alpha}$};

\draw (3,2-0)node[above]{$a$}--(3,2-2)node[below]{$b$} node[pos = .5, right]{$\alpha$};
\draw[fill = black] (3,2-1) circle[radius = 2pt];
\end{scope}
\end{tikzpicture}
\caption{The matrix elements of $T_0$ (Left) and $\eta$ (Right).}\label{fig:T_0_eta_matrix_elements}
\end{figure}

The heptagon axiom for $H$ is the equality of two paths of natural transformations from the functor $T\circ \otimes \circ (Id \times T\circ \otimes)\circ(Id \times Id \times T)$ to $\otimes \circ (Id \times \otimes)\circ(T \times T \times T)$. The diagrammatic  representation of the axiom is presented in Figure \ref{fig:heptagon}, from which we derive the Heptagon Equation:
\begin{equation}
\begin{split}
\label{equ:heptagon_matrix_elements}
&{}\quad\ \,\sum_{\theta} H^{af}_{d;(g'{\beta}';h'\theta),(g{\beta};h{\gamma})}\,H^{bc}_{h';(e'{\alpha}';f'{\gamma}'),(e{\alpha};fw)} \quad = \\
&\sum_{\substack{\tilde{e}, \tilde{f}, \tilde{g}, \hat{f}  \tilde{\alpha}, \tilde{\beta}, \hat{\beta}}} H^{bc}_{g;(\tilde{e}\tilde{{\alpha}};\tilde{f}\tilde{{\beta}}),(e{\alpha};f{\beta})} \, (F^{a\tilde{e}\tilde{f}}_{h})^{-1}_{\tilde{g}g}\, H^{\tilde{g}c}_{d;(\hat{f}\hat{{\beta}};f'{\gamma}'),(\tilde{f}\tilde{{\beta}};h{\gamma})}
 H^{ab}_{\hat{f};(g'{\beta}';e'{\alpha}'),(\tilde{e}\tilde{{\alpha}};\tilde{g}\hat{{\beta}})}\, F^{g'e'f'}_{d;h'\hat{f}} 
\end{split}
\end{equation}

Conditions in Equation \ref{equ:HM_triangle_axioms} can be represented in a similar way by diagrams. As an example, we illustrate the diagrams for the first two equalities in Figure \ref{fig:HM_trangle1} and \ref{fig:HM_trangle2}, respectively, and leave the rest as an exercise. The four equalities in Equation \ref{equ:HM_triangle_axioms} are written, in terms of matrix elements, in the following equations. 
\begin{align}
    \label{equ:HM_triangle_matrix_elements}
    \begin{split}
        \sum_{{  { \beta}}} \eta_{c,{  { \beta}}}\ H^{ab}_{c; (e{ \gamma};f{ \theta}),(d{ \alpha};c{  { \beta}})} = \delta_{f,d}\delta_{e,a}\delta_{{ \theta},{ \alpha}}\eta_{a,{ \gamma}}, \quad & \quad  \sum_{{  { \alpha}}} \eta_{\unit, {  { \alpha}}} \ \epsilon_{{  { \alpha}}} =1, \\
         \sum_{{  { \theta}}} H^{a\unit}_{b; (b{ \gamma};\unit {  { \theta}}),(c{ \alpha};d{ \beta})} \ \epsilon_{{  { \theta}}} = \delta_{c,\unit}\delta_{d,a}\delta_{{ \gamma},{ \beta}}\epsilon_{{ \alpha}}, \quad & \quad \sum_{{  { \beta}},{  { \gamma}}} H^{\unit a}_{b; (\unit {  { \gamma}}; b { \theta}),(a {  { \beta}}; a { \alpha})} \  \eta_{a {  { \beta}}} \ \epsilon_{{  { \gamma}}} = \delta_{{ \theta},{ \alpha}}.
    \end{split}
\end{align}

In summary, a left Hopf monad is defined by the data $\{T_{ab}, H^{ab}_{c;(f\gamma;g\theta),(d\alpha;e\beta)}, \epsilon_{\alpha}, \eta_{a, \alpha} \}$ satisfying Equations \ref{equ:H_matrix_size}, \ref{equ:heptagon_matrix_elements}, and \ref{equ:HM_triangle_matrix_elements}.

\iffalse
\begin{align}
    \sum_{{  { \beta}}} \eta_{c,{  { \beta}}}\ H^{ab}_{c; (e{ \gamma};f{ \theta}),(d{ \alpha};c{  { \beta}})} &= \delta_{f,d}\delta_{e,a}\delta_{{ \theta},{ \alpha}}\eta_{a,{ \gamma}}.
\end{align}

\begin{align}
    \sum_{{  { \alpha}}} \eta_{\unit, {  { \alpha}}} \ \epsilon_{{  { \alpha}}} &=1.
\end{align}

\begin{align}
    \sum_{{  { \theta}}} H^{a\unit}_{b; (b{ \gamma};\unit {  { \theta}}),(c{ \alpha};d{ \beta})} \ \epsilon_{{  { \theta}}} &= \delta_{c,\unit}\delta_{d,a}\delta_{{ \gamma},{ \beta}}\epsilon_{{ \alpha}}.
\end{align}

\begin{align}
    \sum_{{  { \beta}},{  { \gamma}}} H^{\unit a}_{b; (\unit {  { \gamma}}; b { \theta}),(a {  { \beta}}; a { \alpha})} \  \eta_{a {  { \beta}}} \ \epsilon_{{  { \gamma}}} &= \delta_{{ \theta},{ \alpha}}.
\end{align}
\fi

%Heptagon radius = 6cm
\begin{figure}
\centering
\begin{tikzpicture}[thick]
\begin{scope}[xshift = -1.5cm, yshift = 4.75cm]
\draw (0,2.5-0)node[above]{$a$} -- (1.5,2.5-1.5)node[pos = 0.375, anchor = north east](out17){};
\draw (3,2.5-0)node[above]{$c$} -- (1.5,2.5-1.5)node[pos = .25, anchor = south east]{${\alpha}$}node[pos = .75, anchor = south east ]{${\beta}$}node[pos = 0.375, anchor = north west](out12){$e$}node[pos = 0.625, anchor = north west]{$f$}node[pos = 0.875, anchor = north west]{$g$};
\draw (1.5,2.5-2.5)node[below]{$d$} -- (1.5,2.5-1.5)node[pos = .5, right]{${\gamma}$}node[pos = .75, left]{$h$};
\draw (1.5,2.5-0)node[above]{$b$} -- (2.25, 2.5-0.75);
\fill (1.5+0.375,2.5-1.5+0.375) circle[radius = 2pt];
\fill (3-0.375,2.5-0.375) circle[radius = 2pt];
\fill (1.5, 2.5-2)circle[radius = 2pt];
\end{scope}

\begin{scope}[xshift = 3.19099cm, yshift = 2.49094cm]
\draw (0,2.5-0)node[above]{$a$} -- (1.5,2.5-1.5)node[pos = .375, anchor = north east]{};
\draw (3,2.5-0)node[above]{$c$} -- (1.5,2.5-1.5)node[pos = .25, anchor = south east]{$\tilde{{\beta}}$}node[pos = 0.375, anchor = north west](out23){$\tilde{f}$}node[pos = 0.75, anchor = north west]{$g$};
\draw (1.5,2.5-0)node[above](in12){$b$} -- (2.25, 2.5-0.75)node[pos = .5, anchor = east]{$\tilde{{\alpha}}$}node[pos = .75, anchor = north east]{$\tilde{e}$};
\draw (1.5,2.5-2.5)node[below]{$d$} -- (1.5,2.5-1.5)node[pos = .5, right]{${\gamma}$}node[pos = .75, left]{$h$};
\fill (1.5+0.375,2.5-0.375) circle[radius = 2pt];
\fill (3-0.375,2.5-0.375) circle[radius = 2pt];
\fill (1.5, 2.5-2)circle[radius = 2pt];
\end{scope}

\begin{scope}[xshift = 4.34957cm, yshift = -2.58513cm]
\draw (0,2.5-0)node[above]{$a$} -- (1.5,2.5-1.5)node[pos = .75, anchor = north east](out34){$\tilde{g}$};
\draw (3,2.5-0)node[above]{$c$} -- (1.5,2.5-1.5)node[pos = .25, anchor = south east]{$\tilde{{\beta}}$}node[pos = 0.375, anchor = north west]{$\tilde{f}$};
\draw (1.5,2.5-0)node[above](in23){$b$} -- (0.75, 2.5-0.75)node[pos = .5, anchor = south east]{$\tilde{{\alpha}}$}node[pos = .75, anchor = north west]{$\tilde{e}$};
\draw (1.5,2.5-2.5)node[below]{$d$} -- (1.5,2.5-1.5)node[pos = .5, right]{${\gamma}$}node[pos = .75, left]{$h$};
\fill (1.5-0.375,2.5-0.375) circle[radius = 2pt];
\fill (3-0.375,2.5-0.375) circle[radius = 2pt];
\fill (1.5, 2.5-2)circle[radius = 2pt];
\draw (0.75,2.5-0)node{};
\end{scope}

\begin{scope}[xshift = 1.1033cm, yshift = -6.65581cm]
\draw (0,2.5-0)node[above]{$a$} -- (1.5,2.5-1.5)node[pos = .625, anchor = north east](out45){$\tilde{g}$}node[pos = .75, anchor = south west]{$\hat{{\beta}}$}node[pos = .875, anchor = north east]{$\hat{f}$};
\draw (3,2.5-0)node[above]{$c$} -- (1.5,2.5-1.5)node[pos = .25, anchor = south east]{${\gamma}'$}node[pos = 0.375, anchor = north west]{$f'$}node[pos = .75, anchor = north west]{};
\draw (1.5,2.5-0)node[above](in34){$b$} -- (0.75, 2.5-0.75)node[pos = .5, anchor = west]{$\tilde{{\alpha}}$}node[pos = .75, anchor = south east]{$\tilde{e}$};
\draw (1.5,2.5-2.5)node[below]{$d$} -- (1.5,2.5-1.5);
\fill (1.5-0.375,2.5-0.375) circle[radius = 2pt];
\fill (3-0.375,2.5-0.375) circle[radius = 2pt];
\fill (1.5-0.375, 2.5-1.5+0.375)circle[radius = 2pt];
\end{scope}

\begin{scope}[xshift = -4.1033cm, yshift = -6.65581cm]
\draw (0,2.5-0)node[above]{$a$} -- (1.5,2.5-1.5)node[pos = .25, anchor = south west]{${\beta}'$}node[pos = .375, anchor = north east]{$g'$}node[pos = .75, anchor = north east]{$\hat{f}$};
\draw (3,2.5-0)node[above]{$c$} -- (1.5,2.5-1.5)node[pos = .25, anchor = south east]{${\gamma}'$}node[pos = 0.625, anchor = north west](in45){$f'$};
\draw (1.5,2.5-0)node[above](out56){$b$} -- (0.75, 2.5-0.75)node[pos = .5, anchor = west]{${\alpha}'$}node[pos = .75, anchor = north west]{$e'$};
\draw (1.5,2.5-2.5)node[below]{$d$} -- (1.5,2.5-1.5);
\fill (1.5-0.375,2.5-0.375) circle[radius = 2pt];
\fill (3-0.375,2.5-0.375) circle[radius = 2pt];
\fill (0.375, 2.5-0.375)circle[radius = 2pt];
\end{scope}

\begin{scope}[xshift = -7.34957cm, yshift = -2.58513cm]
\draw (0,2.5-0)node[above]{$a$} -- (1.5,2.5-1.5)node[pos = .25, anchor = south west]{${\beta}'$}node[pos = .375, anchor = north east]{$g'$};
\draw (3,2.5-0)node[above]{$c$} -- (1.5,2.5-1.5)node[pos = .25, anchor = south east]{${\gamma}'$}node[pos = 0.375, anchor = north west]{$f'$}node[pos = .75, anchor = north west](in56){$h'$};
\draw (1.5,2.5-0)node[above](in76){$b$} -- (1.5+0.75, 2.5-0.75)node[pos = .5, anchor = east]{${\alpha}'$}node[pos = .75, anchor = north east]{$e'$};
\draw (1.5,2.5-2.5)node[below]{$d$} -- (1.5,2.5-1.5);
\fill (1.5+0.375,2.5-0.375) circle[radius = 2pt];
\fill (3-0.375,2.5-0.375) circle[radius = 2pt];
\fill (0.375, 2.5-0.375)circle[radius = 2pt];
\draw (2.25,0)node{};
\end{scope}

\begin{scope}[xshift = -6.19099cm, yshift = 2.49094cm]
\draw (0,2.5-0)node[above]{$a$} -- (1.5,2.5-1.5)node[pos = .25, anchor = south west]{${\beta}'$}node[pos = .375, anchor = north east](out76){$g'$};
\draw (3,2.5-0)node[above]{$c$} -- (1.5,2.5-1.5)node[pos = .25, anchor = south east]{${\alpha}$}node[pos = 0.375, anchor = north west]{$e$}node[pos = .625, anchor = north west]{$f$}node[pos = .75, anchor = south east]{$\theta$}node[pos = .875, anchor = north west]{$h'$};
\draw (1.5,2.5-0)node[above](in17){$b$} -- (1.5+0.75, 2.5-0.75);
\draw (1.5,2.5-2.5)node[below]{$d$} -- (1.5,2.5-1.5);
\fill (1.5+0.375,2.5-1.5+0.375) circle[radius = 2pt];
\fill (3-0.375,2.5-0.375) circle[radius = 2pt];
\fill (0.375, 2.5-0.375)circle[radius = 2pt];
\end{scope}

\draw[->, >=stealth,thin](out12)--(in12)node[pos = .5,anchor = south ]{$H$};
\draw[->, >=stealth,thin](out23)--(in23)node[pos = .5, anchor = west]{$F^{-1}$};
\draw[->, >=stealth,thin](out34)--(in34)node[pos = .5, anchor = north west]{$H$};
\draw[->, >=stealth,thin](out45)--(in45)node[pos = .5, below]{$H$};
\draw[->, >=stealth,thin](out56)--(in56)node[pos = .5, anchor = north east]{$F$};
\draw[->, >=stealth,thin](out17)--(in17)node[pos = .7, anchor = south east]{$H$};
\draw[->, >=stealth,thin](out76)--(in76)node[pos = .5, anchor = east]{$H$};

\end{tikzpicture}
\caption{Heptagon equation for $H$} \label{fig:heptagon}
\end{figure}
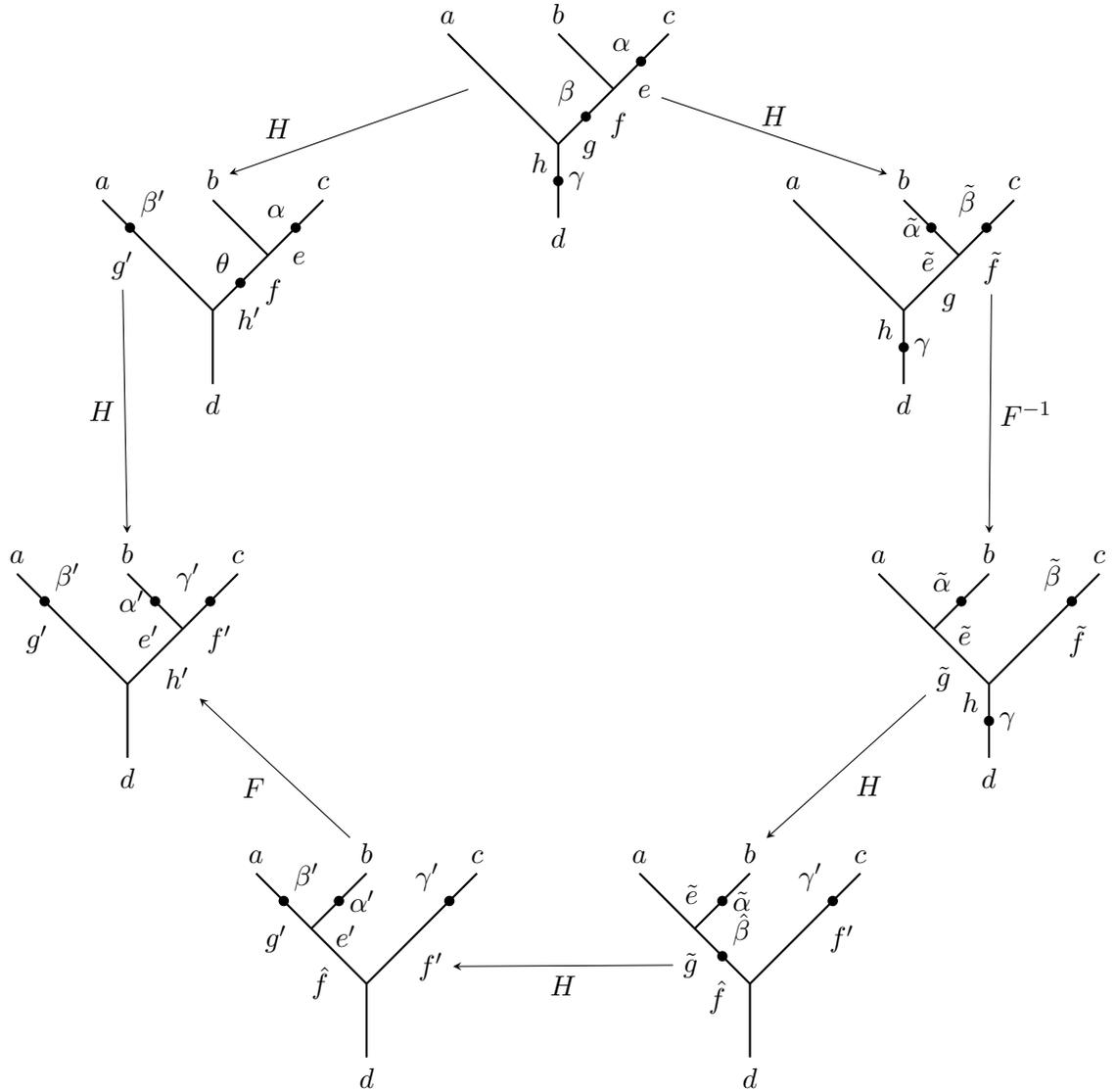

%eq 2
\begin{figure}
\centering
\begin{tikzpicture}[thick]
\begin{scope}[xshift = 0cm]
\draw (0,2-0)node[above]{$a$} -- (1,2-1);
\draw (2,2-0)node[above]{$b$} -- (1,2-1)node[pos = .5, anchor = south east]{$\alpha$}node[pos = .75, anchor = north west ]{$d$};
\draw (1,2-2)node[below]{$c$} -- (1,2-1);
\fill (1.5,2-0.5) circle[radius = 2pt];
\end{scope}

\begin{scope}[xshift = 5cm]
\draw (0,2-0)node[above]{$a$} -- (1,2-1);
\draw (2,2-0)node[above]{$b$} -- (1,2-1)node[pos = .5, anchor = south east]{$\alpha$}node[pos = .75, anchor = north west ]{$d$};
\draw (1,2-2)node[below]{$c$} -- (1,2-1)node[pos = .5, right]{$\beta$}node[pos = .75, left]{$c$};
\fill (1.5,2-0.5) circle[radius = 2pt];
\fill (1,2-1.5) circle[radius = 2pt];
\end{scope}

\begin{scope}[xshift = 10cm, yshift = 0cm]
\draw (0,2-0)node[above]{$a$} -- (1,2-1)node[pos = .5, anchor = south west]{${\gamma}$}node[pos = .75, anchor = north east]{$e$};
\draw (2,2-0)node[above]{$b$} -- (1,2-1)node[pos = .5, anchor = south east]{$\theta$}node[pos = .75, anchor = north west ]{$f$};
\draw (1,2-2)node[below]{$c$} -- (1,2-1);
\fill (1.5,2-0.5) circle[radius = 2pt];
\fill (0.5,2-0.5) circle[radius = 2pt];
\end{scope}

\draw[->, >=stealth,thin] (3-0.25,1)--(4+0.25,1)node[pos = .5, above]{$\eta$};
\draw[->, >=stealth,thin] (8-0.25,1)--(9+0.25,1)node[pos = .5, above]{$H$};
\draw[->, >=stealth,thin] (3-0.25,0)[out = -30, in = 180]to (6,-1)[out = 0, in = -150]node[below]{$\eta$} to(9+0.25,0);

\end{tikzpicture}
\caption{The first equality in Equation \ref{equ:HM_triangle_axioms}}\label{fig:HM_trangle1}
\end{figure}
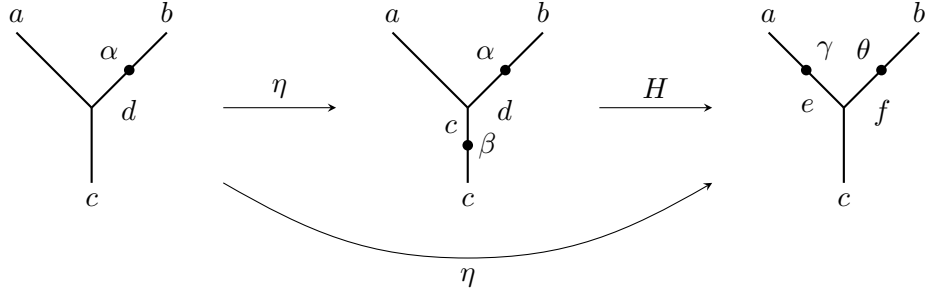

%eq 4
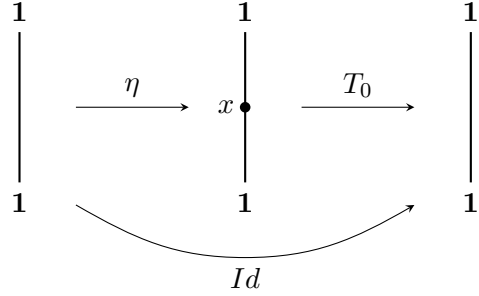
\begin{figure}
\centering
\begin{tikzpicture}[thick]
\begin{scope}
\draw (0,2-0)node[above]{$\unit$} -- (0,2-2) node[below]{$\unit$}node[pos = .5, left]{};
%\draw[fill= white] (0,2-0) circle[radius = 2pt];
\end{scope}
\begin{scope}[xshift = 3cm]
\draw (0,2-0)node[above]{$\unit$} -- (0,2-2) node[below]{$\unit$}node[pos = .5, left]{$x$};
\fill (0,2-1) circle[radius = 2pt];
%\draw[fill= white] (0,2-0) circle[radius = 2pt];
\end{scope}
\begin{scope}[xshift = 6cm, yshift = 0cm]
\draw (0,2-0)node[above]{$\unit$} -- (0,2-2) node[below]{$\unit$}node[pos = .5, left]{};
%\draw[fill= white] (0,2-0) circle[radius = 2pt];
\end{scope}

\draw[->, >=stealth,thin] (1-0.25, 1) -- (2+0.25,1)node[pos = .5, above]{$\eta$};
\draw[->, >=stealth,thin] (4-0.25, 1) -- (5+0.25,1)node[pos = .5, above]{$T_0$};
\draw[->, >=stealth,thin] (1-0.25,-0.3)[out = -30, in = 180] to (3,-1)[out = 0, in = -150]node[below]{$Id$} to (5+0.25,-0.3);
\end{tikzpicture}
\caption{The second equality in Equation \ref{equ:HM_triangle_axioms}}\label{fig:HM_trangle2}
\end{figure}

\section{Classification of Hopf monads}\label{5}
In this section, we consider HMs in $\VecZp$ as well as those from condensations.  For $p=1$, the classification of HMs is reduced to the classification of Hopf algebras over $\mbbC$, and this most trivial case shows the difficulty of classifying all HMs. Hence, instead of classifying all HMs, we attempt to characterize them and find many interesting examples.  At the same time, we will focus on some examples and describe their module categories, where the classification of indecomposable module categories over $\Vec_G$ (see e,g. \cite[2.7]{etingof2009fusion}) shall be used. Every indecomposable module category over $\Vec_G$ is essentially determined by 
\begin{itemize}
    \item a group algebra $\mbbC[K]$ for a subgroup $K \subset G$,
    \item a $2$-cocycle $\psi$ on $K$ determining the multiplication in $\mbbC[K]$. 
\end{itemize}
The module category associated with $(K,\psi)$ has equivalence classes of simple objects corresponding to the left cosets $\faktor{G}{K}$ and the $\Vec_G$ action is the obvious one: $g. [aK] = [gaK]$.

\subsection{Characterization of HMs on $\VecZp$ for $p$ prime}\label{5.1}

\subsubsection{$\Vec$}\label{5.1.1}
A HM $T$ is determined by its action on the unit object and $T(\unit)=\mcH \implies T(X)=\mcH \otimes X$. It is then easy to observe that the Hopf monad axioms, with all objects present replaced by the unit, imply that $\mcH$ is a Hopf algebra. Therefore, all possible HMs on $\Vec$ are determined by an \textit{extension} of the Hopf algebra structure. If one assumes the HM structural morphisms ($\mu,\eta,\ldots$) to be additive as well, then the $T$ structure is entirely determined by $\mcH$ as all objects in $\Vec$ are sums of copies of unit.   That is, any $T$ is of the form $\mcH \otimes -$.

\begin{remark}\label{rmk2}
As explained above, in case a HM is of the form $T(-)=\mcH \otimes -$ at the object level, it does not necessarily imply that the HM structure is entirely determined by the structure of $\mcH$. From now on, all HMs of the form $T(-)=\mcH \otimes -$ for a Hopf algebra $\mcH$ are assumed to have their structure inherited from that of $\mcH$.
\end{remark}

\subsubsection{$\Vec_{\mbbZ_2}=\{\unit,e\}$} \label{5.1.2}
In general for $\Vec_G$, it can be observed that a Hopf algebra is simply a $G$-graded Hopf algebra $\mcH=\oplus \mcH_g$. So for $G=\mbbZ_2$, a Hopf algebra is an extension of the structure on a super-algebra. We can not characterize any further Hopf algebras, as its most trivial case contains all Hopf algebras over $\Vec$ (when $\mcH_g=0$ for $g\neq 1$). But we will examine the case of $2+e$ which is applied in an example of condensation.  Note that here $e$ denotes the non-trivial simple object in $\Vec_{\mbbZ_2}$.

Super-algebra structures on $2+e$ have been classified in \cite{aissaoui2013classification}. We wish to extend those to a Hopf algebra structure. Notice that this is different from the notion of a \textit{Hopf superalgebra} in \cite{aissaoui2013classification} which has a fermionic braiding, while the braiding here in $\Vec_G$ is trivial. Mathematica computations show that there is a unique Hopf algebra structure on $\mcH = 2+e = \mbbC^2 \oplus \mbbC$. As a super-algebra, it is identified with the $\mbbZ_2$-graded algebra $\mbbC[x,y]/ \langle x^2-x, y^2-x, xy -y \rangle$, where $1$ and $x$ have grading $0$ and $y$ has grading $1$. With this identification, other structural maps are given as follows:
\begin{equation}
\begin{aligned}
\Delta(x) &= 1 \otimes x + x \otimes 1 -\frac{3}{2} x \otimes x + \frac{1}{2} y \otimes y, \\
\Delta(y) &= 1 \otimes y -\frac{3}{2} x \otimes y + y \otimes 1 -\frac{3}{2} y \otimes x.
\end{aligned}
\end{equation}
\begin{align}
\epsilon(x) = \epsilon(y) = 0.
\end{align} 
\begin{align}
S(x) = x, \qquad S(y) = -y.
\end{align}

By definition, $\mcH$ is commutative. It also has an integral $1-x$ and $\epsilon(1-x) = 1$. As a graded algebra, $\mcH$ decomposes into the direct sum of two indecomposable graded subalgebras,
\begin{align}
\mcH = \Span\{1-x\} \oplus \Span\{x,y\},
\end{align}
or abstractly $2+e = \unit + (\unit + e)$. The algebra structures on $\unit$ and $\unit + e$ correspond to the trivial subgroup and $\mbbZ_2$, respectively. Therefore, by definition (\cite[Def. 9 and Remark 5]{ostrik2003module}), $\mcH$ is semisimple and there are three irreducible graded modules $A, B, C$.  Indeed, $\mcH$ decomposes to two algebras $\unit$ and $\unit+e$, therefore its module category decompose to the two module categories for the two algebras and we know already that these algebras give indeed semisimple abelian categories as their module category.

\begin{remark}\label{rmk3}
Here is an observation that will be used repeatedly later. A Hopf algebra $\mcH$ in $\mcB$ always has the trivial irreducible module $(\unit, \epsilon)$ and moreover for any simple object $h \in \mcB$, $(h, \epsilon_\mcH \otimes id_h)$ is also an irreducible module, where $\epsilon_\mcH \otimes id_h\colon \mcH \otimes h \to h$ is the module map.
\end{remark}

$A$ is the trivial module induced by the counit. $B$ is the 1-dimensional space with the non-zero part concentrated on degree $1$. The action of $\mcH$ on $B$ is also given by the counit. $C$ is the 2-dimensional space spanned by $x$ and $y$, and the action of $\mcH$ is given by multiplication. Since $1-x$ is an integral, $\Span\{1-x\}$ as an $\mcH$-module is isomorphic to the trivial irrep $A$, and hence the regular representation decomposes as $\mcH = A \oplus C$.
 
It is direct to see that the three modules $\{A, B,C\}$ form the category $Rep(S_3)$. For instance, the decomposition $C \otimes C = A + B + C$ is given as follows.
\begin{equation}
\begin{aligned}
A &= \Span\{x \otimes x - y \otimes y\}, \\
B &= \Span\{x \otimes y - y \otimes x\},\\
C &= \Span\{x \otimes x + y \otimes y, -x \otimes y - y \otimes x\},
\end{aligned}
\end{equation}
where the first term in $C = \Span\{\cdots\}$ corresponds to $x$ in $C$ and the second corresponds to $y$.

\begin{remark}
The Hopf structure on $2+e$ is an explicit example of a categorical Hopf algebra that has no analogue in Vec as the only semi-simple $3$-dimensional Hopf algebra is the group algebra $\mathbb{C}[\mathbb{Z}_3]$.
\end{remark}

\subsubsection{$\Vec_{\mbbZ_3} = \{\unit, e, e^2\}$}\label{5.1.3}

We would like to see how much the same pattern repeats itself. Inspired by the previous case where the Hopf algebra decomposed as an algebra, one can consider the direct sum of the two algebras $\mcH= \unit \oplus \Vec_{\mbbZ_3}$, where by abusing notation we use $\Vec_{\mbbZ_3}$ to denote the object $\unit \oplus e \oplus e^2$. As a graded algebra, $\mcH$ is identified with $\mbbC[y_1]/ \langle\, y_1^4 - y_1\,\rangle$ where $y_1$ has degree $1$. Hence a basis for $\mcH$ is given by $\{1, y_1^3, y_1, y_1^2\}$.  To do computations with Mathematica, one can use the fact that $\Delta(y_1)^i=\Delta(y_1^i)$. This greatly reduces the number of parameters in the computation. With this trick, all possible Hopf algebra extensions for $\unit \oplus \Vec_{\mbbZ_3}$ can be recovered. In fact, it turns out that there is only one, whose structure maps are as follows. The counit satisfies $\epsilon(1) = 1$, $\epsilon(y_1^i) = 0$, $i=1,2,3$. The antipode $S$ is the identity map unlike the previous case, and the comultiplication is given as follows:
\begin{equation}
    \begin{aligned}
\Delta(1) &= 1 \otimes 1,\\
\Delta(y_1^3)&= 1 \otimes y_1^3 + y_1^3 \otimes 1 -\frac{4}{3}y_1^3 \otimes y_1^3-\frac{1}{3}(y_1\otimes y_1^2+y_1^2\otimes y_1),\\
\Delta(y_1)&= 1 \otimes y_1 + y_1 \otimes 1+\frac{2}{3}y_1^2\otimes y_1^2 -\frac{4}{3}(y_1\otimes y_1^3+y_1^3\otimes y_1),\\ 
\Delta(y_1^2)&= 1 \otimes y_1^2 + y_1^2 \otimes 1+\frac{2}{3}y_1\otimes y_1 -\frac{4}{3}(y_1^2\otimes y_1^3+y_1^3\otimes y_1^2).
\end{aligned}
\end{equation}

Similar to the previous case, the decomposition as subalgebras $\mcH = \Span\{1-y_1^3\} \oplus  \Span\{y_1^3,y_1,y_1^{2}\}$ corresponds abstractly to the decomposition $\mcH = \unit \oplus  \Vec_{\mbbZ_3}$.

The irreducible modules of $\mcH$ are given by its algebra decomposition. The algebra $\Span\{1-y_1^3\}$ gives three modules from the action of counit denoted by $\{y_1^3,y_1,y_1^2\}$, and the second algebra $\Span\{y_1^3,y_1,y_1^2\}$ gives itself as a module, called $\rho$, with the action given by multiplication. The module category is therefore $\{y_1^3,y_1,y_1^2,\rho\}$. As $\{y_1^3,y_1,y_1^2\}$ forms a $\Vec_{\mbbZ_3}$ as a fusion category, it remains to understand how $\rho$ fuses with itself and other simple modules. This is all done by direct calculations and the fusion rules turn out to be a generalization of the previous case, giving the near-group category with the so-called multiplicity $m=|\mbbZ_3|-1=2$:
\begin{align}
\rho \otimes \rho &= y_1^3\oplus y_1 \oplus y_1^2 \oplus 2\rho \\ 
\rho \otimes \alpha &= \rho, \ \ \forall\alpha \in \{y_1^3,y_1,y_1^2\}
\end{align}
This is also noted \cite[Thm. 1.1]{larson2014pseudo} to be $Rep(A_4)$, the representation category of the alternating group $A_4$.

For a general prime $p$, we can also consider the algebra $\unit \oplus \VecZp$ in $\VecZp$ where the first $\VecZp$ denotes the direct sum of all simple objects, each with multiplicity one. The cases for $p=2$ and $p=3$ motivate the conjecture that the algebra $\unit\oplus \VecZp$ has a Hopf algebra structure with representation category the near group category given by $G=\mbbZ_p$ and multiplicity $m=|G|-1$. However, a classification by \cite[Theorem 5.1]{izumi2017cuntz} shows that can only happen if $p+1$ is a prime power. Therefore this conjecture is certainly not true in general and needs to be restricted:
\begin{conjecture}\label{cnj5.1}
In $\VecZp$, the algebra $\unit \oplus \VecZp$ admits a categorical Hopf algebra structure whose representation category is the near group category given by $G=\mbbZ_p$ and multiplicity $m=|G|-1$, if and only if $p=q^m-1$ for some prime $q$.
\end{conjecture}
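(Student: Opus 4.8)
The plan is to split the biconditional into an explicit construction for the ``if'' direction and an appeal to the classification of near-group categories for the ``only if'' direction, after a preliminary dimension count that pins down the numerical invariants. First I would record that the near-group fusion rules with $G=\mbbZ_p$ and multiplicity $m=|G|-1=p-1$ force the Frobenius--Perron dimension of the non-invertible simple object $\rho$ to satisfy $\FPdim(\rho)^2 = p + (p-1)\FPdim(\rho)$, whose positive root is $\FPdim(\rho)=p$; hence the whole category has $\FPdim = p\cdot 1 + p^2 = p(p+1)$. This is exactly the global dimension one expects from a Hopf algebra on $\unit\oplus\VecZp$: the underlying object has $\FPdim = p+1$ inside the symmetric category $\VecZp$ of dimension $p$, and $\FPdim(\Rep_{\VecZp}(\mcH))=\FPdim(\VecZp)\cdot\FPdim(\mcH)=p(p+1)$.

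For the ``if'' direction, assume $p+1=q^m$ is a prime power (the content of the condition $p=q^m-1$). I would exhibit the Hopf algebra directly. Identify $\VecZp$ with the symmetric category $\Rep(\mbbZ_p)$, let $V=(\mbbF_{q^m},+)$ be the additive group of order $p+1$, and let $\mbbZ_p\cong\mbbF_{q^m}^\times$ act on $V$ by multiplication. Because $q^m-1=p$, this action is free on $V\setminus\{0\}$, so $V$ decomposes into the fixed point $\{0\}$ and a single free $\mbbZ_p$-orbit. Take $\mcH=\mbbC^V$, the commutative Hopf algebra of functions on $V$, which is $\mbbZ_p$-equivariant and hence a (commutative, cocommutative) Hopf algebra object of $\VecZp$. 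The orbit decomposition shows that as an object $\mcH\cong\unit\oplus\mbbC[\mbbZ_p]\cong\unit\oplus\VecZp$, matching the required underlying object; for $p=2,3$ this recovers the algebras $2+e$ and $\unit\oplus\Vec_{\mbbZ_3}$ found above.

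Next I would identify the module category. Passing to the Radford biproduct (bosonization) $B=\mcH\rtimes\mbbC[\mbbZ_p]$ gives an ordinary Hopf algebra with $\Rep_{\VecZp}(\mcH)\simeq\Rep(B)$ as fusion categories, and $\Rep(B)$ is the category of $\mbbZ_p$-equivariant $V$-graded vector spaces with the convolution tensor product. By Pontryagin duality on the elementary abelian $V$ together with the orbit/Clifford analysis, this is equivalent to $\Rep(K)$ for the Frobenius group $K=V\rtimes\mbbZ_p=\mathrm{AGL}(1,q^m)$. Standard Frobenius-group representation theory then yields exactly $p$ linear characters inflated from $K/V\cong\mbbZ_p$ (the invertibles, forming $G=\mbbZ_p$) and a single irreducible $\rho$ of dimension $p$ induced from a nontrivial character of $V$, with $\rho\otimes\chi\cong\rho$ and $\rho\otimes\rho\cong\bigoplus_{\chi}\chi\oplus(p-1)\rho$; this is precisely the claimed near-group category.

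For the ``only if'' direction, suppose such a Hopf algebra structure exists; then its representation category is a near-group fusion category with $G=\mbbZ_p$ and multiplicity $p-1$, and I would invoke the classification of such categories, specifically \cite[Theorem 5.1]{izumi2017cuntz}, which forces $p+1$ to be a prime power, i.e.\ $p=q^m-1$. I expect this to be the genuine obstacle: the ``if'' direction is a hands-on construction, whereas the necessity is not elementary --- it rests on the solvability of the polynomial (Cuntz-algebra) equations governing the associator of a near-group category, equivalently on the non-existence of such categories unless $p+1=q^m$. A secondary point to verify is that the conjecture asks for realization over the specific object $\unit\oplus\VecZp$ and not merely for the abstract fusion category; this is automatic from the construction, where the forgetful functor $\Rep_{\VecZp}(\mcH)\to\VecZp$ corresponds to restriction along the complement $\mbbZ_p\hookrightarrow K$.
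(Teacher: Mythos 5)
First, a point of orientation: the paper does \emph{not} prove this statement --- it is Conjecture \ref{cnj5.1}, and the authors supply only evidence for it: Mathematica-verified Hopf algebra structures for $p=2$ (on $2+e$, with module category $\Rep(S_3)$) and $p=3$ (with $\Rep(A_4)$), the computational non-existence of any Hopf structure on $\unit\oplus\Vec_{\mbbZ_5}$, and the necessity observation via \cite[Theorem 5.1]{izumi2017cuntz}. So there is no proof in the paper to compare yours against. Your ``only if'' direction is precisely the paper's own prose justification (a Hopf structure would exhibit the near-group category, whose existence forces $p+1$ to be a prime power by Izumi's classification), and like the paper it rests entirely on that external citation. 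Your ``if'' direction, however, goes genuinely beyond the paper: the equivariant function algebra $\mcH=\mbbC^V$ for $V=(\mbbF_{q^m},+)$ with $\mbbZ_p\cong\mbbF_{q^m}^\times$ acting by multiplication is a commutative Hopf algebra object of $\Rep(\mbbZ_p)\simeq\VecZp$; its underlying object (indeed its underlying algebra, $\unit$ plus the regular algebra) is $\unit\oplus\VecZp$; and its module category is $(\Vec_V)^{\mbbZ_p}\simeq\Rep(\widehat{V}\rtimes\mbbZ_p)\simeq\Rep(V\rtimes\mbbZ_p)=\Rep(\mathrm{AGL}(1,q^m))$, which has exactly the near-group fusion rules $G=\mbbZ_p$, $m=p-1$ by Clifford--Mackey theory. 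This checks out against the paper's data: for $p=2$ you recover $\Rep(S_3)$ and for $p=3$ you recover $\Rep(A_4)$, with commutativity and even the antipode orders matching (your antipode is pullback of $v\mapsto -v$, which is the identity for $p=3$ since $\mathrm{char}\,\mbbF_4=2$, exactly as the paper found). Modulo writing out the standard bosonization/Pontryagin-duality equivalences, your construction upgrades the existence half of the conjecture from two computer-verified cases to a theorem.

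Three caveats you should address explicitly. (1) Notation: in the conjecture the symbol $m$ is overloaded --- it names both the multiplicity $|G|-1=p-1$ and the exponent in $q^m$. You read the condition as ``$p+1$ is a prime power,'' which matches the paper's prose; but under the literal reading $p=q^{p-1}-1$ the two conditions diverge at $p=7$ (here $8=2^3$ is a prime power, yet $7\neq q^6-1$), where your construction would \emph{refute} the literal conjecture rather than prove it. Say which version you are proving. (2) ``The'' near-group category: fusion rules do not determine a fusion category, and Izumi's classification generally allows several inequivalent (e.g.\ Galois-conjugate) categories with the same near-group rules; your construction realizes the group-theoretical representative $\Rep(\mathrm{AGL}(1,q^m))$. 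Since the paper's own examples are $\Rep(S_3)$ and $\Rep(A_4)$ this is surely the intended reading, but the conjecture as phrased does not pin it down. (3) The ``only if'' half is not proved by you (nor by the paper); it stands or falls with \cite[Theorem 5.1]{izumi2017cuntz}, so the dependency should be stated as such rather than folded into the argument.
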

Mathematica computations show that there is no Hopf algebra structure on the algebra $\unit\oplus \Vec_{\mbbZ_5}$. Therefore, one can not hope for a Hopf algebra on $\unit\oplus \VecZp$ for all $p$ either.

\subsection{Condensation Examples}\label{5.2}
We examine the condensation of some anyon theories which should give a large class of examples for extension by Hopf comonads. We will follow the notations defined in \hyperref[3.2.4]{3.2.4}. 
\subsubsection{$\text{Rep}(S_3)$}\label{5.2.1}
A not so complicated condensation is that of  $\mcB=\text{Rep}(S_3)=\{A,B,C\}$ with the condensable algebra $\mcA=A+C$. Notice that the fusion rules are $B\otimes B=A,\, C\otimes C=A+B+C,\,B\otimes C=C$ and $A$ is the unit. Further, to see that $\mcA=A+C$ is a condensable algebra, we note that $\Hom(\unit,\mcA)\cong \mbbC$ (connectedness) and the two conditions \cite[Thm. 3.7, 3.8]{cong2016topological} hold:
\begin{itemize}
    \item (algebra commutativity) all objects inside $\mcA$ have trivial twists,
    \item (separability) for all objects $a,b \in \mcC$, there is a partial isometry 
    $$\Hom(a,\mcA)\times \Hom(b,\mcA) \to \Hom(a \otimes b, \mcA).$$
\end{itemize}
In this and the next example, we follow the notations in \cite[Table 1]{cong2016topological}. The condensation yields the category $\mcB_\mcA=\Vec_{\mbbZ_2}$ where the deconfined part is itself. To prove this, we use the Frobenius reciprocity. While this example is done explicitly, a very similar process is used for all other examples. Frobenius reciprocity for pairs 
$$(X,Y)=\{(A,A),(A,B),(B,B),(A,C),(B,C),(C,C)\}$$
implies, respectively:
\begin{itemize}
    \item $D_\mcA(A)$ is a simple object as $\Hom_{\mcB_\mcA}(A,A)=1$ and we call it $\unit$,
    \item $D_\mcA(B)$ contains no copy of $\unit$,
    \item $D_\mcA(B)$ is actually a simple, we call it $e$,
    \item $D_\mcA(C)$ has a copy of $\unit$,
    \item $D_\mcA(C)$ has a copy of $e$,
    \item $D_\mcA(C)$ has only two simples as $\Hom_{\mcB_\mcA}(C,C)=2$, therefore it is $\unit+e$.
\end{itemize}
The fusion rules of $\unit,e$ can be easily discovered by using the Frobenius reciprocity for pairs like $(B\otimes B,A),(B\otimes B,B),$etc. The result is $\mcB_\mcA=\Vec_{\mbbZ_2}$.

Hence, the tensor functor is $D_\mcA(A)=\unit, D_\mcA(B)=e, D_\mcA(C)=\unit+e,$ and the forgetful functor can be computed from the condition of right adjoint and it is given by $E_\mcA(\unit)=A+C, E_\mcA(e)=B+C$. From these, it can be deduced $T_\mcA(X)=D_\mcA E_\mcA(X)=(2+e)\otimes X$. As discussed in \hyperref[3.2.4]{3.2.4}, $T_\mcA$, being compositions of adjunctions, is automatically a Hopf comonad by general nonsense and it inherits the self-dual HM structure coming from Hopf algebra structure (which is unique) on $2+e$ derived in \hyperref[5.1.2]{5.1.2}. 

\subsubsection{$D(S_3)$}\label{5.2.2}

Consider the case $\mcB=D(S_3)$ and the condensable algebra $\mcA=A+C$. The objects are denoted by $\{A,B,C,D,E,F,G,H\}$ where $\{A,B,C\}$ is the canonical image of $\text{Rep}(S_3)$ in $D(S_3)$. By using the framework in \hyperref[3.2.4]{3.2.4}, one derives the condensed category $\mcB_\mcA=\text{Rep}(D(\mbbZ_2)) \oplus \{X,Y\}$ with the following fusion rules for $X,Y$\footnote{The third author would like to thank C. Delaney for confirming the fusion rules of $X,Y$.}:
\begin{equation}
\begin{aligned}
mX=Y,mY=X,\psi Y=X,\psi X=Y,eY=Y,\\
X^2=\unit+e+Y,XY=m+\psi+X,Y^2=\unit+e+Y,
\end{aligned}
\end{equation}
where $\text{Rep}(D(\mbbZ_2))=\{\unit,e,m,\psi\}$ is the doubled $\mbbZ_2$ theory, $\{\unit,e\}$ is the image of the canonical embedding of $\text{Rep}(\mbbZ_2)=\Vec_{\mbbZ_2}$, and exchanging $m,e$ is a symmetry of the category. The maps $E_\mcA$ and $D_\mcA$ are given by
\begin{align}
E_\mcA: \unit \to A+C, e \to B+C, m \to D, \psi \to E, X \to D+E, Y \to F+G+H,\\
D_\mcA: A \to \unit , B \to e, C \to \unit+e, D \to m+X, E \to \psi+X,\  F,G,H \to Y.
\end{align}
Therefore, a straightforward calculation yields the Hopf comonad  $T_{\mcA}=D_\mcA E_\mcA$ to be acting by $(2+e) \otimes -$ on the objects $\unit,e,Y$ and $(\unit+Y) \otimes -$ on $m,\psi,X$. It is easy to see that   $\{X,Y\}$ can be replaced by $\{m+\psi,\unit+e\}$ as they are modules of $\unit+e$ and together with $\text{Rep}(D(\mbbZ_2))= D(\Vec_{\mbbZ_2})$ form the modules of $\unit \oplus (\unit+e)$ acting on $D(\Vec_{\mbbZ_2})$ as a HA with the structure in \hyperref[5.1.2]{5.1.2}.

\subsubsection{$D(\text{Ising})$ from the gauging of $SO(8)_1$ followed by a condensation}\label{5.2.3}
There is an $S_3$ symmetry on $SO(8)_1$ which can be gauged (as in \cite[6.2]{cui2016gauging}) giving the braided category $(SO(8)_1)_{S_3}^{\times,S_3}$ which has objects 
$$A,B,C,X, \ {}_{\alpha}X,\ {}_{\alpha^*}X,Y_+,Y_-,X^{++},X^{+-},X^{--},X^{-+}$$
with fusion rules described in the \hyperref[8.3]{Appendix 8.3}. One can condense $A+C$, where $A,B,C$ are the same as $A,B,C$ in $\text{Rep}(S_3)$. The fusion rules can be computed and it will give the double of Ising and three defects. The important observation to be made is that although condensation of a double is a double but the opposite is not true. We also have a change in the central charge which is not possible in the group gauging process.

\subsubsection{$\text{Haag}$}\label{5.2.4}
This example is about the famous Haagerup fusion category, denoted as $H_2$ in \cite{grossman2012quantum}, a UFC of rank 6 with three indecomposible module categories coming from \textit{Q-systems} $\unit, \unit+ \rho, \unit+\alpha+\alpha^*$. For Q-systems definition, see e.g. \cite[3.2]{bischoff2015tensor}. The corresponding bimodule fusion categories are denoted as $H_2, H_1, H_3$ in \cite{grossman2012quantum}. It is hoped that 
there is a category $\mcC$ which would make the sequence $\mcC \to H_3 \to H_2$ a gauging process where the second functor is the forgetful functor.

\subsubsection{$D(\text{Fib})$}\label{5.2.5}

This example is conjectured to be part of a sequence of examples which should build the sequence $D(\text{Haag}_k)$ defined later in \hyperref[6.3.3]{6.3.3}. for $k=0$ one recovers $D(\text{Fib})$. The condensable algebra $\mcA=\unit \unit + \tau \bar{\tau}$ gives the condensed category $\mcB_\mcA=\text{Fib}$, where $\unit$ is deconfined and $\tau$ is a defect. The action $T_\mcA=D_\mcA E_\mcA$ can be computed from
\begin{align}
E_\mcA: \unit \to \unit \unit + \tau\bar{\tau},\ \tau \to \unit\bar{\tau}+\bar{\tau}\unit+\tau\bar{\tau},\\
D_\mcA: \unit \unit \to \unit,\ \tau\bar{\tau} \to \unit+ \tau, \ \unit\bar{\tau}, \bar{\tau}\unit \to \tau
\end{align}
and is given by $(2+\tau) \otimes -$ which has the HA structure described in \hyperref[8.2]{Appendix 8.2}.

\section{Generalized Symmetries}\label{6}

In this section, we first define category symmetries and then outline a program to generalize the theory of extension and gauging to category symmetries.  We show that a category symmetry is essentially a tensor functor. 

We will work with module categories over a braided fusion category $\mcB$.  But the theory generalizes to fusion category simply by replacing module categories with bimodule categories.  In practice, we are really working with bimodule categories of $\mcB$, but only those coming from module categories using the braidings.  In the example of $\Vec_{\mbbZ_2}$, two out of the six bimodule categories are such bimodules.

\subsection{Category symmetry}\label{6.1}

Let $\mcF$ be a fusion category with a complete representative set of simple objects $\{X_i\}$, and $\mcB$ be a unitary braided fusion category.  The monoidal category of module categories $\Modc(\mcB)$ contains the Picard group $\Pic{\mcB}$, which is isomorphic to the group of braided tensor auto-equivalences $\Aut{\mcB}$ of $\mcB$.  $\Modc(\mcB)$  should be regarded as the group algebra of $\Pic{\mcB}$, and therefore the subcategory $\textrm{M}\Pic{\mcB}\subset \Endo(\mcB)$ generated by all functors from $\Modc(\mcB)$ should be regarded as the group algebra of $\Aut{\mcB}$.

The isomorphism between $\Aut{\mcB}$ and $\Pic{\mcB}$ underlies the physical relation between symmetries and defects.  A generalization of this isomorphism to HM symmetries will be a  characterization of the endo-functors M$\Pic{\mcB}$ that correspond to $\Modc(\mcB)$.  The existence of such a correspondence is obvious from the physical interpretation of unitary modular categories as anyon models.  Suppose $\mcM$ is an indecomposible module category over a unitary modular category $\mcB$, then $\mcM$ forms a gapped boundary between $\mcB$ and itself.  Forming the double $\mcB\boxtimes {\mcB}^{op}$ of $\mcB$, we can drag an anyon in the top layer $\mcB$ to the boundary, and then lift it to the bottom layer resulting a map $\theta_M: \mcB \rightarrow \mcB$.  The interesting mathematical question is to characterize all such resulting endo-functors M$\Pic{\mcB}$ of $\mcB$.

The theory of extension and gauging for group symmetries is a lifting problem. Starting with a group homomorphism $\rho: G \rightarrow \Aut{\mcB}$, we look for a lifting first to $\u{\rho}: \underline{G}\rightarrow \uPic{\mcB}$, and then $\uu{\rho}:\underline{\underline{G}}\rightarrow \uuPic{\mcB}$.  The generalization will be simply to replace $\Pic{\mcB}$ with M$\Pic{\mcB}$.
\begin{definition}\label{dfn6}
\begin{enumerate}
    \item A category symmetry of a fusion category $\mcC$ is a pair $(\mcF, \rho)$, where $\mcF$ is a fusion category and $\rho: \mcF \rightarrow \mcC$ a tensor functor.
   \item A Hopf symmetry of a fusion category $\mcC$ is a Hopf monad $T: \mcC\rightarrow \mcC$.
\end{enumerate}
\end{definition}

\begin{prop}\label{prop1}
A Hopf symmetry $T$ of a fusion category $\mcC$ gives a category symmetry $(\mcC^T, U)$ of $\mcC$, where $U\colon \mcC^T \to \mcC$ is the forgetful functor. Conversely, any category symmetry $(\mcF,\rho)$ arises in such as a way. 
\end{prop}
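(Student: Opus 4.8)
The plan is to read this proposition as the fusion-category packaging of the Hopf-monadicity dictionary sketched in \hyperref[3.2.3]{Section 3.2.3}, so that the two directions unpack its two halves. For the forward direction, let $T$ be a (semisimple linear) Hopf symmetry of $\mcC$. The underlying monad makes $\mcC^T$ abelian, and the comonoidal structure $T_2(M,N) = H_{M,N}\,T(id_M \otimes \eta_N)$ built from the fusion operator equips $\mcC^T$ with a tensor product: modules $(M,r)$ and $(N,s)$ are sent to $M \otimes N$ with action $(r \otimes s)\,T_2(M,N)$, with monoidal unit $(\unit, T_0)$. The heptagon axiom \eqref{equ:HM_heptagon_axiom} is exactly the coherence making this product associative, and the compatibility conditions \eqref{equ:HM_triangle_axioms} make $(\unit,T_0)$ a unit; invertibility of $H$ --- the genuinely \emph{Hopf}, rather than merely bimonad, hypothesis --- supplies duals, so that with semisimplicity $\mcC^T$ is fusion \cite[Remark 6.2]{bruguieres2007hopf}. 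By construction the forgetful functor $U\colon \mcC^T \to \mcC$ carries the tensor product of modules to that of their underlying objects on the nose, hence is strong monoidal with identity structure maps, and it is exact and faithful since it only forgets the action. Thus $(\mcC^T,U)$ is a category symmetry.

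For the converse, let $(\mcF,\rho)$ be a category symmetry, so $\rho\colon \mcF \to \mcC$ is a tensor functor of fusion categories. As both categories are rigid and semisimple, $\rho$ admits a left adjoint $G$ (see \cite[1.3]{bruguieres2011exact}); put $T = \rho G$. Since $\rho$ is strong monoidal with a left adjoint, $T$ is a bimonad, and because $\mcC$ is rigid this bimonad is automatically a Hopf monad \cite{bruguieres2011hopf}. The comparison functor $K\colon \mcF \to \mcC^T$, $X \mapsto (\rho X,\ \rho(\varepsilon_X))$, intertwines $\rho$ with $U$ by construction, and Beck's monadicity theorem shows that $K$ is an equivalence: $\rho$ is conservative, being a faithful exact functor of semisimple categories, while the relevant coequalizers are split, hence absolute and automatically preserved. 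Transporting the strong-monoidal data of $\rho$ along $K$ makes it an equivalence of fusion categories with $U \circ K \cong \rho$, which is exactly the assertion that $(\mcF,\rho)$ arises as $(\mcC^T,U)$.

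The main obstacle is the converse, and within it the two ``upgrade'' steps of the monadicity argument: confirming that the fusion operators of $T = \rho G$ are invertible, so $T$ is Hopf and not merely a bimonad, and that $K$ is an equivalence of \emph{monoidal} rather than merely abelian categories. Both hinge on $\rho$ being strong monoidal and conservative, and in the unitary fusion setting semisimplicity trivializes the coequalizer hypotheses of Beck's theorem. The remaining work is bookkeeping --- tracking the strong-monoidal data of $\rho$ across the adjunction to build the Hopf structure on $T$ and the monoidal structure on $K$ --- and since this is precisely the content cited in \hyperref[3.2.3]{Section 3.2.3}, I expect the proof to assemble those references rather than demand a fresh computation.
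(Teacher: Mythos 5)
Your proof is correct and takes essentially the paper's route: the paper's own proof simply declares the statement obvious from the definitions in Section 3.1 together with the existence of adjoints of tensor functors \cite[1.3]{bruguieres2011exact}, and your argument just spells out the monadicity and Hopf-adjunction details that the paper delegates to \cite{bruguieres2007hopf,bruguieres2011hopf}. One precision you should fix: the fusion operators of $T=\rho G$ are invertible because the comonoidal adjunction $G\dashv\rho$ relates \emph{two} rigid categories (equivalently because $\mcC^T\simeq\mcF$ is rigid), not merely because $\mcC$ is rigid --- a bimonad on a rigid category need not be Hopf, as $B\otimes-$ for a non-Hopf bialgebra $B$ in $\Vec$ shows.
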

\begin{proof}
Obvious from definitions in \hyperref[3.1]{3.1} and the fact that tensor functors on fusion categories have adjoints (see e.g. \cite[1.3]{bruguieres2011exact}).
\end{proof}

\subsection{Extension}\label{6.2}
In this part, we provide evidence for the \textit{existence} of an extension theory based on Hopf symmetry, a generalization of the one based on group symmetry. Condensation as in \hyperref[5.2]{5.2} would also be contained as examples of this theory. This part is conjectural in nature and does not follow the theorem-proof format.

First we see how the group symmetry case can be recovered in the language of HMs. Most importantly for $T_G=\oplus T_g :\mcC \to \mcC$, why does the extension have a $G$-grading? The $G$-grading is the same as a $\Vec_G$ grading. Therefore, it is important to understand how to \textit{recover} $\Vec_G$ from $T_G$. 

The obvious solution seems to look at the modules of $T_G$, but this gives the equivariantization $\mcC^G$ which, in spite of having $Rep(G)$ inside, does not contain $\Vec_G$ canonically. Most importantly, we would like something that has itself a grading ``\textit{equal}'' to $\Vec_G$.

The answer turns out to be the $T_G(\unit)-$\textit{co}modules. Indeed, just like any HM, $T_G(\unit)$ happens to be a coalgebra with the coalgebra structure $(T(\unit),T_2(\unit,\unit), T_0)$ \cite{bruguieres2007hopf}. As $T_G(\unit)=\oplus T_g(\unit)=\oplus \unit_g$ and the coproduct $T_2(\unit,\unit)\colon T_G(\unit) \to T_G(\unit) \otimes T_G(\unit)$ can be decomposed to $\unit_g \to \unit_g \otimes \unit_g$, we have $\mcC^{co-T_G(\unit)}=\oplus_g \ \mcC^{co-T_g(\unit)}$.

Therefore, for a general HM $T$, a good starting point would be to consider the decomposition of $T(\unit)$ to simple coalgebras $T(\unit)=\oplus m_i$ for the grading of the extension 
$$\mcC^\times_T=\oplus \mcC_{m_i}.$$
For a Hopf comonad, one has to consider the decomposition of $T(\unit)$ to simple algebras as $T(\unit)$ is an algebra instead of a coalgebra. 

Now we examine what each component $\mcC_{m_i}$ should be. For the case of $T_G$, the $m_i\,'$s correspond to elements of $G$, and $\mcC_{m_i} = \mcC_{g}$ is known to be formed by objects of $\mcC$ fixed by $T_g$. Since $m_i = T_g(\unit)\cong 1$, each $m_i$-comodule is simple a copy of $\mcC$. Hence, we need to impose more restrictions on the objects that are allowed in the extension.  A plausible generalization would be to somehow seek for a decomposition of the HM $T$ into a sum of functors $T_{m_i}$, and require $\mcC_{m_i}$ to consists of objects $X$ of $\mcC$  that are not only $m_i$-comodules but also satisfy $T_{m_i}(X) \cong m_i \otimes X$.

Let us see how this shapes the story for group and Hopf algebra symmetry. Notice that for groups, $m_i=\unit_g$ and by construction we have the decomposition $T = \oplus T_g$. Thus the condition of being an $m_i$-comodule is trivial, while the nontrivial condition is that of $T_g(X) \cong \unit_g\otimes X = X$, which  exactly means $X$ is fixed by $T_g$. 

This is in contrast to $T(-)=\mcH \otimes -$ for a Hopf algebra $\mcH$ where $T(\unit)=\mcH=\oplus m_i$ decomposes to simple coalgebras $m_i$. One can obviously guess what $T_{m_i}\,'$s should be; they are defined as $T_{m_i}(-)=m_i\otimes -$ and $T = \oplus T_{m_i}$.   Therefore the condition $T_{m_i}(X)\cong m_i\otimes X$ is trivial, while that being an $m_i$-comodule is not. Hence for the HM $T(-)=\mcH \otimes -$, the extension $\mcC^\times_T$ should be essentially given by $\mcH$ comodules. 

Notice that for the Hopf comonads such as the ones found in the condensation examples, one needs to look at decompositions of $T(\unit)$ into simple algebras. As the HM $T=\mcH\otimes -$ is a  Hopf comonad as well, one can as well consider algebra decomposition of $\mcH$ as the grading for another extension. More generally, for self-dual HMs there are two ways of considering the extension. 

For example, the Hopf comonad $T=(2+e)\otimes - $, as demonstrated in \hyperref[5.1.2]{5.1.2}, has the decomposition $\unit \oplus (\unit+e)$, and as mentioned in \hyperref[5.2.2]{5.2.2}, this gives the extension given by $\mcH=2+e$ modules:
$$\mcC^\times_T = \mcC_\unit \oplus \mcC_{\unit+e} = D(\mbbZ_2)\oplus \{X,Y\}.$$

How do we generalize the story to all HMs $T$? First, one notes that $(T(X),T_2(\unit,X))$ is always a $T(\unit)$-comodule \cite{bruguieres2007hopf}. Therefore, it must decompose to $m_i$ comodules. One can then define $T(X)= \oplus T_{m_i}(X)$ as being the decomposition to $m_i$ comodules. Therefore, there is a definition for $T_{m_i}$: a certain functor from $\mcC$ to $\mcC^{co-m_i}$ given by the composition of the functor $U:\mcC \to \mcC^{co-T(\unit)}$ where $U(X)=(T(X),T_2(\unit,X))$, followed by the projection onto $\mcC^{co-m_i}$. 

The next step is a crucial observation derived from the identities of the fusion operator $H^l_{\unit,X}:T^2(X) \to T(\unit)\otimes T(X)$. This is an isomorphism by the definition of a HM, and further the fusion operator is known to satisfy \cite[Prop. 2.6]{bruguieres2011hopf}:
$$(T_2(X,Y) \otimes id_{T(Z)})H^l_{X\otimes Y,Z}=(id_{T(X)}\otimes H^l_{Y,Z})T_2(X,Y\otimes T(Z)).$$
Replacing $X,Y$ by $\unit$ and $Z$ by $X$, implies that $H^l_{\unit,X}$ is a $T(\unit)$-comodule map between $(T^2(X),T_2(\unit,T(X))$ and $(T(\unit)\otimes T(X),T_2(\unit,\unit)\otimes id_{T(X)})$. Therefore, as $T(\unit)$ decomposes, we obtain an isomorphism between their decomposition into $m_i$-comodules given by:
$$T_{m_i}(T(X)) \xrightarrow{\cong} m_i \otimes T(X).$$
The above is \textit{always} true, and it shows that the idea of fixed point generalization as $T_{m_i}(Y) \cong m_i \otimes Y$ makes sense, as all $Y=T(X)$ satisfy this property. This can be also checked for $T=T_G$ where indeed $T_g$ fixes any $T_G(X)$.

Notice the above discussion was for all \textit{left} $T(\unit)$-comodules, and the same can be said about right $T(\unit)$-comodules by using the right fusion operator.

Treating $\unit$ as a coalgebra with the apparent coalgebra structure, the map $\eta_{\unit}\colon \unit \to T(\unit)$ becomes an injective coalgebra morphism by the compatibility conditions between $T_2,\ T_0$, and $\eta$ \cite{bruguieres2007hopf} (replacing $X, Y$ with $\unit$):
\begin{align}
    \label{equ:T2_eta_compatibility}
    T_2(X,Y) \eta_{X\otimes Y}=\eta_X \otimes \eta_Y \quad \text{and} \quad T_0 \eta_{\unit} = 1.
\end{align}
This means $\unit$ is a simple subcoalgebra inside $T(\unit)$, implying that in the decomposition of $T(\unit)$ into simple coalgebras one of the $m_i\,'$s can be chosen to be $\unit$. We {\it identify} $\unit$ with its image in $T(\unit)$ under $\eta_{\unit}$.  It is conjectured (see the next paragraph) that the associated functor $T_{\unit}$ is the identity functor, in which case we have $\mcC_{\unit} = \mcC$ and therefore  $\mcC^\times_T$ is an \textit{extension} of $\mcC$. 

\iffalse
Among simple comodules of $T(\unit)$, there is a special comodule given simply by $\eta(\unit)$, a copy of the unit inside $T(\unit)$. The reason why this is inside the $T(\unit)$ decomposition comes from the compatibility of $T_2$ and $\eta$ given by the identity \cite{bruguieres2007hopf}:
$$ T_2(X,Y) \eta_{X\otimes Y}=\eta_X \otimes \eta_Y.$$
Using the above for $X,Y=1$ implies $\eta(\unit)$ is indeed a coalgebra inside $T(\unit)$, and it is simple as $\eta(\unit) \cong \unit$. Hence we have a category $\mcC_\eta$ which are fixed points of a certain map $T_\eta$ from $\mcC$ to itself as $\mcC^{co-\eta(\unit)} \cong \mcC$. It is conjectured that this map $T_\eta$ is in fact the identity and therefore $\mcC^\times_T$ is an \textit{extension} of $\mcC$ as it contains a copy of $\mcC \cong \mcC_{\eta}$. 
\fi

We claim that $T_{\unit}(X)$ contains $X$. First of all, any $X$ is trivially a comodule of $\unit$. The embedding $\eta_{\unit}: \unit \to T(\unit)$ induces a $T(\unit)$-comodule structure on $X$, where the comodule map is given by $\eta_{\unit} \otimes id_{X}$. Then it is direct to see that the map $\eta_X: X \to T(X)$ is a $T(\unit)$-comodule morphism by the identity $T_2(\unit, X) \eta_X = \eta_{\unit} \otimes \eta_{X}$ which is obtained from the first identity in Eq. \ref{equ:T2_eta_compatibility} with $X, Y$ replaced by $\unit, X$, respectively. $\eta_X$ is clearly also injective. Hence, $X$ is a sub comodule of $T(X)$. Moreover, by the same identity as above, $T_2(\unit, X)$ sends the image of $X$ to $\unit \otimes T(X) \subset T(\unit) \otimes T(X)$. But $\unit \otimes T(X)$ is also where $T_{\unit}(X)$ is exclusively sent by $T_2(\unit,X)$ due to the comodule decomposition. Hence $X \subset T_{\unit}(X)$ and it is conjectured to be equal.

One important aspect in the extension theory concerns the fusion rules. For the group case, the fusion rules correspond to possible liftings of an action $G \to \text{Aut}_\otimes(\mcC)$ to $\underline{G} \to \underline{\text{Aut}_\otimes(\mcC)}$. Notice that a lifting is presumed from the \textit{beginning} as one states that $T_G$ is a HM coming from  $T_{-}: \underline{G} \to \underline{\text{Aut}_\otimes(\mcC)}$. Therefore it will not be surprising to see that there is always a fusion rule for $\mcC^\times_T$ although there might be more than one possible fusion rule just like in the group case. The Hopf algebra case should be similar, where the extension is given by the $T$-comodule category which a fusion category as $T$ is also a Hopf comonad.

As for the general case, the fusion $\mcC_{m_i}\otimes \mcC_{m_j}$ is likely to be given by the multiplication     $\mu:T^2 \to T$ whose restriction to $T_{m_i}(T_{m_j}(X))$ should give an indication of how $\mcC_{m_i}$ and $\mcC_{m_j}$ must fuse. Putting it more abstractly, if $T(\unit)$ happens to be a bialgebra and hence has a monoidal comodule category, one seeks a monoidal functor from the category of $T(\unit)$-comodules to the monoidal category $\Bimodc(\mcC)$, as all $\mcC_{m_i}$ are $\mcC$-bimodules by fusion from left and right.
 
As for the associator, in the group case it is given by some vanishing obstruction in the fourth cohomology and classified by the third cohomology $H^3(G,U(1))$, and there is likely a similar story for the general case. It is known that deformations of the associator are given by the third Yetter-Davydov cohomology (see e.g. \cite[Chap. 7.22]{etingof2016tensor}). 

Applying this reasoning to an extension of $\Vec$ given by an ordinary Hopf algebra $\mcH$, we should be looking at $H^i(\mcH\text{-}comod)$ which for cocommutative hopf algebras turn out to be $HH^i(\mcH,\mbbC)$, the Hochschild cohomology of $\mcH$. As an example, notice that the trivial group symmetry in the case of $\Vec$ coincides with that of the action of the cocommutative Hopf algebra $\mbbC[G]$. Hence, taking $\mcH=\mbbC[G]$, this results in the cohomology $HH^i(\mbbC[G],\mbbC)$ which is known to be the cohomology of $G$ (see e.g. \cite[Chap. 7.22]{etingof2016tensor}), making the story compatible with the group extension. 

A final piece of the puzzle is the extension of $T$ to a HM $T^\times$ on $\mcC^{\times}$. For the group case, it is known to be unique \cite{etingof2009fusion}, while it might not be so for the Hopf algebra case. 

Indeed, as mentioned before, for a Hopf algebra action $T(-)=\mcH\otimes -$ as a HM (Hopf comonad), the extension $\mcC^\times_T$ is nothing but the $T$-comodule(-module) category itself. Generally for a self dual HM $T$, a canonical extension of $T$ to its module category $\mcC^{T}$ is always possible by $T^\times : (M,r) \to (T(M),T(r))$. This can be easily shown to be a self-dual HM and also obviously an \textit{extension} of $T$ acting on the copy of $\mcC$ inside $\mcC^{T}$ given by modules $(X,\epsilon_X)$ with $\epsilon$ the counit in \hyperref[dfn3]{\textbf{Definition 3}}. 

Now consider the condensation process in \hyperref[5.2.2]{5.2.2}, where we switch to Hopf \textit{co}monad $T_\mcA$ acting on $\mcB_\mcA$. From its appearance, $T_\mcA$ does not seem to be an extension of $T=(2+e)\otimes (-)$ until one considers the replacement of $X,Y$ by $m+\psi,\unit+e$. But $T_\mcA$ is certainly not the canonical extension given above. Indeed for the canonical extension $T^\times$ acting on module $(M,r)=(m,\epsilon_m)$, one obtains $T^\times((M,r))=(2m+\psi,\epsilon_{2m+\psi})$ which is not the result of $T_\mcA$ acting on $m$ by $\unit+Y$ even at the object level. In fact, it is not hard to prove that $T^\times=(2+e)\otimes$. This shows the possibility of different extensions of $T$ itself. 

There are many questions for which there are hints for the answers and are yet to be explored. We will explore a general HM extension theory in future works.

\subsection{Gauging applications} \label{6.3}

\subsubsection{A generalization of Fib to all primes $\Fib_p$}\label{6.3.1}

Regarding $\Fib$ as a fusion category, we consider its generalization to a sequence of near-group categories \cite{siehler2003near} whose fusion rules would be denoted as $\Fib_p$ for all primes $p$, with $p=1$ understood as Fib.

There are $p+1$ isomorphic classes of simple objects denoted as $\mbbZ_p$ and $X$ with the non-group fusion rule:

$$g\otimes X=X\otimes g=X, \quad  X\otimes X=   (\bigoplus\limits_{g \in \mbbZ_p} g) \oplus p\, X.$$

For $p=2$, this is the well-studied fusion category $\frac{1}{2}E_6$, and for $p=3$, this is the Izumi-Xu fusion rule; see also \cite[p. 589]{evans2014near} for more examples and a classification of near-group categories.

\begin{opq}\label{opq6.1}

Is there a HM on $\VecZp$ whose extension realizes $\Fib_p$ for each prime $p$?
\end{opq}

\subsubsection{A generalization of the Haagerup category $\Haag_p$}\label{6.3.2}

Fib as a fusion category fits into another potential sequence of fusion categories whose fusion rules will be denoted as $\Haag_p$.  $\Haag_p$ has $2p$ classes of simple objects denotes as $\alpha^i, i=0,1,...,p-1,$ and $\rho_i, i=0,1,..,p-1$, where $\alpha^0=\unit, \rho_0=\rho$. The $\alpha^i\,'$s obey $\mbbZ_p$ fusion rule. The non-group fusion rules are determined by: 

$$\alpha^i\otimes \rho=\rho_i=\rho \otimes \alpha^{p-i}, \quad \rho^2=\unit \oplus \sum_{i=0}^{p-1} \rho_i.$$

For $p=2$, this is the fusion rule of $\textrm{PSU}(2)_6$, and for $p=3$, this is the Haagerup fusion rule.

\begin{opq}\label{opq6.2}
Is there a HM on $\VecZp$ with an extension that realizes $\Haag_p$ for each prime $p$?
\end{opq}

\subsubsection{The Doubled Haagerup category $D(\text{Haag}_p)$}\label{6.3.3}
The hypothetical modular category $D(\text{Haag}_p)$, defined in \cite{evans2011exoticness}, for all odd prime $p$ is of rank $p^2+3$ with anyons $\unit,b, a_h, d_l$ with $1\leq h \leq \frac{p^2-1}{2}, 1\leq l \leq \frac{p^2+3}{2}$ of quantum dimensions $1, p\delta +1, p\delta+2, p\delta$, and $\delta=\frac{p+\sqrt{p^2+4}}{2}$ satisfying $\delta^2=1+p\delta$.
It is known to exist for $p\leq 13$ \cite{evans2014near}.  

\begin{prop}\label{prop2}
The object $\mcA=\unit+b$ of DHaag has a condensable algebra structure.
\end{prop}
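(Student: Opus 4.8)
The plan is to follow the template of \hyperref[5.2.1]{5.2.1} and verify the three object-level conditions for $\mcA=\unit+b$ to be a connected \'{e}tale algebra in the sense of Definition~\ref{dfn5}, using the criteria recalled from \cite[Thm.~3.7, 3.8]{cong2016topological}: connectedness, trivial twists (commutativity), and the partial-isometry condition (separability). Connectedness is immediate, since $b$ is simple and $b\not\cong\unit$ forces $\Hom(\unit,\mcA)=\Hom(\unit,\unit)\oplus\Hom(\unit,b)=\mbbC$.

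For commutativity it suffices, by \cite[Thm.~3.7]{cong2016topological}, that every simple summand of $\mcA$ have trivial twist, so the real task is to show $\theta_b=1$; I would extract this, together with self-duality $b\cong b^{*}$, from the modular data of $D(\Haag_p)$ recorded in \cite{evans2011exoticness} (rigorous at least for the range $p\le 13$ in which the category is known to exist). Self-duality is what makes the would-be algebra close up: for simple self-dual $b$ one has $\dim\Hom(\unit,b\otimes b)=\dim\Hom(b^{*},b)=\dim\Hom(b,b)=1$, so $\unit$ occurs in $b\otimes b$ with multiplicity exactly one, supplying the copairing $b\otimes b\to\unit$ needed for the multiplication on $\mcA=\unit\oplus b$.

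The substantive step is the partial-isometry (separability) condition. Since $\Hom(X,\mcA)\ne 0$ only for $X\in\{\unit,b\}$, the condition reduces to the single nontrivial pair $(b,b)$: one must produce a partial isometry $\Hom(b,\mcA)\times\Hom(b,\mcA)\to\Hom(b\otimes b,\mcA)$, which amounts to checking that $b\otimes b$ contains $\unit\oplus b$ with the correct fusion multiplicities $N^{\unit}_{bb}=1$, $N^{b}_{bb}\ge 1$, and that the resulting structure constants on $b\otimes b\to\unit\oplus b$ satisfy the norm and associativity constraints governed by $F^{bbb}$. A useful sanity check, and a guide to the conceptual route, is the dimension bookkeeping: $\FPdim(\mcA)=1+(p\delta+1)=p\delta+2$ and $\FPdim\big(D(\Haag_p)\big)=\FPdim(\Haag_p)^{2}=p^{2}(p\delta+2)^{2}$, so the deconfined (local-module) part of the condensation has dimension $\FPdim\big(D(\Haag_p)\big)/\FPdim(\mcA)^{2}=p^{2}=\FPdim\big(D(\mbbZ_p)\big)$, pointing to $D(\Haag_p)$ condensing onto $D(\mbbZ_p)$.

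I expect the main obstacle to be establishing these associativity and norm constraints uniformly in $p$, because the full $F$- and $R$-data of $D(\Haag_p)$ are available only for small primes, so directly solving the structure-constant equations cannot by itself settle all $p$. The route I would pursue to circumvent this is conceptual rather than computational: to identify $\mcA$ as a canonical connected \'{e}tale algebra in $\mcZ(\Haag_p)=D(\Haag_p)$ whose local-module condensation is $\mcZ(\VecZp)=D(\mbbZ_p)$, arising from the pointed subcategory $\VecZp\subset\Haag_p$ (for which the dimension count above becomes an identity rather than a numerical coincidence); commutativity, associativity, and separability would then follow from general theory with no explicit modular data required. Separability is in any case not the real obstruction: in the unitary setting one may take the splitting to be a rescaling of $m^{\dagger}$, so a connected commutative algebra with the above multiplication is automatically separable.
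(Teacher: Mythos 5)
Your proposal is correct in substance but takes a genuinely different route from the paper. The paper's proof is a two-line Morita-transfer argument: DHaag is weakly monoidally Morita equivalent to $\Haag\boxtimes\Haag^{op}$ by M\"uger's theorem, the algebra $\unit+\rho\boxtimes\rho^{*}$ (built from the Q-system object $\rho$, i.e., from the subfactor datum that produces the Haagerup categories in the first place) is condensable, and its image under the correspondence is $\unit+b$. You instead, after correctly diagnosing that brute-force verification of associativity cannot work uniformly in $p$ (the categories are only known to exist for $p\le 13$, and explicit $F$-data only for small $p$), attach a canonical connected \'etale algebra $A$ to the fusion subcategory $\VecZp\subset\Haag_p$ generated by the invertibles, in the style of Davydov--M\"uger--Nikshych--Ostrik, so that the local modules of $A$ realize the center of the subcategory. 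Both proofs are thus of the form ``identify $\unit+b$ with a canonically constructed \'etale algebra,'' and both are sketches at a comparable level of rigor; the paper's seed buys a direct identification of the image via the known structure of $\mcZ(\Haag)$, while your seed is visible purely from the fusion rules of $\Haag_p$ and buys, in addition, the statement the paper has to make separately after the proposition --- that the deconfined part of the condensation is $D(\mbbZ_p)$.

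Two small repairs would make your route self-contained. First, promote your dimension count from ``sanity check'' to identification: the general theory gives $\FPdim(A)=\FPdim(\Haag_p)/\FPdim(\VecZp)=p\delta+2$ with $A$ connected, so $A=\unit\oplus R$ with $\FPdim(R)=p\delta+1$; since the simple dimensions of $D(\Haag_p)$ are $1$, $p\delta+1$, $p\delta+2$, $p\delta$, the only decompositions of $p\delta+1$ are $b$ and $\unit\oplus d_l$, and the latter contradicts connectedness, hence $A\cong\unit+b$ as objects. Without this step you have produced a condensable algebra of the right dimension, not yet a condensable structure on the object $\unit+b$, which is what the proposition asserts. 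Second, the pointed subcategory could a priori carry a nontrivial associator, in which case the local modules form a twisted double $D^{\omega}(\mbbZ_p)$ rather than $D(\mbbZ_p)$; this is harmless for the proposition itself (the algebra structure on $\unit+b$ exists regardless), but it should be flagged where you assert that the condensed theory is $D(\mbbZ_p)$ on the nose.
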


\begin{proof}
DHaag is weakly monoidal Morita equivalent to $\textrm{Haag}\boxtimes \textrm{Haag}^{op}$ \cite{mueger03},
and $\unit+\rho \boxtimes \rho^*$ is a condensable algebra, which is sent to $\unit+b$.
\end{proof}

Consider the condensable algebra $\mcA=\unit+b$, then we have\footnote{The third author thanks T. Gannon for computing the fusion rules of the condensed category of DHaag.} 
$$D_\mcA(\unit)=\unit, D_\mcA(b)=\unit+X, D_\mcA(a_h)=X+\alpha_{i,j}+\alpha_{i,j}^*, D_\mcA(d_l)=X,$$
where $\alpha_{i,j}$ form $D(\mbbZ_p)$, and $X^2=D(\mbbZ_p)+ p^2 X$.  So $\mcC=D(\mbbZ_p) \oplus\{X\}$ with deconfined $\mcD=D(\mbbZ_p)$.  
It follows the HM is 
$$T_\mcA(\unit)=2+X, T_\mcA(\alpha_{ij})=\alpha_{i,j}+\alpha_{i,j}^*+X, T_\mcA(X)=D(\mbbZ_p)+ (p^2+2)X.$$
So $T_\mcA(a)=a+a^*+X\otimes a$ is a HM on $D(\mbbZ_p)\oplus \{X$\}, and the question in general is what $T$ is on $D(\mbbZ_p$)?

\begin{opq}\label{opq6.3}
Can $D(\text{Haag}_p)$ be realized through gauging a Hopf monad symmetry on $D(\mbbZ_p)$? 
\end{opq}

\subsection{The tensor product of modules of $D(\mbbZ_N)$}\label{6.4}
An important topic to study for Hopf monad extension is that of the monoidal category $\mcC^\times_T=\oplus \mcC_{m_i}$ formed by the bimodules $\mcC_{m_i}$. As a large class of examples involve the double of abelian groups $\mbbZ_N$, we have decided to include a study of the product of the modules of $D(\mbbZ_N)$ when viewed as a bimodule using the braiding structure. It is important to note that most of the work is already done in \cite[Proposition 3.19]{etingof2009fusion}, where one characterizes the bimodules of $\Vec_A$ for abelian group $A$ and computes their tensor product over $\Vec_A$. Indeed, in this case $A=\mbbZ_N \times \mbbZ_N$, and we only need to understand which bimodule correspond to the module given a bimodule structure by braiding. Although in this section, \cite[Proposition 3.19]{etingof2009fusion} is used directly without any explanation, in \hyperref[8.1]{Appendix 8.1}, we form a table of $\Vec_{\mbbZ_2}$-bimodule monoidal category by going explicitly through the construction of the fusion of bimodules and give more details of the calculations in this section.

We refer to the discussion at the beginning of \hyperref[5]{section 5}. Consider a module $\mcM=\mcM(A,\rho)$ of $\mcC=D(\mbbZ_N)$ coming from the algebra $A \subset \mbbZ_N \times \mbbZ_N$ with a 2-cocycle $\rho \in H^2(A,\mbbC)$. Notice for abelian groups 2-cocycles and skew-symmetric bicharacters form the same group (see e.g. \cite[p.12]{etingof2009fusion}). We will assume that $\rho$ is trivial to simplify calculations but it is easy to do the general case as well. The right action of $\mcC=D\mbbZ_N$ on $\mcM$ is given by
$M . X := X . M$, i.e. just the left action, which is actually fusion as $\mcM$ is really inside $\mcC$. Recall the associativity constraint for the left action denoted by
$$m_{X,Y,M}:= (X\otimes Y) . M \to X.(Y.M).$$
The braiding comes in the associativity constraints for the right action, given by:
$$n_{M,X,Y} := M . (X \otimes Y) \to (M . X) . Y$$
and defined as $c_{X,Y}$ as the left side is $(X \otimes Y).M$ and the right side is $Y.(X.M)$. Of course, we are using the fact that the left action is given by fusion and the associator $\alpha$ of $\mcC$, which is the associator $m_{X,Y,M}$ of the left action, are all trivial. Otherwise, for the general case of braided $\mcC$, the right action is still defined as $M.X := X.M$, but with associativity constraint:
$$n_{M,X,Y} = m_{Y,X,M}(c_{X,Y} \otimes id_M).$$
The final structure map, the \textit{middle} associativity constraint
$$b_{X,M,Z} : (X.M).Z \to X.(M.Z)$$
is $m_{X,Z,M}c_{Z,X}m^{-1}_{Z,X,M}$ for the general case of a braided $\mcC$. The reason that these structure maps $(n)b$ satisfy the commuting diagrams constraints for the definition of (right module)bimodule \cite[Def. 7.1.7]{etingof2016tensor} can be derived by using the similar diagrams for the definition of the braided category $\mcC$ and the $\mcM$ as a module of $\mcC$. Although in the case of $D\mbbZ_N$, one can see that $m$ is the associator $\alpha$ of $\mcC$, and all diagrams are formed by braiding and the associator which commute due to the axioms satisfied by $\mcC$ itself.

By standard module category facts, $\mcM$ as a bimodule of $D\mbbZ_N$, is equivalent to a left module of $D\mbbZ_N \boxtimes D\mbbZ_N^{op}$. The associator for this action is given by:
\begin{align}
l_{X_1\boxtimes Y_1, X_2\boxtimes Y_2,M}&:=((X_1 \otimes X_2) \boxtimes (Y_1 \otimes_{op} Y_2)).M \to (X_1 \boxtimes Y_1) .((X_2 \boxtimes Y_2).M) 
\end{align}
i.e, 
\begin{align}
&((X_1 \otimes X_2).M).(Y_2 \otimes Y_1) \to (X_1.((X_2.M).Y_2).Y_1 \\ 
&l_{X_1\boxtimes Y_1, X_2\boxtimes Y_2,M}=b_{X_1,X_2.M,Y_2}n_{X_1.(X_2.M),Y_2,Y_1}m_{X_1,X_2,M}
\end{align}
In our case this gives $c_{Y_2,X_1}c_{Y_2,Y_1}$. The module $\mcM$ is given by the algebra $A$, and we need to see which algebra $B$ in $D(\mbbZ_N) \boxtimes D(\mbbZ_N)=\Vec_{\mbbZ_N^2 \oplus \mbbZ_N^2}$ (as a fusion category, which is why the superscript ``$op$'' is omitted), and which 2-cocycle $\psi$ on $B$ would give us the module on $\Vec_{\mbbZ_N^2 \oplus \mbbZ_N^2}$ corresponding to $\mcM$ as a bimodule on $D(\mbbZ_N)$. Notice that the fusion category structure is what determines the category of (bi)modules, and that is why one can ignore braiding and omit ``$op$''. The algebra $B$ is essentially the stabilizer of a simple object like $[A]$, the trivial coset, inside $\mcM$ (\cite[2.7]{etingof2009fusion}):
\begin{align}
(X \otimes Y).[A] = (X.[A]).Y=Y.(X.[A])= y+x+[A]
\end{align}
where the summation on the right is by looking at what elements X,Y correspond to in the abelian group. To have the coset fixed, it is obvious that one needs to have $x+y \in A$. Therefore, the algebra $B$ is given by all those pairs $(x,y) \in \mbbZ_N^2\oplus \mbbZ_N^2$ with $x+y \in A$. The 2-cocyle is derived from the associator by restricting to $B$ (as in \cite[2.7]{etingof2009fusion}). Therefore $\psi((x_1,y_1),(x_2,y_2))=c_{Y_2,X_1}c_{Y_2,Y_1}=c_{Y_2,X_1 \otimes Y_1}$. This is indeed a bicharacter on $B$ as $c_{a,b}=e^{\frac{2\pi i}{N}a_2b_1}$ where $a=(a_1,a_2) \in \mbbZ_N^2$ and similarly for $b=(b_1,b_2)$. Hence, 
$$\psi((x_1,y_1),(x_2,y_2))=e^{\frac{2\pi i}{N}y_{22}(x_{11}+y_{11})}.$$
The next step is to take this 2-cocycle and make it into a \textit{skew-symmetric} bicharacter, as only then one can use the theorem necessary to find the result of the bimodule product. Adding a suitable 2-coboundary $d\epsilon$ makes this change possible, where $\epsilon(g)=\psi(g,g)^{\frac{1}{2}}$ giving us
$$(d\epsilon)(g_1,g_2)=\psi(g_1,g_1)^{\frac{1}{2}}\psi(g_2,g_2)^{\frac{1}{2}}\psi(g_1+g_2,g_1+g_2)^{-\frac{1}{2}},$$
notice in general that any 2-cocycle $\omega$ (for abelian groups) are bicharacters which implies $\omega(2g,2g)=\omega(g,g)^4$ and the above calculation can be done for any $\omega$ to make it a skew-symmetric bicharacter. The new skew-symmetric bicharacter is
$$\psi(g_1,g_2)d\epsilon(g_1,g_2)=\psi(g_1,g_2)\psi(g_1,g_1)^{\frac{1}{2}}\psi(g_2,g_2)^{\frac{1}{2}}\psi(g_1+g_2,g_1+g_2)^{-\frac{1}{2}}$$
$$=\psi(g_1,g_2)^\frac{1}{2}\psi(g_2,g_1)^{-\frac{1}{2}}$$
which, by abuse of notation,
$$\psi((x_1,y_1),(x_2,y_2))=e^{\frac{2\pi i}{2N}(y_{22}(x_{11}+y_{11})-(x_{21}+y_{21})y_{12})}.$$
If the bicharacter $\rho$ on $A$ was not trivial, then one would have to include in the associator $l$ the product $\rho(X_1,X_2)\rho(Y_1,Y_2)$ and the rest would be similar. 

As an application, consider the module $\mcM$ on $D(\mbbZ_2)$ given by the algebra $A=0\times \mbbZ_2$ and the trivial bicharacter. Then $\mcM$ as a bimodule on $D(\mbbZ_2)$ is a module on $\Vec_{\mbbZ_2^2 \oplus \mbbZ_2^2}$ with the corresponding algebra $B=\{(x,y)| x+y \in 0\times \mbbZ_2\}$ which is 
\begin{align}
(0\times \mathbb{Z}_2,0\times \mathbb{Z}_2) \cup (1 \times \mathbb{Z}_2,1 \times \mathbb{Z}_2).
\end{align}
Further the bicharacter $\psi$ is also trivial as $x+y \in 0\times \mbbZ_2$ implies $x_{11}+y_{11}=x_{21}+y_{21}=0$. These together with \cite[Proposition 3.19]{etingof2009fusion}, imply
$$\mcM \boxtimes_{D(\mbbZ_2)} \mcM=2\mcM.$$
The fusion structure of bimodules is not necessarily \textit{symmetric} with respect to the algebras. For example, for $A=  \mbbZ_2 \times 0$ giving a bimodule $\mcM$,
$$\mcM \boxtimes_{D(\mbbZ_2)} \mcM=2\mcM((\mathbb{Z}_2 \times 0,\mathbb{Z}_2 \times 0),\psi_{trivial}).$$
where the product gives the bimodule corresponding to the algebra $(\mathbb{Z}_2 \times 0,\mathbb{Z}_2 \times 0) \subset \mbbZ_2^2 \oplus \mbbZ_2^2$ with the trivial bicharacter. Details are in the \hyperref[8.1]{Appendix 8.1}.

\subsection{Applications to physics}\label{6.5}

The relation between symmetry and defect in the group symmetry case should be extended to the generalized symmetry and defect case.  We will leave such a physics theory as \cite{BBCW14,cui2016gauging} to the future.

\section{Modular categories as a tetra-category}\label{7}

Regarding gauging HMs as a new method to construct modular categories, we use gauging to organize modular categories for a structure theory with an eye towards a classification.

\subsection{Weak Hopf monad}\label{7.1}

From the definition of category symmetry, a fusion category $\mcF$ is a symmetry of $\mcC$, then $\mcC$ is also a module category of $\mcF$ by using the tensor functor to induce the action: $Y . X := \rho(Y)\otimes X, Y \in \mcF, X \in \mcC$. Since $\Vec$ is not a Fib module category, it follows that Fib cannot be a symmetry of $\Vec$.  Therefore, we cannot regard $D(\text{Fib})$ as a gauging of a Fib symmetry on $\Vec$.

There is a generalization of HMs to weak Hopf monads \cite{bohm2011weak}. Then for a weak Hopf algebra $\mcH$, the functor $T_\mcH$ is a weak Hopf monad on $\Vec$ and its modules can realize \textit{any} fusion theory.  Hence, a weaker notion of category symmetry based on weak Hopf monads could make Fib to be a category symmetry of $\Vec$ as it is expected in example \hyperref[5.2.5]{5.2.5}. We will leave such a theory to the future and use such a possibility to speculate on a structure theory of modular categories. 

\begin{definition}\label{dfn7}
\begin{enumerate}
    \item A weak category symmetry of a fusion category $\mcC$ is a pair $(\mcF, \rho)$, where $\mcF$ is a fusion category and $\rho: \mcF \rightarrow \mcC$ a strong Frobenius functor.
   \item A weak Hopf symmetry of a fusion category $\mcC$ is a weak Hopf monad $T: \mcC\rightarrow \mcC$.
\end{enumerate}
\end{definition}

\subsection{Simple modular category}\label{7.2}

\begin{definition}\label{dfn8}

A modular category $\mcB$ is simple if $\mcB$ is prime (not a Deligne tensor product) and contains no non-trivial normal (or condensable) algebras.
\end{definition}

\subsubsection{Simple pointed categories}\label{7.2.1}
Classification of all simple MCs is certainly a hard problem. We could start to find all simple pointed MCs. These are coming from abelian groups $G$ with some non-degenerate quadratic forms $q: G \to \mathbb{R}/\mathbb{Z}$ which are classified in \cite[Thm. 5.4]{galindo2016solutions} where $q$ satisfies $\theta_a=e^{2\pi i q(a)}, \ \forall a \in G$. One needs to first look for those $q$'s which do not have a kernel, hence as $q(a)=c(a,a)$ (following the notations of \cite{galindo2016solutions}) we must have $c(a,a) \neq 0, \forall 0 \neq a \in G$. Those would be the $(G,q)$ that can not have a commutative algebra and therefore, a condensable algebra. 
This is easy following the classification result for the prime abelian anyonic theory. First we list all classes here as presented in \cite[Thm. 5.4]{galindo2016solutions}:
\begin{enumerate}
    \item If $p\neq 2$ and $\epsilon=\pm 1$, 
    $\omega_{p,k}^\epsilon$ denotes the abelian anyon with fusion rules given by $\mathbb{Z}/p^k\mathbb{Z}$ and abelian 3-cocycle 
    $(0,c)$, where $c(x,y)=\frac{uxy}{p^k}$, for some  $u\in \mathbb{Z}^{>0}$ with $(p,u)=1$ and $\left(\frac{2u}{p}\right)=\epsilon$, where $\left(\frac{}{p}\right)$ is the Legendre symbol.
    \item If $\epsilon\in (\mathbb{Z}/8\mathbb{Z})^{\times}$, 
    $\omega_{2,k}^\epsilon$ denotes the abelian anyon with fusion rules given by $\mathbb{Z}/2^k\mathbb{Z}$ and abelian 3-cocycle 
    
    \begin{align*} c(x,y)=\frac{uxy}{2^{k+1}}, \  \  \     \  \  \omega(x,y,z)=  \begin{cases} \frac{1}{2}, \qquad &\text{if } x=y=z=2^{k-1},\\
    0. \quad  &\text{ otherwise.}
    \end{cases}
    \end{align*}
    for some  $u\in \mathbb{Z}^{>0}$ with $ u \equiv \epsilon \Mod{8}$. The abelian anyons $w_{2,k}^1$ and $w_{2,k}^{-1}$ are defined for all $k\geq 1$ and $w_{2,k}^{5}$ and $w_{2,k}^{-5}$ for all $k\geq 2$.
    
    \item $E_k$ denoted the abelian anyon with fusion rules given by $\mathbb{Z}/2^k\mathbb{Z}\oplus \mathbb{Z}/2^k\mathbb{Z} $ and abelian 3-cocycle $(0,c)$, where $c\in \Hom(\mathbb{Z}/2^k\mathbb{Z}\oplus \mathbb{Z}/2^k\mathbb{Z},\mbbR/\mbbZ)$ is defined by
    
    \begin{align*}
    c(\vec{e}_i,\vec{e}_j)=\begin{cases} 0, \qquad &\text{if } i=j,\\
    \frac{1}{2^k}, \quad  &\text{if } i\neq j. 
    \end{cases}
    \end{align*}
    \item $F_k$ denoted the abelian anyon with fusion rules given by $\mathbb{Z}/2^k\mathbb{Z}\oplus \mathbb{Z}/2^k\mathbb{Z} $ and abelian 3-cocycle $(0,c)$, where $c\in \Hom(\mathbb{Z}/2^k\mathbb{Z}\oplus \mathbb{Z}/2^k\mathbb{Z},\mbbR/\mbbZ)$ is defined by
    
    \begin{align*}
    c(\vec{e}_i,\vec{e}_j)=\begin{cases} \frac{1}{2^{k-1}}, \qquad &\text{if } i=j,\\
    \frac{1}{2^k}, \quad  &\text{if } i\neq j. 
    \end{cases}
    \end{align*}
\end{enumerate}
We note that for the first class $\sum_{j=0}^{p-1} jp^{k-1}$ can be easily checked to be a nontrivial condensable algebra if $k>1$. For $k=1$ there are no trivial twists except for the unit object hence this gives simple modular categories. The second class is similar to the first in this regard except that $k$ needs to be greater than two to have trivial twists and a condensable algebra $\sum_{j=0}^{p-1} jp^{k-1}$.

The third class has always the condensable algebra $\sum_{j \in E_k} j$ as all twists are trivial. This is in contrast with the fourth class where all $F_k$s are simple modular categories without any trivial twist.
\subsubsection{$SU(2)_k$}\label{7.2.2}
Another set of examples would be to find the simple modular categories among $\mathfrak{su}(2)_k$. The simples are labelled by the common notation $0,\ldots,k$. Here we know that the only possible normal algebra is $0+k$ as $\theta_a=e^{\frac{2\pi i a}{4}}$ and that can only be $1$ when $k \equiv 0 \pmod 4$. Otherwise for $k$ odd, the category splits to two MCs $semi \times PSU(2)_k$ where $semi$ is precisely $0,k$ which by itself form an MC. The case which remains to prove that does not split to a product and is therefore simple, is $k\equiv 2 \pmod 4$.  This follows from the ADE classification of modular invariants of $SU(2)_k$ (e.g. see \cite{KO02,ostrik2002module}).

\begin{opq}\label{opq7.1}

\begin{enumerate}
\item Is it true that simple modular categories are  determined by modular data? Note that this is false for general modular categories \cite{MD17peter,MD18parsa,MD18DT}.

\item Is it true the number of simple modular categories for a fixed rank has a polynomial growth as the rank goes to infinity?  Note that this is false for all modular categories \cite{RF16}.

\item Can we define a zeta function for all modular categories using simple modular categories as analogues of primes?

\end{enumerate}
\end{opq}

\subsection{The tetra-category of modular categories}\label{7.3}

We can imagine to use all modular categories to define a state sum invariant of $4$-manifolds $X$.  Fixing a generic celluation $X$ into $4$-cells, we color each $4$-cell by a modular category and the $3$-face between two of them by the fusion category resulting from condensing an algebra or the $T$-crossed extension; $2$-simplices to be colored by bi-modules etc.  What resulted is essentially a promotion of the tri-category of all fusion categories to a tetra-category, which does not yet have an algebraic definition.  Therefore, we will not attempt to make the idea rigorous in any way.

Heuristically, the tetra-category has: 

\begin{enumerate}

\item 0-morphism or object: modular categories

\item 1-morphism: condensable algebras or fusion categories

\item 2-morphism:  bimodules between fusion categories

\item 3-morphism: module functors between bimodules

\item 4-morphism: natural transformations between module functors

\end{enumerate}

Then a structure theory of modular categories is the same as an explicit description of this tetra-category.

\vspace{1cm}
\noindent\textbf{Acknowledgements.} \quad S.X.C. acknowledges the support from the Simons Foundation. Z.W. is partially supported by NSF grants DMS-1411212 and  FRG-1664351.

\section{Appendices}\label{8}

\subsection{Module categories over $\Vec_{\mbbZ_2}$}\label{8.1}
We want to calculate the monoidal category given by the bimodules of $\Vec{\mathbb{Z}_2}$. First, let us review the background material of \textbf{Proposition 3.19} of \cite{etingof2009fusion} described in section 3.6, 2.7. 

For $G$ a general finite group, the indecomposable semisimple module category of $\Vec_{G}$ are labelled by $\mathcal{M}(H,\psi)$ where $H$ is a subgroup and $\psi\in H^2(H,k^\times)$. There is an identification by conjugation of element of $G$. For abelian groups which have no nontrivial conjugation, there is no identification. Recall that the rank of the module category corresponding to the pair $(H,\psi)$ is equal to the number of right cosets. 

From now on, groups like $E$ are assumed to be abelian and therefore, 2-cocycles are the same as skew-symmetric bicharacters, i.e. biadditive maps $b : E \times E \to k^\times$ where $b(x,x)=1$. We shall assume $k=\mathbb{C}$.

For $N \subset E$ a subgroup and $\psi$ a bicharacter on $E$, denote the orthogonal complement of $N$ with respect to $\psi$ by $N^\perp$ to be the subgroup $\{n\in E| \psi(x,n)=1,\  \forall x \in N\}$. Notice if $\psi$ is nondegenerate then $N^\perp$ can be identified with $E/N$ and $|N|.|N^\perp|=|E|$. Also, the mere existence of a nondegenerate bicharacter implies $|E|$ being a square. In our case of study, most of the time $\psi=1$ (as $H^2$ is trivial) and therefore $N^\perp=E$.

We will need to sometimes consider how a bicharacter can be pushed forward via a map. Consider abelian groups $A,B$ with group homomorphism $\phi : B \to A$, and skew symmetric bicharacter $\xi$ on $B$. Take $K=\ker \phi$ and $K^\perp$ the orthogonal complement with respect to $\xi$. Then it can be shown that $\xi$ can be (in the most obvious way) pushed forward to define a skew-symmetric bicharacter on $H=\phi(K^\perp)$.

\textbf{Tensor product of bimodule categories.} Let $A_1,A_2,A_3$ be finite abelian groups (for example $\mathbb{Z}_2$). We would like to take the product of a $\Vec_{A_1}-\Vec_{A_2}$ module with a $\Vec_{A_2}-\Vec_{A_3}$ module and compute what the $\Vec_{A_1}-\Vec_{A_3}$ bimodule is. First, notice that a $\Vec_{A}-\Vec_{B}$ bimodule is simply an $A \oplus B^{opp}-$module but $B$ is abelian so an $A\oplus B-$module. As semisimple indecomposable modules of $\Vec_{G}$ have been classified, we have to compute
$$\mathcal{M}(H,\psi) \boxtimes_{\Vec_{A_2}} \mathcal{M}(H',\psi')$$
for $H \subset A_1\oplus A_2, H' \subset A_2 \oplus A_3$ and two skew-symmetric bicharacters $\psi,\psi'$ on $H,H'$. 

First define $H \circ  H'$ to be subgroup of elements $(a_1,-a_2,a_2,a_3)$ in $H \oplus H'$. Let $H \cap H' \subset A_2$ be their intersection inside $A_2$, i.e. $(H \cap A_2) \cap (H' \cap A_2)$; for example if $A_i=\mathbb{Z}_2$ and $H=\Delta$ the diagonal then $H \cap H'=\{0\}$, the trivial group as $\Delta \cap A_2=\{0\}$. Next, consider $H \cap H'$ as a subgroup of $H \circ H'$ by the anti-diagonal embedding $x \to (0,-x,x,0)$. Notice the bicharacter $\psi \times \psi'$ on $H \oplus H'$ can be restricted to $H \circ H'$. Hence, we can take the orthogonal complement $(H \cap H')^\perp$ of $H \cap H'$. Finally, as $H \cap H'$ is living inside the kernel of the projection $\phi: A_1 \oplus A_2 \oplus A_2 \oplus A_3 \to A_1\oplus A_3$, we can push-forward its bicharacter and get the subgroup $H''=\phi((H \cap H')^\perp) \subset A_1\oplus A_3$, with the bicharacter $\psi''=\phi_*(\psi\times\psi')$. We can finally state the \textbf{Proposition 3.19} in \cite{etingof2009fusion}:
\begin{prop}\label{prop3}
$$\mathcal{M}(H,\psi) \boxtimes_{\Vec_{A_2}} \mathcal{M}(H',\psi')=m.\mathcal{M}(H'',\psi'')$$
where 
$$m=\frac{|H \cap H'|.|(H \cap H')^\perp|.|A_2|}{|H|.|H'|}.$$
\end{prop}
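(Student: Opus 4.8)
The plan is to present both bimodule categories as module categories over the pointed fusion categories attached to the product groups, realize the relative tensor product as an external Deligne product followed by a balancing, and then read off the triple $(H'', \psi'', m)$ from the surviving grading data. Since $A_2$ is abelian, a $\Vec_{A_1}$-$\Vec_{A_2}$-bimodule category is the same as a left module category over $\Vec_{A_1 \oplus A_2}$, so I would write $\mathcal{M}(H,\psi) \simeq \text{Mod}_{\Vec_{A_1 \oplus A_2}}(R_H)$, where $R_H = \bigoplus_{h \in H}\delta_h$ is the $\psi$-twisted group algebra of $H$ as a connected separable algebra object (its simple modules are indexed by cosets, so the rank is $[A_1 \oplus A_2 : H]$, independent of $\psi$); likewise $\mathcal{M}(H',\psi') \simeq \text{Mod}_{\Vec_{A_2 \oplus A_3}}(R_{H'})$. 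The external Deligne product is then $\text{Mod}(R_H \boxtimes R_{H'})$ over $\Vec_{A_1 \oplus A_2 \oplus A_2 \oplus A_3}$, i.e. the module category attached to the subgroup $H \oplus H' \subset A_1 \oplus A_2 \oplus A_2 \oplus A_3$ with the external bicharacter $\psi \times \psi'$.

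The balancing over $\Vec_{A_2}$ identifies the right $A_2$-action on the first factor with the left $A_2$-action on the second. In the product-group picture this is restriction of the grading to the fiber product along the two middle copies of $A_2$, with the sign dictated by the $\Vec_{A_2}^{op}$ identification, which is exactly the subgroup $H \circ H'$ of elements $(a_1,-a_2,a_2,a_3)$. I would then observe that the balancing further trivializes the diagonal $A_2$-direction: objects whose gradings differ by the anti-diagonally embedded $H \cap H'$ become isomorphic, and the surviving grading group is obtained by deleting the two middle coordinates via the projection $\phi$. Which characters survive this collapse is governed by orthogonality against $H \cap H'$, so the surviving subgroup is precisely $H'' = \phi\bigl((H \cap H')^\perp\bigr)$, which is the push-forward construction recalled in Appendix 8.1.

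The technical heart, and the step I expect to be the main obstacle, is to verify that the bicharacter surviving the balancing is exactly $\psi'' = \phi_*(\psi \times \psi')$. This requires assembling the associativity constraints of the left and right module structures — in particular the way the braiding of $\Vec_{A_2}$ enters the middle associativity map, as in the $n$ and $b$ of Section 6.4 — and checking that their combined effect on $H \circ H'$ is the restriction of $\psi \times \psi'$, and that passing to the orthogonal complement of $H \cap H'$ and pushing forward along $\phi$ is both well-defined (the form being trivial in the collapsed direction) and skew-symmetric on $H''$. I would reduce this to the push-forward lemma of Appendix 8.1, so that no separate cocycle computation is needed. Finally, the multiplicity $m$ follows from a rank count for the balanced product: one compares the number of simple objects surviving the balancing with $[A_1 \oplus A_3 : H''] = |A_1|\,|A_3|/|H''|$, the rank of the single indecomposable $\mathcal{M}(H'',\psi'')$. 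Using the fiber-product identity $|H \circ H'| = |H|\,|H'|/|H_{A_2} + H'_{A_2}|$, where $H_{A_2}, H'_{A_2}$ are the images of $H, H'$ in the shared $A_2$, together with the orthogonal-complement relation between $|H \cap H'|$ and $|(H \cap H')^\perp|$ for the restricted bicharacter, this simplifies to the stated $m = |H \cap H'|\,|(H \cap H')^\perp|\,|A_2|/(|H|\,|H'|)$. The subgroup combinatorics and the count are essentially forced once the algebra-object framework is in place; it is the identification of the balancing with the orthogonal-complement-and-push-forward recipe that carries the real work.
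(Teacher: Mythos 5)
You should first be aware that the paper contains no proof of this proposition: it is stated as a verbatim quotation of Proposition 3.19 of \cite{etingof2009fusion}, and Appendix 8.1 only recalls the notation needed to state it ($H\circ H'$, the anti-diagonal copy of $H\cap H'$, orthogonal complements, and the push-forward lemma) before applying it to Table 1 and the $D(\mbbZ_2)$ computations. So the comparison can only be made against the cited source; your outline --- realizing each $\mathcal{M}(H,\psi)$ as modules over a twisted group algebra, forming the external Deligne product, and imposing the $\Vec_{A_2}$-balancing --- is the natural strategy, and your subgroup combinatorics (including $|H\circ H'|=|H|\,|H'|/|H_{A_2}+H'_{A_2}|$) are correct.

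As a proof, however, your sketch has two genuine gaps. The more serious one is the multiplicity count: you compute $m$ by counting ``simple objects surviving the balancing,'' i.e.\ by identifying simples of the external product whose gradings differ by the anti-diagonal $H\cap H'$ and discarding those failing an orthogonality condition. This mechanism is wrong in principle, because passing to the relative product can \emph{split} simple objects, not merely identify or kill them. The paper's own Table 1 provides a counterexample: $M_{1,2}\boxtimes_{\Vec_{\mbbZ_2}}M_{1,2}=M_2$, where both factors have rank one, so the external Deligne product has a single simple object, while $M_2=\mathcal{M}(\Delta,1)$ has rank two; no identification-and-selection procedure applied to one simple object can output two. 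The multiplicity must instead be extracted from the decomposition of the algebra (equivalently, the functor category) presenting the relative product, which is exactly where the factors $|H\cap H'|$ and $|(H\cap H')^\perp|$ enter; note also that the ``orthogonal-complement relation'' you invoke between these two orders does not exist in general, since $\psi\times\psi'$ restricted to $H\circ H'$ is usually degenerate (already for $M_{1,1}\boxtimes_{\Vec_{\mbbZ_2}} M_{1,1}$ one has $|H\cap H'|\,|(H\cap H')^\perp|=16\neq 8=|H\circ H'|$), so the final simplification to the stated $m$ is asserted rather than forced. The second gap is the bicharacter: you defer the identification $\psi''=\phi_*(\psi\times\psi')$ to the push-forward lemma of Appendix 8.1 ``so that no separate cocycle computation is needed,'' but that lemma only guarantees that a given skew-symmetric bicharacter on $(H\cap H')^\perp$ descends along $\phi$; verifying that the balancing constraints actually produce $(\psi\times\psi')|_{(H\cap H')^\perp}$ as the relevant $2$-cocycle is precisely the computation being deferred, and it cannot be absorbed into the lemma.
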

Let us study the case of $\Vec_{\mathbb{Z}_2}$ bimodules when $A=A_i=\Vec_{\mathbb{Z}_2}$. There are six pairs $(H,\psi)$ for subgroups of $A \oplus A$. They give six bimodules $M_{i,j}$ where $i$ will denote the rank (which is $2|A|/|H|=4/|H|$) and $j$ will be present if there are more than one instance of rank $i$:
\begin{enumerate}
    \item $M_2=\mathcal{M}(\Delta,1)$ for the diagonal subgroup,
    \item $M_{1,1}=\mathcal{M}(A \oplus A,1)$ with the trivial bicharacter,
    \item $M_{1,2}=\mathcal{M}(A \oplus A,\xi)$ with the nontrivial bicharacter,
    \item $M_{4}=\mathcal{M}(1,1)$ with the trivial subgroup,
    \item $M_{2,1}=\mathcal{M}(A \oplus \{0\},1)$,
    \item $M_{2,2}=\mathcal{M}(\{0\} \oplus A,1)$.
\end{enumerate}
$\xi$ can be computed. Notice $2x=0, \forall x \in A \oplus A$ and $\xi(x,x)=1$ (skew-symmetry), hence
\begin{align}
\xi(x,y)\xi(x,0)=\xi(x,y+0) \implies \xi(x,0)=1, \text{similarly } \xi(0,x)=1 
\end{align}
\begin{align}
\xi(x,y)^2=\xi(x,2y)=\xi(x,0)=1 \implies \xi(x,y)= \pm 1
\end{align}
\begin{align}
\xi(x,y)=\xi(x,y)\xi(y,y)=\xi(x+y,y) \implies \xi(x,y)=\xi(x,x+y)=\xi(x+y,y)
\end{align}
\begin{align}
\xi(x,y)=\xi(x+y,y)=\xi(x+y,x+2y=x)=\xi(x+y,x)=\xi(y,x) \implies \xi \text{ symmetric} 
\end{align}
There are 16 elements in $A \oplus A$, but from the above, one needs to only determine $\xi$ for the three possible pairs from the three elements $a=(1,1),b=(1,0),c=(0,1) \in A$. But $\xi(a,b)=\xi(a,a+b)=\xi(a,c)=\xi(a+c,c)=\xi(b,c)=t$. So all their values are equal. Therefore, as $t$ can be only $\pm 1$, for $\xi$ to be nontrivial, we choose $t=-1$. This can be checked to satisfy all relations of a skew symmetric bicharacter.

From here, finding the multiplication table of the bimodule category is only straightforward calculation using what was described before the proposition. It is especially easy to do as in $\mathbb{Z}_2$, we always have $a=-a$ and the negative signs can be ignored. Further, for the $M_{i,j}$ with trivial character which gives orthogonal complement $N \subset E$ always equal to the whole group $E$, $(H'',\psi'')$ in the proposition turns out to be simply $(\phi(H \circ H'),1)$, that is the subgroup $(a_1,a_3)$ for which there exists $a_2$ such that $(a_1,a_2) \in H, (a_2,a_3) \in H'$. Notice the construction of $H \circ H'$ already shows the category is not symmetric. $M_2$ is the unit as it can be observed from the \hyperref[table1]{Table 1}. The category can also be seen to be not rigid therefore certainly not a fusion category. This could have been guessed from proposition where tensor of two bimodules is a single one and if it is a fusion category then every element is invertible, the category is pointed and therefore a group and there should be no multiplicities at all. This is very unlikely and probably not true in most cases. In brief, the category is a semisimple tensor category.

\begin{table}\label{table1}
\caption{$A\boxtimes_{\Vec{\mathbb{Z}_2}} B$ where $A$ is from left column}
\label{tab:DS3-fusion}
\begin{tabular}{|c|c|c|c|c|c|c|}
\hline $\boxtimes_{\Vec{\mathbb{Z}_2}}$ &$M_2$ &$M_{1,1}$ &$M_{1,2}$ &$M_4$ &$M_{2,1}$ &$M_{2,2}$\\ \hline
$M_2$ &$M_2$&$M_{1,1}$ &$M_{1,2}$ &$M_4$ &$M_{2,1}$ &$M_{2,2}$\\ \hline
$M_{1,1}$ &$M_{1,1}$ & $2M_{1,1}$ &$M_{2,1}$& $      M_{2,1}$ &$2M_{2,1}$ &$M_{1,1}$\\ \hline
$M_{1,2}$ &$M_{1,2}$ &$M_{2,2}$ &$M_2$& $M_{2,1}$ &$M_4$ & $M_{1,1}$\\ \hline
$M_4$ &$M_4$ &$M_{2,2}$ &$M_{2,2}$ & $2M_4$ & $M_4$ & $2M_{2,2}$ \\ \hline
$M_{2,1}$ &$M_{2,1}$ &$M_{1,1}$ &$M_{1,1}$ & $2M_{2,1}$ & $M_{2,1}$ & $2M_{2,2}$   \\ \hline
$M_{2,2}$ &$M_{2,2}$ &$2M_{2,2}$ &$M_{4}$ & $M_{4}$ & $2M_{4}$ & $M_{2,2}$\\  \hline
\end{tabular}
\end{table}
~
\\
\textit{$D(\mbbZ_2)$ bimodules.} As promised in \hyperref[6.4]{6.4}, we would like to detail the computations used to compute the tensor product of the select bimodules of $D(\mbbZ_2)$.
 
$M_{2,1}$ and $M_{2,2}$ corresponding to $A \times 0$ and $0 \times A$ are actually the $D(\mbbZ_2)$-modules which were made into bimodules using the braiding structure. Therefore, let us calculate the bimodule products $M_{2,i}^2$. We start with $i=2$. Recall that the bicharacter is always 
$$\psi((x_1,y_1),(x_2,y_2))=e^{\frac{2\pi i}{2N}(y_{22}(x_{11}+y_{11})-(x_{21}+y_{21})y_{12})}.$$
The algebra $B$ is $x\oplus y \in \mathbb{Z}_2^2 \oplus \mathbb{Z}_2^2$ such that $x+y \in 0 \times \mathbb{Z}_2$. This means it has $16$ elements
\begin{align}
(0\times \mathbb{Z}_2,0\times \mathbb{Z}_2) \cup (1 \times \mathbb{Z}_2,1 \times \mathbb{Z}_2).
\end{align}
On $B$, the bicharacter is trivial as $x_{11}+y_{11}=x_{21}+y_{21}=0$. To compute $\mathcal{M}(B,\psi_{trivial})  \boxtimes_{\Vec_{\mathbb{Z}_2^2}} \mathcal{M}(B,\psi_{trivial})$, we look at the intersection of $B \cap B$ inside the second component of $\mathbb{Z}_2^2 \oplus \mathbb{Z}_2^2$ and it is $0 \times \mathbb{Z}_2$. The number of elements inside $B \circ B$ is exactly $16$, with elements given by 
$$(0\times \mathbb{Z}_2,a,a,0\times \mathbb{Z}_2) \cup (1 \times \mathbb{Z}_2,b,b,1 \times \mathbb{Z}_2), \ where \ a\in 0\times  \mathbb{Z}_2, b \in 1 \times \mathbb{Z}_2.$$
This is equal to $(B \cap B)^\perp$ as the character is trivial. Hence, the multiplicity formula from the proposition gives
\begin{align}
m=\frac{2 . 16 . 4}{8 . 8}=2
\end{align}
and the algebra $H''$ is the image of $(B \cap B)^\perp=B \circ B$ in $\mathbb{Z}_2^2 \oplus \mathbb{Z}_2^2$ when we forget the middle two elements, which means we get $B$ back. Hence, 
$$M_{2,2} \boxtimes_{D\mathbb{Z}_2} M_{2,2}=2M_{2,2}.$$

The procedure is quite similar for the case $M_{2,1}$, where the only subtlety is when we want to compute the orthogonal complement $(B \cap B)^\perp$ as the bicharacter $\psi$ is no longer trivial. We have $B \cap B=\{ (0,a,a,0), a \in \mathbb{Z}_2\times 0\}$ and we have to find all $(x,y,y,t) \in B \circ B$ such that 
$$\psi((0,a),(x,y))\psi((a,0),(y,t))=1$$
for any element in $B \cap B$. Recall that 
$$B \circ B=\{(x,y,y,t) | x+y, y+t \in A=\mathbb{Z}_2 \times 0\}.$$
It can be seen easily that for $a=(1,0)$ and any element $(x,y,y,t)$ with $x,y,t \in \mathbb{Z}_2 \times 1$ the above will be $-1$. In fact,
$$\psi((0,a),(x,y))\psi((a,0),(y,t))=e^{\frac{\pi i}{2}(y_2+t_2)a_1}$$
and otherwise, when $x,y,t \in \mathbb{Z}_2 \times 0$, it is equal to one. Therefore the orthogonal complement has $8$ elements which are $x,y,t \in \mathbb{Z}_2 \times 0$. Hence the multiplicity can be seen to be $1$. The algebra of the result $\mathcal{M}(H,\xi)=M_{2,1} \boxtimes_{D\mathbb{Z}_2}M_{2,1}$ will be the image of the orthogonal complement as we drop the middle two components which is $H=(\mathbb{Z}_2 \times 0,\mathbb{Z}_2 \times 0)$. As for its bicharacter $\xi$, it must be the image of $\psi \times \psi$ under the projection, and the way to compute it is to take any preimage and compute $\psi \times \psi$. It can be seen to be trivial by taking preimages $(x,0,0,t),(x',0,0,t') \in (B \cap B)^\perp$ where $(x,t),(x',t') \in H \implies x,t,x',t' \in \mathbb{Z}_2 \times 0$. We compute $\psi((x,0),(x',0))\psi((0,t),(0,t'))$ and both are $1$. All in all 
\begin{align}
M_{2,2} \boxtimes_{D\mathbb{Z}_2} M_{2,2}&=2M_{2,2} \\
M_{2,1} \boxtimes_{D\mathbb{Z}_2} M_{2,1}&=\mathcal{M}((\mathbb{Z}_2 \times 0,\mathbb{Z}_2 \times 0),\psi_{trivial}).
\end{align}

\subsection{HM on $2+\tau$}\label{8.2}
We present a Hopf algebra structure on $2+\tau$ in the Fibonacci category $\Fib$. 

Throughout the section, $\golden = \frac{\sqrt{5}+1}{2}$ is the golden ratio. Some other notations are in order. For any simple objects $a,b,c$ such that $c$ is a subobject of $a \otimes b$, we arbitrarily choose and fix $V_{c}^{ab} \in \Hom(c, a \otimes b), \ V_{ab}^c \in \Hom(a \otimes b, c)$ such that $V_{ab}^c \circ V_{c}^{ab} = id_c$. The graphical representations for $V_{c}^{ab}$, $V_{ab}^c$, and $id_c$ are given in Figure \ref{fig:Vabc}.
\begin{figure}
\centering
\begin{tikzpicture}[baseline={([yshift=-.5ex]current bounding box.center)},scale=\scale, every node/.style={scale=\scale}]
\begin{scope}
\draw (0,0) -- (-1,1)node[above]{$a$};
\draw (0,0) -- (1,1)node[above]{$b$};
\draw (0,0) -- (0,-1)node[below]{$c$};
\end{scope}

\begin{scope}[xshift = 5cm]
\draw (0,0) -- (-1,-1)node[below]{$a$};
\draw (0,0) -- (1,-1)node[below]{$b$};
\draw (0,0) -- (0,1)node[above]{$c$};
\end{scope}

\begin{scope}[xshift = 9cm]
\draw (0,-1)node[below]{$c$} -- (0,1)node[above]{$c$};
\end{scope}
\end{tikzpicture}
\caption{Left: $V_{c}^{ab}$; Middle: $V_{ab}^c$; Right: $id_c$.}\label{fig:Vabc}
\end{figure}
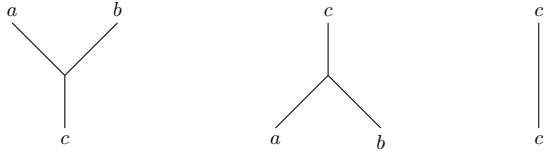
For an object $X = \sum_{a}X_a \, a$ where $X_a$ represents the multiplicities of $a$,  denote by $a_i$ the $i$-th copy of $a$ in $X$, $i = 1, \cdots, X_a$. If $X_a = 1$, we simple write $a$ instead of $a_1$. Then a morphism $f$ from $X \otimes Y$ to $Z$ is fully specified by   
\begin{align*}
f &= \sum_{a,b,c} \sum_{i,j,k}f_{abc}^{ijk} 
\begin{tikzpicture}[baseline={([yshift=-.5ex]current bounding box.center)},scale=\scale, every node/.style={scale=\scale}]
\draw (0,0) -- (-1,-1)node[below]{$a_i$};
\draw (0,0) -- (1,-1)node[below]{$b_j$};
\draw (0,0) -- (0,1)node[above]{$c_k$};
\end{tikzpicture}
\end{align*}
for some scalars $f_{abc}^{ijk}$. Usually, the terms with vanishing $f_{abc}^{ijk}$ will be dropped out from the above equation. Morphisms from $Z$ to $X \otimes Y$, or from $X$ to $Y$ are specified in a similar way in terms of $V_{c}^{ab}\,'$s or $id_c\,'$s.

With assistance of Mathematica, we conclude that there is a unique Hopf algebra structure on $2+\tau$. Here is a brief summary. The Hopf algebra is not commutative, nor cocommutative. It has a two sided integral on which the value of the counit  is not zero. Hence it is semisimple. However, the antipode has order $10$. Thus, semisimplicity of Hopf algebras in $\Fib$ is not equivalent to the condition that the antipode has order two, unlike complex finite dimensional Hopf algebras in $\text{Vec}_{\mathbb{C}}$. $2+\tau$ has four irreducible modules whose underlying objects are given by $\unit,\  \tau,\  \tau$ and  $\unit \oplus \tau$. These modules form the category $\DFib$.  we present the Hopf algebra structure maps as well as the module structures below.

The structure maps $M, \ \Delta, \ \eta, \ \epsilon, \ S$ on $2+\tau$ are defined, respectively, in Equations \ref{equ:fib_M}, \ref{equ:fib_delta}, \ref{equ:fib_eta}, \ref{equ:fib_epsilon}, \ref{equ:fib_S}, where in Equation \ref{equ:fib_delta}, we define 
\begin{align*}
V = \left(-\frac{\golden^{-2}}{2} + \frac{5^{\frac{1}{4}}\golden^{-\frac{1}{2}}}{2}i\right), \qquad \Un = \frac{i \sqrt[4]{5 \left(-9+4 \sqrt{5}-i \sqrt{1525-682 \sqrt{5}}\right)}}{2^{3/4}}
\end{align*}

\begin{align}
\label{equ:fib_M}
\begin{tikzpicture}[baseline={([yshift=-.5ex]current bounding box.center)},scale=\scale, every node/.style={scale=\scale}]
\draw (0,0) -- (-1,-1)node[below]{$\mathbf{1}_{1}$};
\draw (0,0) -- (1,-1)node[below]{$\mathbf{1}_{1}$};
\draw (0,0) -- (0,1)node[above]{$\mathbf{1}_{1}$};
\end{tikzpicture}
+ 
&\begin{tikzpicture}[baseline={([yshift=-.5ex]current bounding box.center)},scale=\scale, every node/.style={scale=\scale}]
\draw (0,0) -- (-1,-1)node[below]{$\mathbf{1}_{1}$};
\draw (0,0) -- (1,-1)node[below]{$\mathbf{1}_{2}$};
\draw (0,0) -- (0,1)node[above]{$\mathbf{1}_{2}$};
\end{tikzpicture}
+ 
&\begin{tikzpicture}[baseline={([yshift=-.5ex]current bounding box.center)},scale=\scale, every node/.style={scale=\scale}]
\draw (0,0) -- (-1,-1)node[below]{$\mathbf{1}_{2}$};
\draw (0,0) -- (1,-1)node[below]{$\mathbf{1}_{1}$};
\draw (0,0) -- (0,1)node[above]{$\mathbf{1}_{2}$};
\end{tikzpicture}
+ 
&\begin{tikzpicture}[baseline={([yshift=-.5ex]current bounding box.center)},scale=\scale, every node/.style={scale=\scale}]
\draw (0,0) -- (-1,-1)node[below]{$\mathbf{1}_{2}$};
\draw (0,0) -- (1,-1)node[below]{$\mathbf{1}_{2}$};
\draw (0,0) -- (0,1)node[above]{$\mathbf{1}_{1}$};
\end{tikzpicture}
+ 
&\begin{tikzpicture}[baseline={([yshift=-.5ex]current bounding box.center)},scale=\scale, every node/.style={scale=\scale}]
\draw (0,0) -- (-1,-1)node[below]{$\tau$};
\draw (0,0) -- (1,-1)node[below]{$\tau$};
\draw (0,0) -- (0,1)node[above]{$\mathbf{1}_{1}$};
\end{tikzpicture}
- \nonumber \\
\begin{tikzpicture}[baseline={([yshift=-.5ex]current bounding box.center)},scale=\scale, every node/.style={scale=\scale}]
\draw (0,0) -- (-1,-1)node[below]{$\tau$};
\draw (0,0) -- (1,-1)node[below]{$\tau$};
\draw (0,0) -- (0,1)node[above]{$\mathbf{1}_{2}$};
\end{tikzpicture}
+ 
&\begin{tikzpicture}[baseline={([yshift=-.5ex]current bounding box.center)},scale=\scale, every node/.style={scale=\scale}]
\draw (0,0) -- (-1,-1)node[below]{$\mathbf{1}_{1}$};
\draw (0,0) -- (1,-1)node[below]{$\tau$};
\draw (0,0) -- (0,1)node[above]{$\tau$};
\end{tikzpicture}
 -
&\begin{tikzpicture}[baseline={([yshift=-.5ex]current bounding box.center)},scale=\scale, every node/.style={scale=\scale}]
\draw (0,0) -- (-1,-1)node[below]{$\mathbf{1}_{2}$};
\draw (0,0) -- (1,-1)node[below]{$\tau$};
\draw (0,0) -- (0,1)node[above]{$\tau$};
\end{tikzpicture}
+ 
&\begin{tikzpicture}[baseline={([yshift=-.5ex]current bounding box.center)},scale=\scale, every node/.style={scale=\scale}]
\draw (0,0) -- (-1,-1)node[below]{$\tau$};
\draw (0,0) -- (1,-1)node[below]{$\mathbf{1}_{1}$};
\draw (0,0) -- (0,1)node[above]{$\tau$};
\end{tikzpicture}
-
&\begin{tikzpicture}[baseline={([yshift=-.5ex]current bounding box.center)},scale=\scale, every node/.style={scale=\scale}]
\draw (0,0) -- (-1,-1)node[below]{$\tau$};
\draw (0,0) -- (1,-1)node[below]{$\mathbf{1}_{2}$};
\draw (0,0) -- (0,1)node[above]{$\tau$};
\end{tikzpicture}
- \\
\sqrt{2}\golden^{-\frac{3}{4}}
&\begin{tikzpicture}[baseline={([yshift=-.5ex]current bounding box.center)},scale=\scale, every node/.style={scale=\scale}]
\draw (0,0) -- (-1,-1)node[below]{$\tau$};
\draw (0,0) -- (1,-1)node[below]{$\tau$};
\draw (0,0) -- (0,1)node[above]{$\tau$};
\end{tikzpicture}
&&& \nonumber
\end{align}

\begin{align}
\label{equ:fib_delta}
&\begin{tikzpicture}[baseline={([yshift=-.5ex]current bounding box.center)},scale=\scale, every node/.style={scale=\scale}]
\draw (0,0) -- (-1,1)node[above]{$\mathbf{1}_{1}$};
\draw (0,0) -- (1,1)node[above]{$\mathbf{1}_{1}$};
\draw (0,0) -- (0,-1)node[below]{$\mathbf{1}_{1}$};
\end{tikzpicture}
- \frac{1}{2\golden}
&\begin{tikzpicture}[baseline={([yshift=-.5ex]current bounding box.center)},scale=\scale, every node/.style={scale=\scale}]
\draw (0,0) -- (-1,1)node[above]{$\mathbf{1}_{1}$};
\draw (0,0) -- (1,1)node[above]{$\mathbf{1}_{1}$};
\draw (0,0) -- (0,-1)node[below]{$\mathbf{1}_{2}$};
\end{tikzpicture}
+ \frac{1}{2\golden}
&\begin{tikzpicture}[baseline={([yshift=-.5ex]current bounding box.center)},scale=\scale, every node/.style={scale=\scale}]
\draw (0,0) -- (-1,1)node[above]{$\mathbf{1}_{1}$};
\draw (0,0) -- (1,1)node[above]{$\mathbf{1}_{2}$};
\draw (0,0) -- (0,-1)node[below]{$\mathbf{1}_{2}$};
\end{tikzpicture}
+ \frac{1}{2\golden}
&\begin{tikzpicture}[baseline={([yshift=-.5ex]current bounding box.center)},scale=\scale, every node/.style={scale=\scale}]
\draw (0,0) -- (-1,1)node[above]{$\mathbf{1}_{2}$};
\draw (0,0) -- (1,1)node[above]{$\mathbf{1}_{1}$};
\draw (0,0) -- (0,-1)node[below]{$\mathbf{1}_{2}$};
\end{tikzpicture}
+\nonumber\\
\frac{\sqrt{5}}{2\golden}
&\begin{tikzpicture}[baseline={([yshift=-.5ex]current bounding box.center)},scale=\scale, every node/.style={scale=\scale}]
\draw (0,0) -- (-1,1)node[above]{$\mathbf{1}_{2}$};
\draw (0,0) -- (1,1)node[above]{$\mathbf{1}_{2}$};
\draw (0,0) -- (0,-1)node[below]{$\mathbf{1}_{2}$};
\end{tikzpicture}
+ 
V
&\begin{tikzpicture}[baseline={([yshift=-.5ex]current bounding box.center)},scale=\scale, every node/.style={scale=\scale}]
\draw (0,0) -- (-1,1)node[above]{$\tau$};
\draw (0,0) -- (1,1)node[above]{$\tau$};
\draw (0,0) -- (0,-1)node[below]{$\mathbf{1}_{2}$};
\end{tikzpicture}
+ \frac{1}{2\golden}
&\begin{tikzpicture}[baseline={([yshift=-.5ex]current bounding box.center)},scale=\scale, every node/.style={scale=\scale}]
\draw (0,0) -- (-1,1)node[above]{$\mathbf{1}_{1}$};
\draw (0,0) -- (1,1)node[above]{$\tau$};
\draw (0,0) -- (0,-1)node[below]{$\tau$};
\end{tikzpicture}
+ \frac{\sqrt{5}}{2\golden}
&\begin{tikzpicture}[baseline={([yshift=-.5ex]current bounding box.center)},scale=\scale, every node/.style={scale=\scale}]
\draw (0,0) -- (-1,1)node[above]{$\mathbf{1}_{2}$};
\draw (0,0) -- (1,1)node[above]{$\tau$};
\draw (0,0) -- (0,-1)node[below]{$\tau$};
\end{tikzpicture}
+  \\
 \frac{1}{2\golden}
&\begin{tikzpicture}[baseline={([yshift=-.5ex]current bounding box.center)},scale=\scale, every node/.style={scale=\scale}]
\draw (0,0) -- (-1,1)node[above]{$\tau$};
\draw (0,0) -- (1,1)node[above]{$\mathbf{1}_{1}$};
\draw (0,0) -- (0,-1)node[below]{$\tau$};
\end{tikzpicture}
+
\frac{\sqrt{5}}{2\golden}
&\begin{tikzpicture}[baseline={([yshift=-.5ex]current bounding box.center)},scale=\scale, every node/.style={scale=\scale}]
\draw (0,0) -- (-1,1)node[above]{$\tau$};
\draw (0,0) -- (1,1)node[above]{$\mathbf{1}_{2}$};
\draw (0,0) -- (0,-1)node[below]{$\tau$};
\end{tikzpicture}
-\Un
&\begin{tikzpicture}[baseline={([yshift=-.5ex]current bounding box.center)},scale=\scale, every node/.style={scale=\scale}]
\draw (0,0) -- (-1,1)node[above]{$\tau$};
\draw (0,0) -- (1,1)node[above]{$\tau$};
\draw (0,0) -- (0,-1)node[below]{$\tau$};
\end{tikzpicture}
& \nonumber
\end{align}

\begin{align}
\label{equ:fib_eta}
\begin{tikzpicture}[baseline={([yshift=-.5ex]current bounding box.center)},scale=\scale, every node/.style={scale=\scale}]
\draw (0,0) node[below] {$\mathbf{1}$} -- (0,2) node[above]{$\mathbf{1}_{1}$};
\end{tikzpicture}
\end{align}

\begin{align}
\label{equ:fib_epsilon}
\begin{tikzpicture}[baseline={([yshift=-.5ex]current bounding box.center)},scale=\scale, every node/.style={scale=\scale}]
\draw (0,0) node[below] {$\mathbf{1}_{1}$} -- (0,2) node[above]{$\mathbf{1}$};
\end{tikzpicture}
\ +\  
\begin{tikzpicture}[baseline={([yshift=-.5ex]current bounding box.center)},scale=\scale, every node/.style={scale=\scale}]
\draw (0,0) node[below] {$\mathbf{1}_{2}$} -- (0,2) node[above]{$\mathbf{1}$};
\end{tikzpicture}
\end{align}

\begin{align}
\label{equ:fib_S}
\begin{tikzpicture}[baseline={([yshift=-.5ex]current bounding box.center)},scale=\scale, every node/.style={scale=\scale}]
\draw (0,0) node[below] {$\mathbf{1}_{1}$} -- (0,2) node[above]{$\mathbf{1}_{1}$};
\end{tikzpicture}
\ + \ 
\begin{tikzpicture}[baseline={([yshift=-.5ex]current bounding box.center)},scale=\scale, every node/.style={scale=\scale}]
\draw (0,0) node[below] {$\mathbf{1}_{2}$} -- (0,2) node[above]{$\mathbf{1}_{2}$};
\end{tikzpicture}
\ -\  \left(\frac{\golden^{-1}}{2} + \frac{5^{\frac{1}{4}}\golden^{\frac{1}{2}}}{2}i\right)
\begin{tikzpicture}[baseline={([yshift=-.5ex]current bounding box.center)},scale=\scale, every node/.style={scale=\scale}]
\draw (0,0) node[below] {$\tau$} -- (0,2) node[above]{$\tau$};
\end{tikzpicture}
\end{align}

$2+\tau$ has four equivalent classes of irreducible modules. The trivial module is given by $M_0 = (\unit, \epsilon)$. There are two module structures on $\tau$ which we denote by $M_i = (\tau, r_i)$, $i=1,2$. There is also a module on $\unit + \tau$ denoted by $M_3 = (\unit + \tau, r_3)$. The action $r_i$ giving the module structure for $i=1,2,3$ is presented, respectively, in Equations \ref{equ:fib_mod1}, \ref{equ:fib_mod2}, \ref{equ:fib_mod3}. With the data of $2+\tau$ and its irreducible modules, it is straightforward to check (in Mathematica) that the category of $(2 + \tau)$-modules is isomorphic to $\DFib$.

\begin{align}
\label{equ:fib_mod1}
\begin{tikzpicture}[baseline={([yshift=-.5ex]current bounding box.center)},scale=\scale, every node/.style={scale=\scale}]
\draw (0,0) -- (-1,-1)node[below]{$\mathbf{1}_{1}$};
\draw (0,0) -- (1,-1)node[below]{$\tau$};
\draw (0,0) -- (0,1)node[above]{$\tau$};
\end{tikzpicture}
\ +\  
\begin{tikzpicture}[baseline={([yshift=-.5ex]current bounding box.center)},scale=\scale, every node/.style={scale=\scale}]
\draw (0,0) -- (-1,-1)node[below]{$\mathbf{1}_{2}$};
\draw (0,0) -- (1,-1)node[below]{$\tau$};
\draw (0,0) -- (0,1)node[above]{$\tau$};
\end{tikzpicture}
\end{align}

\begin{align}
\label{equ:fib_mod2}
\begin{tikzpicture}[baseline={([yshift=-.5ex]current bounding box.center)},scale=\scale, every node/.style={scale=\scale}]
\draw (0,0) -- (-1,-1)node[below]{$\mathbf{1}_{1}$};
\draw (0,0) -- (1,-1)node[below]{$\tau$};
\draw (0,0) -- (0,1)node[above]{$\tau$};
\end{tikzpicture}
\ -\ 
\begin{tikzpicture}[baseline={([yshift=-.5ex]current bounding box.center)},scale=\scale, every node/.style={scale=\scale}]
\draw (0,0) -- (-1,-1)node[below]{$\mathbf{1}_{2}$};
\draw (0,0) -- (1,-1)node[below]{$\tau$};
\draw (0,0) -- (0,1)node[above]{$\tau$};
\end{tikzpicture}
\ +\  \sqrt{2}\golden^{\frac{1}{4}}
\begin{tikzpicture}[baseline={([yshift=-.5ex]current bounding box.center)},scale=\scale, every node/.style={scale=\scale}]
\draw (0,0) -- (-1,-1)node[below]{$\tau$};
\draw (0,0) -- (1,-1)node[below]{$\tau$};
\draw (0,0) -- (0,1)node[above]{$\tau$};
\end{tikzpicture}
\end{align}

\begin{align}
\label{equ:fib_mod3}
\begin{tikzpicture}[baseline={([yshift=-.5ex]current bounding box.center)},scale=\scale, every node/.style={scale=\scale}]
\draw (0,0) -- (-1,-1)node[below]{$\mathbf{1}_{1}$};
\draw (0,0) -- (1,-1)node[below]{$\mathbf{1}$};
\draw (0,0) -- (0,1)node[above]{$\mathbf{1}$};
\end{tikzpicture}
-
&\begin{tikzpicture}[baseline={([yshift=-.5ex]current bounding box.center)},scale=\scale, every node/.style={scale=\scale}]
\draw (0,0) -- (-1,-1)node[below]{$\mathbf{1}_{2}$};
\draw (0,0) -- (1,-1)node[below]{$\mathbf{1}$};
\draw (0,0) -- (0,1)node[above]{$\mathbf{1}$};
\end{tikzpicture}
+ 
&\begin{tikzpicture}[baseline={([yshift=-.5ex]current bounding box.center)},scale=\scale, every node/.style={scale=\scale}]
\draw (0,0) -- (-1,-1)node[below]{$\tau$};
\draw (0,0) -- (1,-1)node[below]{$\tau$};
\draw (0,0) -- (0,1)node[above]{$\mathbf{1}$};
\end{tikzpicture}
+ 
&\begin{tikzpicture}[baseline={([yshift=-.5ex]current bounding box.center)},scale=\scale, every node/.style={scale=\scale}]
\draw (0,0) -- (-1,-1)node[below]{$\mathbf{1}_{1}$};
\draw (0,0) -- (1,-1)node[below]{$\tau$};
\draw (0,0) -- (0,1)node[above]{$\tau$};
\end{tikzpicture}
- \nonumber\\
\begin{tikzpicture}[baseline={([yshift=-.5ex]current bounding box.center)},scale=\scale, every node/.style={scale=\scale}]
\draw (0,0) -- (-1,-1)node[below]{$\mathbf{1}_{2}$};
\draw (0,0) -- (1,-1)node[below]{$\tau$};
\draw (0,0) -- (0,1)node[above]{$\tau$};
\end{tikzpicture}
+ 2
&\begin{tikzpicture}[baseline={([yshift=-.5ex]current bounding box.center)},scale=\scale, every node/.style={scale=\scale}]
\draw (0,0) -- (-1,-1)node[below]{$\tau$};
\draw (0,0) -- (1,-1)node[below]{$\mathbf{1}$};
\draw (0,0) -- (0,1)node[above]{$\tau$};
\end{tikzpicture}
 -&\sqrt[4]{4\sqrt{5}-8}
&\begin{tikzpicture}[baseline={([yshift=-.5ex]current bounding box.center)},scale=\scale, every node/.style={scale=\scale}]
\draw (0,0) -- (-1,-1)node[below]{$\tau$};
\draw (0,0) -- (1,-1)node[below]{$\tau$};
\draw (0,0) -- (0,1)node[above]{$\tau$};
\end{tikzpicture}
\end{align}

\subsection{Fusion rules of $(SO(8)_1)_{S_3}^{\times,S_3}$}\label{8.3}
Here, we list the fusion rules of example \hyperref[5.2.3]{5.2.3} as provided in \cite[Appendix B]{cui2016gauging} in our new notation; it should be noted the two theories in \cite[Appendix B]{cui2016gauging} are the same. We will use $\pm$ and $\mp$ to shorten the list. The only rule is that if a choice is made for the first (second) superscript then the corresponding choice must be made throughout the equation for the first (second) superscript; e.g. in $X^{\pm \pm} \otimes \ldots=X^{\pm \mp} \oplus \ldots$, we can choose $X^{+ -} \otimes \ldots=X^{+ +} \oplus \ldots$.
\begin{itemize}
\item $B \otimes B = A $

\item $B \otimes C = C $

\item $B \otimes Y_\pm = Y_\mp $

\item $B \otimes X = X $

\item $B \otimes {}_{\alpha}X = {}_{\alpha}X $

\item $B \otimes {}_{\alpha^*}X = {}_{\alpha^*}X $

\item $B \otimes X^{\pm\pm} = X^{\pm \mp} $

\item $C \otimes C = A  \oplus B \oplus C$

\item $C \otimes Y_\pm = Y_+  \oplus Y_-$

\item $C \otimes X = {}_{\alpha}X  \oplus {}_{\alpha^*}X$

\item $C \otimes {}_{\alpha}X = X  \oplus {}_{\alpha^*}X$

\item $C \otimes {}_{\alpha^*}X = X  \oplus {}_{\alpha}X$

\item $C \otimes X^{\pm \pm} = X^{\pm \pm}  \oplus X^{\pm \mp}$

\item $Y_\pm \otimes Y_\pm = A  \oplus C \oplus Y_+ \oplus Y_-$

\item $Y_+ \otimes Y_- = B  \oplus C \oplus Y_+ \oplus Y_-$

\item $Y_\pm \otimes Z = X  \oplus {}_{\alpha}X \oplus {}_{\alpha^*}X, \forall Z \in \{X,{}_{\alpha}X,{}_{\alpha^*}X\}$,

\item $X \otimes X = A  \oplus B \oplus Y_+ \oplus Y_- \oplus X \oplus {}_{\alpha^*}X$

\item $X \otimes {}_{\alpha}X = C  \oplus Y_+ \oplus Y_- \oplus {}_{\alpha}X \oplus {}_{\alpha^*}X$

\item $X \otimes {}_{\alpha^*}X = C  \oplus Y_+ \oplus Y_- \oplus X \oplus {}_{\alpha}X$

\item $Z \otimes X^{\pm \pm} = \oplus_{i,j\in\{\pm1\}} X^{ij}, \ \forall Z \in \{X,{}_{\alpha}X,{}_{\alpha^*}X\}$

\item ${}_{\alpha}X \otimes {}_{\alpha}X = A  \oplus B \oplus Y_+ \oplus Y_- \oplus X \oplus {}_{\alpha}X$

\item ${}_{\alpha}X \otimes {}_{\alpha^*}X = C  \oplus Y_+ \oplus Y_- \oplus X \oplus {}_{\alpha^*}X$

\item ${}_{\alpha^*}X \otimes {}_{\alpha^*}X = A  \oplus B \oplus Y_+ \oplus Y_- \oplus {}_{\alpha}X \oplus {}_{\alpha^*}X$

\item $X^{\pm \pm} \otimes X^{\mp \pm} = Y_+  \oplus Y_- \oplus X \oplus {}_{\alpha}X \oplus {}_{\alpha^*}X$, exceptionally in this equation the \textit{second} superscripts can be chosen independently

\item $Y_+ \otimes X^{++} = X^{++}  \oplus X^{-+} \oplus X^{--}$

\item $Y_+ \otimes X^{+-} = X^{+-}  \oplus X^{-+} \oplus X^{--}$

\item $Y_+ \otimes X^{-+} = X^{++}  \oplus X^{+-} \oplus X^{--}$

\item $Y_+ \otimes X^{--} = X^{++}  \oplus X^{+-} \oplus X^{-+}$

\item $Y_- \otimes X^{++} = X^{+-}  \oplus X^{-+} \oplus X^{--}$

\item $Y_- \otimes X^{+-} = X^{++}  \oplus X^{-+} \oplus X^{--}$

\item $Y_- \otimes X^{-+} = X^{++}  \oplus X^{+-} \oplus X^{-+}$

\item $Y_- \otimes X^{--} = X^{++}  \oplus X^{+-} \oplus X^{--}$

\item $X^{\pm \pm} \otimes X^{\pm \pm} = A  \oplus C \oplus Y_\pm \oplus X \oplus {}_{\alpha}X \oplus {}_{\alpha^*}X$, where the same choice is for the first subscript and first superscript

\item $X^{++} \otimes X^{+-} = B  \oplus C \oplus Y_- \oplus X \oplus {}_{\alpha}X \oplus {}_{\alpha^*}X$

\item $X^{-+} \otimes X^{--} = B  \oplus C \oplus Y_+ \oplus X \oplus {}_{\alpha}X \oplus {}_{\alpha^*}X$
\end{itemize}

\bibliographystyle{apa}
\bibliography{main}
\address{\textsuperscript{1\label{1.}}Stanford Institute for Theoretical Physics, Stanford University,
Stanford, CA 94305, U.S.A.}
\address{\textsuperscript{2\label{2.}}Dept of Mathematics, Virginia Polytechnic Institute and State University,
Blacksburg, VA 24061, U.S.A.}
\address{\textsuperscript{3\label{3.}}Dept of Mathematics, University of California,
Santa Barbara, CA 93106-6105, U.S.A.}
\address{\textsuperscript{4\label{4.}}Microsoft Station Q and Dept of Mathematics, University of California,
Santa Barbara, CA 93106-6105, U.S.A.} 

\end{document}